\definecolor{darkgreen}{rgb}{0,0.45,0}
\DeclareMathAlphabet{\mathbf}{OT1}{cmr}{b}{n}
\def\matrixobject@{%
  \edef \next@{={\DirectionfromtheDirection@ }}%
  \expandafter \toks@ \next@ \plainxy@
  \let\xy@@ix@=\xyq@@toksix@
  \xyFN@ \OBJECT@}
\let\xy@entry@@norm=\entry@@norm
\def\entry@@norm@patched{%
  \let\object@=\matrixobject@
  \xy@entry@@norm }
\newcommand{\twocong}[2][0.5]{\ar@{}[#2] \save ?(#1)*{\cong}\restore}
\newcommand{\twoeq}[2][0.5]{\ar@{}[#2] \save ?(#1)*{=}\restore}
\newcommand{\rtwocell}[3][0.5]{\ar@{}[#2] \ar@{=>}?(#1)+/l 0.2cm/;?(#1)+/r 0.2cm/^{#3}}
\newcommand{\ltwocell}[3][0.5]{\ar@{}[#2] \ar@{=>}?(#1)+/r 0.2cm/;?(#1)+/l 0.2cm/^{#3}}
\newcommand{\ltwocello}[3][0.5]{\ar@{}[#2] \ar@{=>}?(#1)+/r 0.2cm/;?(#1)+/l 0.2cm/_{#3}}
\newcommand{\dtwocell}[3][0.5]{\ar@{}[#2] \ar@{=>}?(#1)+/u  0.2cm/;?(#1)+/d 0.2cm/^{#3}}
\newcommand{\dltwocell}[3][0.5]{\ar@{}[#2] \ar@{=>}?(#1)+/ur  0.2cm/;?(#1)+/dl 0.2cm/^{#3}}
\newcommand{\drtwocell}[3][0.5]{\ar@{}[#2] \ar@{=>}?(#1)+/ul  0.2cm/;?(#1)+/dr 0.2cm/^{#3}}
\newcommand{\dthreecell}[3][0.5]{\ar@{}[#2] \ar@3{->}?(#1)+/u  0.2cm/;?(#1)+/d 0.2cm/^{#3}}
\newcommand{\utwocell}[3][0.5]{\ar@{}[#2] \ar@{=>}?(#1)+/d 0.2cm/;?(#1)+/u 0.2cm/_{#3}}
\newcommand{\dtwocelltarg}[3][0.5]{\ar@{}#2 \ar@{=>}?(#1)+/u  0.2cm/;?(#1)+/d 0.2cm/^{#3}}
\newcommand{\utwocelltarg}[3][0.5]{\ar@{}#2 \ar@{=>}?(#1)+/d  0.2cm/;?(#1)+/u 0.2cm/_{#3}}
\newcommand{\sh}[2]{**{!/#1 -#2/}}
\DeclareMathOperator{\ob}{ob}
\newcommand{\cat}[1]{\mathbf{#1}}
\newcommand{\op}{\mathrm{op}}
\newcommand{\id}{\mathrm{id}}
\newcommand{\thg}{{\mathord{\text{--}}}}
\newcommand{\Lan}{\mathrm{Lan}}
\newcommand{\dbr}[1]{\left\llbracket{#1}\right\rrbracket}
\newcommand{\spn}[1]{{\langle{#1}\rangle}}
\newcommand{\defeq}{\mathrel{\mathop:}=}
\newcommand{\cd}[2][]{\vcenter{\hbox{\xymatrix#1{#2}}}}
\renewcommand{\phi}{\varphi}
\newcommand{\A}{{\mathcal A}}
\newcommand{\B}{{\mathcal B}}
\newcommand{\C}{{\mathcal C}}
\newcommand{\D}{{\mathcal D}}
\newcommand{\E}{{\mathcal E}}
\newcommand{\F}{{\mathcal F}}
\newcommand{\G}{{\mathcal G}}
\newcommand{\M}{{\mathcal M}}
\newcommand{\N}{{\mathcal N}}
\renewcommand{\O}{{\mathcal O}}
\renewcommand{\P}{{\mathcal P}}
\newcommand{\Ss}{{\mathcal S}}
\newcommand{\T}{{\mathcal T}}
\newcommand{\V}{{\mathcal V}}
\newcommand{\W}{{\mathcal W}}
\newcommand{\Z}{{\mathcal Z}}
\newcommand{\xtor}[1]{\cdl[@1]{{} \ar[r]|-{\object@{|}}^{#1} & {}}}
\newcommand{\tor}{\ensuremath{\relbar\joinrel\mapstochar\joinrel\rightarrow}}
\def\hookleftarrowfill@{\arrowfill@\leftarrow\relbar{\relbar\joinrel\rhook}}
\def\twoheadleftarrowfill@{\arrowfill@\twoheadleftarrow\relbar\relbar}
\def\leftbararrowfill@{\arrowdoublefill@{\leftarrow\mkern-5mu}\relbar\mapstochar\relbar\relbar}
\def\Leftbararrowfill@{\arrowdoublefill@{\Leftarrow\mkern-2mu}\Relbar\Mapstochar\Relbar\Relbar}
\def\leftringarrowfill@{\arrowdoublefill@{\leftarrow\mkern-3mu}\relbar{\mkern-3mu\circ\mkern-2mu}\relbar\relbar}
\def\lefttriarrowfill@{\arrowfill@{\mathrel\triangleleft\mkern0.5mu\joinrel\relbar}\relbar\relbar}
\def\Lefttriarrowfill@{\arrowfill@{\mathrel\triangleleft\mkern1mu\joinrel\Relbar}\Relbar\Relbar}
\def\hookrightarrowfill@{\arrowfill@{\lhook\joinrel\relbar}\relbar\rightarrow}
\def\twoheadrightarrowfill@{\arrowfill@\relbar\relbar\twoheadrightarrow}
\def\rightbararrowfill@{\arrowdoublefill@{\relbar\mkern-0.5mu}\relbar\mapstochar\relbar\rightarrow}
\def\Rightbararrowfill@{\arrowdoublefill@{\Relbar\mkern-2mu}\Relbar\Mapstochar\Relbar\Rightarrow}
\def\rightringarrowfill@{\arrowdoublefill@\relbar\relbar{\mkern-2mu\circ\mkern-3mu}\relbar{\mkern-3mu\rightarrow}}
\def\righttriarrowfill@{\arrowfill@\relbar\relbar{\relbar\joinrel\mkern0.5mu\mathrel\triangleright}}
\def\Righttriarrowfill@{\arrowfill@\Relbar\Relbar{\Relbar\joinrel\mkern1mu\mathrel\triangleright}}
\def\leftrightarrowfill@{\arrowfill@\leftarrow\relbar\rightarrow}
\def\mapstofill@{\arrowfill@{\mapstochar\relbar}\relbar\rightarrow}
\renewcommand*\xleftarrow[2][]{\ext@arrow 20{20}0\leftarrowfill@{#1}{#2}}
\providecommand*\xLeftarrow[2][]{\ext@arrow 60{22}0{\Leftarrowfill@}{#1}{#2}}
\providecommand*\xhookleftarrow[2][]{\ext@arrow 10{20}0\hookleftarrowfill@{#1}{#2}}
\providecommand*\xtwoheadleftarrow[2][]{\ext@arrow 60{20}0\twoheadleftarrowfill@{#1}{#2}}
\providecommand*\xleftbararrow[2][]{\ext@arrow 10{22}0\leftbararrowfill@{#1}{#2}}
\providecommand*\xLeftbararrow[2][]{\ext@arrow 50{24}0\Leftbararrowfill@{#1}{#2}}
\providecommand*\xleftringarrow[2][]{\ext@arrow 10{26}0\leftringarrowfill@{#1}{#2}}
\providecommand*\xlefttriarrow[2][]{\ext@arrow 80{24}0\lefttriarrowfill@{#1}{#2}}
\providecommand*\xLefttriarrow[2][]{\ext@arrow 80{24}0\Lefttriarrowfill@{#1}{#2}}
\renewcommand*\xrightarrow[2][]{\ext@arrow 01{20}0\rightarrowfill@{#1}{#2}}
\providecommand*\xRightarrow[2][]{\ext@arrow 04{22}0{\Rightarrowfill@}{#1}{#2}}
\providecommand*\xhookrightarrow[2][]{\ext@arrow 00{20}0\hookrightarrowfill@{#1}{#2}}
\providecommand*\xtwoheadrightarrow[2][]{\ext@arrow 03{20}0\twoheadrightarrowfill@{#1}{#2}}
\providecommand*\xrightbararrow[2][]{\ext@arrow 01{22}0\rightbararrowfill@{#1}{#2}}
\providecommand*\xRightbararrow[2][]{\ext@arrow 04{24}0\Rightbararrowfill@{#1}{#2}}
\providecommand*\xrightringarrow[2][]{\ext@arrow 01{26}0\rightringarrowfill@{#1}{#2}}
\providecommand*\xrighttriarrow[2][]{\ext@arrow 07{24}0\righttriarrowfill@{#1}{#2}}
\providecommand*\xRighttriarrow[2][]{\ext@arrow 07{24}0\Righttriarrowfill@{#1}{#2}}
\providecommand*\xmapsto[2][]{\ext@arrow 01{20}0\mapstofill@{#1}{#2}}
\providecommand*\xleftrightarrow[2][]{\ext@arrow 10{22}0\leftrightarrowfill@{#1}{#2}}
\providecommand*\xLeftrightarrow[2][]{\ext@arrow 10{27}0{\Leftrightarrowfill@}{#1}{#2}}
\numberwithin{equation}{section}
\theoremstyle{plain}
\newtheorem{Thm}{Theorem}
\newtheorem{Prop}[Thm]{Proposition}
\newtheorem{Cor}[Thm]{Corollary}
\newtheorem{Lemma}[Thm]{Lemma}
\theoremstyle{definition}
\newtheorem{Defn}[Thm]{Definition}
\newtheorem{Ex}[Thm]{Example}
\newtheorem{Rk}[Thm]{Remark}
\newtheorem*{Defn*}{Definition}
\DeclareFontFamily{U}{mathb}{\hyphenchar\font45}
\DeclareFontShape{U}{mathb}{m}{n}{
      <5> <6> <7> <8> <9> <10> gen * mathb
      <10.95> mathb10 <12> <14.4> <17.28> <20.74> <24.88> mathb12
      }{}
\DeclareSymbolFont{mathb}{U}{mathb}{m}{n}
\DeclareMathSymbol{\curvearrowleft}{3}{mathb}{"F0}
\DeclareMathSymbol{\curvearrowright}{3}{mathb}{"F1}
\DeclareMathSymbol{\curvearrowleftright}{3}{mathb}{"F2}
\newcommand{\comp}{\circ}
\newcommand{\comm}{\odot}
\newcommand{\I}{I}
\newcommand{\conv}{\ast}
\newcommand{\J}{J}
\newcommand{\fun}{\mathop{\square}}
\newcommand{\Th}{\mathrm{Th}}
\newcommand{\esp}{\Ss p}
\begin{document}
\leftmargini=2em 
\title{Commutativity}
\author{Richard Garner} 
\address{Department of Mathematics, Macquarie University, NSW 2109, Australia} 
\email{richard.garner@mq.edu.au}
\thanks{The first author gratefully acknowledges the support of
  Australian Research Council Discovery Projects DP110102360 and
  DP130101969.}

\author{Ignacio L\'opez Franco} 
\address{Department of Pure Mathematics and Mathematical Statistics,
  University of Cambridge, Cambridge CB3 0WB, UK} 
\email{ill20@cam.ac.uk}
\thanks{The second author gratefully acknowledges the support of a Research Fellowship of
  Gonville and Caius College, Cambridge, and of the Australian Research Council
  Discovery Project DP1094883.}

\subjclass[2010]{Primary: 18D20. Secondary: 18C05, 18D50}

\begin{abstract}
  We describe a general framework for notions of commutativity based
  on enriched category theory. We extend Eilenberg and Kelly's tensor
  product for categories enriched over a symmetric monoidal base to a
  tensor product for categories enriched over a normal duoidal
  category; using this, we re-find notions such as the commutativity
  of a finitary algebraic theory or a strong monad, the commuting
  tensor product of two theories, and the Boardman--Vogt tensor
  product of symmetric operads.
\end{abstract}
\maketitle

\section{Introduction}

This article is a category-theoretic investigation into the notion of
\emph{commutativity}. We first meet commutativity in elementary
algebra: two elements $a,b$ of a monoid $M$ are said to commute if
$ab = ba$, while $M$ itself is called commutative if all its elements
commute pairwise. This immediately yields other notions of
commutativity: for groups (on forgetting the inverses), for rings (on
forgetting the additive structure) and for Lie algebras (on passing to
the universal enveloping algebra). 

Later on, we encounter more sophisticated forms of commutativity not
directly reducible to that for monoids. For example, a pair of
operations $f, g$ of arities $m, n$ in an algebraic theory $\T$ are
said to \emph{commute} if the two $mn$-ary operations
\begin{align}
  & f(g(x_{11}, \dots, x_{1n}), \dots, g(x_{m1}, \dots, x_{mn})) \\ 
  \text{and} \qquad \ 
  & g(f(x_{11}, \dots, x_{m1}), \dots, f(x_{1n}, \dots, x_{mn}))
\end{align}
are equal, while $\T$ itself is \emph{commutative} when all of its
operations commute pairwise; typical commutative theories are those
for join-semilattices, for commutative monoids and for modules over a
commutative ring $R$. An important related notion in this context is
the \emph{commuting tensor product} $\Ss \comm \T$ of theories $\Ss$
and $\T$; this has the property that $\Ss \comm \T$-models in a
category $\E$ correspond with $\Ss$-models in the category of
$\T$-models in $\E$, and also with $\T$-models in the category of
$\Ss$-models in $\E$. There is a corresponding notion of commutativity
for operations in \emph{symmetric operads} in the sense
of~\cite{May1972The-geometry}, and the analogue of the commuting
tensor product in this context is the \emph{Boardman--Vogt} tensor
product of~\cite{Boardman1973Homotopy}.

Yet another kind of generalised commutativity arises in the context of
the \emph{sesquicategories} of~\cite{Street1996Categorical}; these may
be defined succinctly as comprising a category $\C$ together with a
lifting of
$\cat{Hom}_\C \colon \C^\op \times \C \rightarrow \cat{Set}$ through
the forgetful functor $\cat{Cat} \rightarrow \cat{Set}$. To give such
a lifting is to equip $\C$ with $2$-cells that admit vertical
composition and whiskering on each side with $1$-cells, but which
need not satisfy the \emph{interchange} axiom, which requires that for
any pair of $2$-cells in the configuration
\begin{equation}\label{eq:47}
  \cd{
    A \ar@/^1em/[r]^-{f} \ar@/_1em/[r]_-{g} \dtwocell{r}{\alpha} &
    B \ar@/^1em/[r]^-{h} \ar@/_1em/[r]_-{k} \dtwocell{r}{\beta} &
    C
  }
\end{equation}
we should have
$\beta g \circ h\alpha = k\alpha \circ \beta f \colon hf \Rightarrow
kg$.
If we declare the pair $(\alpha, \beta)$ to commute just when they
\emph{do} satisfy interchange, then a sesquicategory will be
commutative, in the sense of all of its composable pairs commuting,
precisely when it is a $2$-category. A related example involves the
\emph{premonoidal categories} of~\cite{Power1997Premonoidal}, which
bear the same relation to (non-strict) monoidal categories as
sesquicategories do to $2$-categories.

The objective of this paper is to describe an abstract framework for
commutativity that encompasses each of the examples given above, and
others besides. As a starting point, we observe that each of our
examples is concerned with a kind of structure---monoids, algebraic
theories, operads, sesquicategories---that can be viewed as a monoid
in a particular monoidal category; an ordinary monoid, is, of course,
a monoid in the cartesian monoidal $\cat{Set}$, while a finitary
algebraic theory can be seen as a monoid in the substitution monoidal
category $[\mathbb F, \cat{Set}]$, where $\mathbb F$ is the category
of functions between finite cardinals. Consequently, the key notion
of our abstract theory will be the definition, for a suitable monoidal
category $(\V, \comp, \I)$ and a monoid $C$ therein, of what it means
for a pair of generalised elements
\begin{equation}\label{eq:1}
\cd[@-0.4em]{
  {A}  \ar[dr]_-{f} & &
  {B} \ar[dl]^-{g} \\ &
  {C}
}  
\end{equation}
of $C$ to commute. As explained in~\cite{Janelidze2009Cover}, other
aspects of the theory flow easily once this definition is made: for
example, $C$ itself is commutative just when the generalised element
$1_C$ commutes with itself, while the commuting tensor product of
monoids $A$ and $B$ is the universal monoid $A \comm B$ in which $A$
and $B$ commute; other notions such as centralizers and centres also
admit expression in this generality.

When $(\V, \comp, \I)$ is a \emph{braided} monoidal category, it is easy to say
when~\eqref{eq:1} should be a commuting cospan---namely, just when the
left-hand diagram in:
\begin{equation}\label{eq:15}
  \cd[@!C@C-1.5em@R-0.7em]{
    & \sh{l}{0.5em} A \comp B \ar[r]^-{f \comp g} &
    \sh{r}{0.5em}C \comp C \ar[dr]^m & & 
        & \sh{l}{0.5em} A \comp B \ar[r]^-{f \comp g} &
    \sh{r}{0.5em} C \comp C \ar[dr]^m \\ 
    A \comp B \ar[ur]^-{1} \ar[dr]_-{c} & & & 
    C &
    A \conv B \ar[ur]^-{\sigma} \ar[dr]_-{\tau} & & & 
    C \\
    &\sh{l}{0.5em} B \comp A \ar[r]_-{g \comp f} &
    \sh{r}{0.5em}C \comp C \ar[ur]_m & & 
    &\sh{l}{0.5em} B \comp A \ar[r]_-{g \comp f} &
    \sh{r}{0.5em}C \comp C \ar[ur]_m
  }
\end{equation}
commutes in $\V$; here $c$ is the braiding of $\V$ and $m$ is the
multiplication of the monoid $C$. With $\V = (\cat{Set}, \times, 1)$,
this recovers the case of classical monoids; but it does not account
for examples---such as finitary algebraic theories---wherein $\V$ is
\emph{not} braided monoidal. The key novelty of our treatment is in
how we extend the basic commutativity notion in~\eqref{eq:1} to cases
such as these. Rather than a braiding, we assume that $\V$ is equipped
with a second monoidal structure whose tensor
$\conv \colon \V \times \V \to \V$, unit $\J \colon 1 \to \V$ and
associated coherences are normal opmonoidal with respect to the first;
this makes $(\V, \conv, \J, \comp, \I)$ into a \emph{normal
  duoidal}~\cite{Batanin2012Centers} or \emph{$2$-fold
  monoidal}~\cite{Balteanu2003Iterated} category. As we recall in
Section~\ref{sec:normality} below, a normal duoidal structure gives
rise to natural families of maps
\begin{equation}
  \sigma \colon A \conv B \rightarrow A \comp B \qquad \text{and} \qquad \tau \colon A \conv B \rightarrow A \comp B
\end{equation}
which in suitable circumstances can serve as a surrogate for a
braiding; in particular, we may generalise the notion of commuting
cospan~\eqref{eq:1} from the braided to the normal duoidal context by
replacing $1$ and $c$ as to the left of~\eqref{eq:15} by $\sigma$ and
$\tau$ as to the right. Note that this is a true generalisation: any
braided monoidal $\V$ bears a canonical normal duoidal structure with
$\conv = \comp$ and $\J = \I$ for which $\sigma$ and $\tau$ reduce
exactly to $1$ and $c$.

This more general framework for commutativity is sufficient to capture
all of the leading examples. For example, the substitution monoidal
category $([\mathbb F, \cat{Set}], \comp, \I)$, wherein monoids are
finitary algebraic theories, becomes normal duoidal when equipped with
the second monoidal structure $(\conv, \J)$ given by \emph{Day
  convolution}~\cite{Day1970Construction} with respect to product; as
we will see in Section~\ref{sec:exampl-algebr-theor}, the resultant
theory of commutativity for finitary algebraic theories is precisely
the classical one outlined above. One of the basic contentions of this
paper is consequently that \emph{normal duoidal categories are an
  appropriate environment for describing a theory of commutativity}.

In fact, this theory becomes more perspicuous if we adopt a broader
perspective. A monoid in a monoidal category $\V$ is equally well a
one-object \emph{$\V$-enriched category}~\cite{Kelly1982Basic}, and
the meaning of our theory of commutativity may be clarified by
generalising it from $\V$-monoids to $\V$-categories; the basic notion
of commuting cospan~\eqref{eq:1} is then replaced by a notion of
\emph{bifunctor} between $\V$-categories---which we now explain.

Consider first the case where $\V = \cat{Set}$; here a $\V$-category
is just an ordinary (locally small) category, and we are familiar with
the fact that a \emph{bifunctor} from $\A, \B$ to $\C$ is simply a
functor $T \colon \A \times \B \rightarrow \C$. An important basic
exercise (see~\cite[\S II.3, Proposition 1]{Mac-Lane1971Categories})
shows that giving the data of a bifunctor is equivalent to giving
families of functors $T(a, \thg) \colon \B \rightarrow \C$ for
$a \in \A$ and $T(\thg, b) \colon \A \rightarrow \C$ for $b \in \B$
that agree on objects---so $T(\thg,b)(a) = T(a, \thg)(b) = T(a,b)$,
say---and which on arrows satisfy the \emph{commutativity} condition
that, for all $f \colon a \rightarrow a'$ in $\A$ and
$g \colon b \rightarrow b'$ in $\B$, the following square should
commute in $\C$:
\begin{equation}\label{eq:16}
  \cd[@-0.3em]{
    {T(a,b)} \ar[r]^-{T(f,b)} \ar[d]_{T(a,g)} &
    {T(a',b)} \ar[d]^{T(a',g)} \\
    {T(a,b')} \ar[r]_-{T(f,b')} &
    {T(a',b')\rlap{ .}}
  }
\end{equation}

More generally, for any braided monoidal $\V$, there is a
well-established notion of $\V$-bifunctor; as explained
in~\cite[Chapter~III, \S4]{Eilenberg1966Closed}, it may again be
described in two ways. On the one hand, the existence of the braiding
means that there is an easily-defined notion of \emph{tensor product} for
$\V$-categories~\cite[\S 1.4]{Kelly1982Basic}, and a $\V$-bifunctor
from $\A, \B$ to $\C$ may now be defined simply as a $\V$-functor
$T \colon \A \otimes \B \rightarrow \C$. On the other hand, we may
once again specify a bifunctor in terms of families of $\V$-functors
$T(a, \thg) \colon \B \rightarrow \C$ and
$T(\thg, b) \colon \A \rightarrow \C$ which match up on objects, and
which satisfy a commutativity condition like~\eqref{eq:16}, but now
expressed in terms of commuting diagrams of hom-objects in $\V$:
\begin{equation}\label{eq:17}
  \cd[@!C@C-9.7em@R-0.5em]{
    & \sh{l}{2.5em}\A(a, a') \comp \B(b, b') \ar[rr]^-{T(\thg, b') \comp T(a, \thg)} & &
    \sh{r}{3em}\C(Tab', Ta'b') \comp \C(Tab, Tab') \ar[dr]^m \\ 
    \A(a, a') \comp \B(b, b') \ar[ur]^-{1} \ar[dr]_-{c} & & & &
    \C(Tab, Ta'b') \\
    &\sh{l}{2.5em}\B(b, b') \comp \A(a, a') \ar[rr]_-{T(a', \thg) \comp T(\thg, b)} & &
    \sh{r}{3em}\C(Ta'b, Ta'b') \comp \C(Tab, Ta'b)\rlap. \ar[ur]_m
  }
\end{equation}
Comparing~\eqref{eq:17} to the left-hand diagram of~\eqref{eq:15},
we see that the latter is simply the one-object case of the former, so
that, for braided monoidal categories, the commuting cospans
of~\eqref{eq:1} are \emph{the same} as $\V$-bifunctors between
one-object $\V$-categories. 

Given this, there is now an obvious way of generalising the notion of
$\V$-bifunctor from the braided monoidal to the normal duoidal
context: we define a bifunctor from $\A, \B$ to $\C$ in terms of
families of one-variable $\V$-functors $T(\thg, b)$ and $T(a, \thg)$
which agree on objects, and which satisfy the commutativity condition
obtained from~\eqref{eq:17} by replacing $1$ and $c$ therein with
$\sigma$ and $\tau$---just as we did in~\eqref{eq:15}. Under
reasonable hypotheses on $\V$, such $\V$-bifunctors
$\A, \B \rightarrow \C$ can be represented by $\V$-functors
$\A \comm \B \rightarrow \C$---and this tensor producd $\A \comm \B$
of $\V$-categories is now the many-object version of the
\emph{commuting tensor product} discussed above. Under further
reasonable hypotheses on $\V$, these commuting tensor products can be
made into part of a monoidal biclosed structure on the $2$-category of
small $\V$-categories, generalising the one existing in the braided
monoidal case.

The many-object perspective clarifies not only the basic commutativity
notion and the commuting tensor product, but also various further
aspects of the theory. For example, as we will see in
Section~\ref{sec:comm-tens-prod}, we may exploit the internal homs
$[\thg, \thg]$ of the commuting tensor product to construct
\emph{centralizers} of monoid maps and \emph{centres} of monoids.
Similarly, we will see in Section~\ref{sec:catend_fc-enrich-cat} that in the
case $\V = [\mathbb F, \cat{Set}]$, we may realise the category of
models of a theory $\T$ in a category $\E$ with finite powers as an
internal hom $[\T, \E]$ in $\V\text-\cat{Cat}$; the correspondence
between $\Ss \comm \T$-models in $\E$, $\Ss$-models in $\T$-models
in $\E$, and $\T$-models in $\Ss$-models in $\E$, is then a direct
consequence of the isomorphisms
$[\Ss \comm \T, \E] \cong [\Ss, [\T, \E]] \cong [\T, [\Ss, \E]]$
associated to the symmetric monoidal closed structure on
$\V\text-\cat{Cat}$.

It may also be useful to note what we do \emph{not} do in this paper.
First, we will say nothing about notions of commutativity in the
context of \emph{semi-abelian} categories~\cite{Borceux2004Malcev}.
This is because, as far as we have been able to tell, examples from
this sphere simply do not fit into our framework. The intersection of
our theory and the semi-abelian theory is essentially the content
of~\cite{Janelidze2009Cover}, which shows how to define notions such
as commuting tensor product and commutative object in a category
\emph{given}, as in~\eqref{eq:1}, a suitable commutation relation on
cospans. We exploit some of these results in
Section~\ref{sec:one-object-case} below, but note that this only
relates to the one-object case of our theory; for the many-object case
we must argue from scratch.

A second point we do not touch on, purely for reasons of space, is the
generalisation of our theory from enrichment over monoidal categories
to enrichment over \emph{bicategories}~\cite{Walters1981Sheaves}.
While it may sound esoteric, such a generalisation would allow us, for
example, to exploit the work of~\cite{Fiore2008The-cartesian} in order
to describe not only the Boardman--Vogt tensor product of symmetric
operads but also that of \emph{symmetric
  multicategories}~\cite{Lambek1969Deductive}.

The final point we do not deal with, again for reasons of space, is
the generalisation of our theory from one-dimensional to
two-dimensional enrichment. The basic example is the category of small
$2$-categories, which as well as its commuting tensor product (=
cartesian product) also admits a ``pseudo-commuting'' tensor product
known as the \emph{Gray tensor product}~\cite{Gordon1995Coherence},
together with lax and oplax variants thereof~\cite{Gray1974Formal}.
Likewise, when we generalise from algebraic theories to
\emph{two-dimensional} algebraic
theories~\cite{Blackwell1989Two-dimensional}, we have not just a
commuting tensor product but also pseudo, lax, and oplax variants. We
hope to deal with both this generalisation and the preceding one in
future work.

The remainder of this paper is laid out as follows. We begin in
Section~\ref{sec:duoidal-categories} by gathering together the
necessary background on duoidal categories.
Section~\ref{sec:comm-gener-theory} then introduces the key notions of
our theory of commutativity: the notions of sesquifunctor and
bifunctor for categories enriched over a normal duoidal $\V$, and a
description of the monoidal closed structure this induces on
$\V\text-\cat{Cat}$ for a well-behaved $\V$.
Section~\ref{sec:one-object-case} then goes on to indicate how these
results specialise to the important case of one-object
$\V$-categories. This concludes the abstract theory; the remainder of
the paper is devoted to examples.

Section~\ref{sec:exampl-algebr-theor} considers the case of
\emph{finitary algebraic theories}, showing that our framework
suffices to re-find the classical notions of commutativity described
above; Section~\ref{sec:exampl-symm-oper} then considers
\emph{symmetric operads}, in particular showing how the Boardman--Vogt
tensor product mentioned above falls out of our theory.
Section~\ref{sec:norm-duoid-categ} breaks off from our main
development to describe a process by which arbitrary duoidal
categories can be \emph{normalized} into normal duoidal ones; we then
make use of this construction in giving our final three examples. In
Section~\ref{sec:exampl-comm-monads}, we study \emph{strong monads} on
a monoidal category~\cite{Kock1970Monads}; in
Section~\ref{sec:bistrong-promonads}, we generalise this to the
\emph{Freyd-categories}
of~\cite{Power1997Premonoidal,Levy2003Modelling}; while finally in
Section~\ref{sec:exampl-sesq}, we incorporate the example of
sesquicategories into our general framework.

\section{Background on duoidal categories}
\label{sec:duoidal-categories}

\subsection{Duoidal categories}
\label{sec:duoidal-categories-2}
As explained in the introduction, the ambient setting for our theory
of commutativity is that of a \emph{duoidal category}. These were
introduced in a slightly degenerate form
in~\cite{Balteanu2003Iterated} under the name \emph{$2$-fold monoidal
  categories}; the fully general definition may be found, for example,
in~\cite{Aguiar2010Monoidal} under the name \emph{$2$-monoidal
  category}. The term ``duoidal'' is due to~\cite{Batanin2012Centers}.

\begin{Defn}
  \label{def:1}
  A \emph{duoidal category} is a monoidale (= pseudomonoid) in the
  monoidal $2$-category of monoidal categories, oplax monoidal
  functors and oplax monoidal natural transformations.
\end{Defn}

A duoidal structure on a category $\V$ thus comprises two monoidal
structures $(\comp, \I)$ and $(\conv, \J)$ (whose unit and
associativity constraints we leave unnamed) such that the functors
$\conv \colon \V \times \V \to \V$ and $\J \colon \cat 1 \to \V$ and
the associated coherence transformations are oplax monoidal with
respect to $\comp$. The oplax monoidal constraint data of $\conv$ and
$\J$ comprise a natural family of \emph{interchange maps}
\begin{equation}\label{eq:5}
\xi \colon (X \comp Y) \conv (Z \comp W) \to (X \conv Z) \comp (Y
\conv W)
\end{equation}
together with arrows $ \mu \colon \I \conv \I \to \I$,
$\upsilon \colon \J \to \I$ and $ \gamma \colon \J \to \J \comp \J$
satisfying axioms which, among other things, make $(\I, \upsilon, \mu)$
into a $\conv$-monoid and $(\J, \upsilon, \gamma)$ into a
$\comp$-comonoid. These data and axioms are equally those
required to make $\comp$ and $\I$ and the associated constraints
\emph{lax} monoidal with respect to $(\conv, \J)$, so that a duoidal
category is alternatively a monoidale in the $2$-category
of monoidal categories and lax monoidal
functors.

\begin{Ex}
  \label{ex:1}
  Any braided monoidal category $(\V, \otimes, I)$ can be made into a
  duoidal category by taking $\mathord \comp = \mathord \conv =
  \mathord \otimes$ and
  $\I = \J$, with $\upsilon$ taken to be the identity, $\mu$ and
  $\gamma$ given by unit constraints, and the interchange maps $\xi$
  constructed from associativities and the braiding. Conversely, if
  the duoidal $\V$ has all its constraint maps invertible, then the
  monoidal structures $\comp$ and $\conv$ are isomorphic and braided;
  see~\cite[Remark~5.1]{Joyal1993Braided}
  or~\cite[Proposition~6.11]{Aguiar2010Monoidal}.
\end{Ex}

We will give more examples relevant to our theory from
Section~\ref{sec:exampl-algebr-theor} onwards. In these examples, the
two monoidal structures $\comp$ and $\conv$ are generally thought of
as \emph{composition} and \emph{convolution} respectively; it is
almost always the case that the convolution tensor has associated
internal homs, and will often be the case that it is braided in a
manner compatible with $\comp$. The following definition formalises
these concepts.

\begin{Defn}
  \label{def:6}
Let $\V$ be a duoidal category. We say that:
\begin{enumerate}[(i)]
\item $\V$ is \emph{$\conv$-biclosed} if each functor $(\thg) \conv X$
  and $X \conv (\thg) \colon \V \rightarrow \V$ has a right adjoint,
  written as $[X, \thg]_\ell$ and $[X, \thg]_r$ and called \emph{left}
  and \emph{right} hom, respectively.\vskip0.25\baselineskip
\item $\V$ is \emph{$\conv$-braided} if the $\conv$-monoidal structure
  is given a braiding $c$, with respect to which the $\comp$-monoidal
  structure maps are braided monoidal; we may say
  \emph{$\conv$-symmetric} if the given braiding is in fact a symmetry.
\end{enumerate}
\end{Defn}
Spelling out (ii), the compatibility of the $\conv$-braiding and the
$\comp$-monoidal structure amounts to the requirement that $(\I, \upsilon, \mu)$
be a commutative $\conv$-monoid in $\V$, and that each diagram of the
following form should commute:
\begin{equation}\label{eq:2}
  \cd{
    {(X \comp Y) \conv (Z \comp W)} \ar[r]^-{\xi} \ar[d]_{c} &
    {(X \conv Z) \comp (Y
      \conv W)} \ar[d]^{c \,\comp\, c} \\
    {(Z \comp W) \conv (X \comp Y)} \ar[r]^-{\xi} &
    {(Z \conv X) \comp (W \conv Y)\rlap{ .}}
  }
\end{equation}
For instance, if $(\V, \otimes)$ is a braided monoidal category,
then the associated duoidal category $(\V, \otimes, \otimes)$ is
$\conv$-braided if and only if the braiding on $\otimes$ is a symmetry;
this is proven in~\cite[Proposition~6.13]{Aguiar2010Monoidal}.

\subsection{Normality}
\label{sec:normality}
It turns out that not every duoidal category will be appropriate for
our theory; we must assume essentially the same degeneracy with
respect to units as appears in~\cite{Balteanu2003Iterated}. Recall
that an opmonoidal functor $F$ is called \emph{normal} if the unit
comparison map $FI \to I$ is invertible.

\begin{Defn}
  \label{def:2}
  A duoidal category $\V$ is called \emph{normal} if the opmonoidal
  functors $\conv \colon \V \times \V \to \V$ and
  $\J \colon \cat{1} \to \V$ are normal.
\end{Defn}

In elementary terms, the normality of the duoidal $\V$ amounts to the
requirements that the morphisms $\upsilon \colon \J \to \I$ and
$\mu \colon \I \conv \I \to \I$ be invertible; in fact, invertibility of
$\upsilon$ easily implies that of $\mu$ and also of $\gamma$. In a
normal duoidal category, there are maps
$\sigma \colon X \conv Y \to X \comp Y$ and
$\tau \colon X \conv Y \to Y \comp X$ given by
\begin{equation}\label{eq:21}
  \begin{aligned}
    \sigma &= X \conv Y \xrightarrow{\cong} (X \comp \I) \conv (\I
    \comp Y) \xrightarrow{\xi} (X \conv \I) \comp (\I \conv Y)
    \xrightarrow{\cong} X \comp Y\\
    \tau &= X \conv Y \xrightarrow{\cong} (\I \comp
    X) \conv (Y \comp \I) \xrightarrow{\xi} (\I \conv Y) \comp (X
    \conv \I) \xrightarrow{\cong} Y \comp X\rlap{ ,}
  \end{aligned}
\end{equation}
where the unnamed isomorphisms are built from unit constraints for
$\comp$ and $\conv$ and the inverse of $\upsilon \colon \J \to \I$.
These maps play a central role in the theory that follows. For the
canonical normal duoidal structure on a braided monoidal category,
$\sigma$ and $\tau$ are the identity map and the braiding
respectively.

More generally, any normal duoidal $\V$ possesses the following
families of maps, which are the \emph{linear distributivities} of
\cite{Cockett1997Weakly} (there called ``weak distributivities'').
While the statements of our main definitions and results will not make
use of these, the proofs will.
\begin{align}
  & \delta^\ell_{\ell} \colon X \conv (Y \comp Z) \xrightarrow{\cong} (X \comp \I) \conv (Y \comp Z)
    \xrightarrow{\xi} (X \conv Y) \comp (\I \conv Z) \xrightarrow{\cong}
    (X \conv Y) \comp Z\\
  & \delta^\ell_{r} \colon X \conv (Y \comp Z) \xrightarrow{\cong} (\I \comp X) \conv (Y \comp Z)
    \xrightarrow{\xi} (\I \conv Y) \comp (X \conv Z) \xrightarrow{\cong}
    Y \comp (X \conv Z)\\
  & \delta^r_{\ell} \colon (X \comp Y) \conv Z \xrightarrow{\cong} (X \comp Y) \conv (Z \comp \I)
    \xrightarrow{\xi} (X \conv Z) \comp (Y \conv \I) \xrightarrow{\cong}
    (X \conv Z) \comp Y\\
  & \delta^r_{r} \colon (X \comp Y) \conv Z \xrightarrow{\cong} (X \comp Y) \conv (\I \comp Z)
    \xrightarrow{\xi} (X \conv \I) \comp (Y \conv Z) \xrightarrow{\cong}
    X \comp (Y \conv Z)\rlap{ .}
\end{align}

Note that when $\V$ is $\conv$-braided, the maps $\delta^\ell_\ell$ and
$\delta^{\ell}_{r}$ above, and similarly $\delta^r_\ell$ and
$\delta^{r}_{r}$, may be derived from each other using the braiding. A
particular case of this is that the maps $\sigma$ and $\tau$ are
related through the braiding $c$ by a commuting diagram
\begin{equation}\label{eq:4}
  \cd{
    & 
    X \conv Y \ar[dl]_-{\sigma} \ar[dr]^-{\tau} \ar[d]_-{c} \\
    X \comp Y &
    Y \conv X \ar[l]_-{\tau} \ar[r]^-{\sigma} &
    Y \comp X\rlap{ .}
  }
\end{equation}

\subsection{Bimonoids and duoids}
\label{sec:bimonoids-duoids}
Some kinds of structure definable using a braiding on a monoidal
category can be defined more generally using the interchange maps of a
duoidal structure; two examples relevant for us are the notions of
\emph{bialgebra} and of \emph{commutative monoid}. The key to the
generalisation is the fact that, since the $\comp$-monoidal structure
of a duoidal $\V$ is lax $\conv$-monoidal, it lifts to a
$\comp$-monoidal structure on the category of $\conv$-monoids in $\V$.
\begin{Defn}
  \label{def:10}\cite[Definitions~6.25~and~6.28]{Aguiar2010Monoidal}
Let $\V$ be a duoidal category.
  \begin{enumerate}[(i)]
  \item A \emph{bimonoid} in $\V$ is a $\comp$-comonoid in the
    category of $\conv$-monoids in $\V$. The category of bimonoids
    $\cat{Bimon}(\V)$ is the category
    $\cat{Comon}_\comp(\cat{Mon}_\conv(\V))$.
\vskip0.25\baselineskip
  \item A \emph{duoid} in $\V$ is a $\comp$-monoid in the category of
    $\conv$-monoids in $\V$. The category of duoids $\cat{Duoid}(\V)$
    is the category $\cat{Mon}_\comp(\cat{Mon}_\conv(\V))$.
  \end{enumerate}
  If $\V$ is $\conv$-braided, then we declare a bimonoid or duoid to be
  \emph{$\conv$-commutative} if it its underlying $\conv$-monoid is commutative.
\end{Defn}
Spelling these definitions out in more detail, a bimonoid is thus an
object $A$ equipped with $\conv$-monoid and $\comp$-comonoid
structures $e \colon \J \rightarrow A \leftarrow A \conv A \colon m$
and $u \colon \I \leftarrow A \rightarrow A \comp A \colon d$ which
are such that $e \colon \J \rightarrow A$ is a map of
$\comp$-comonoids, $u \colon A \rightarrow \I$ is a map of
$\conv$-monoids, and the following \emph{bialgebra} diagram commutes:
\begin{equation}
  \cd{
    A \conv A \ar[d]_-{d \conv d} \ar[r]^-{m} &
    A \ar[r]^-{d} & A \comp A \\
    (A \comp A) \conv (A \comp A) \ar[rr]^-{\xi} & &
    (A \conv A) \comp (A \conv A) \ar[u]_-{m \comp m}\rlap{ .}
  }
\end{equation}

On the other hand, a duoid is an object $A$ equipped with
$\comp$-monoid and $\conv$-monoid structures
$e \colon \I \rightarrow A \leftarrow A \comp A \colon m$ and
$\iota \colon \J \rightarrow A \leftarrow A \conv A \colon \nu$, which
are such that $e \colon \I \rightarrow A$ is a map of $\conv$-monoids,
$\iota \colon \J \rightarrow A$ is a map of $\comp$-monoids, 
and the following \emph{duoid} diagram commutes:
\begin{equation}\label{eq:7}
  \cd{
    (A \comp A) \conv (A \comp A) \ar[r]^-{\xi} \ar[d]_-{m \conv m} &
    (A \conv A) \comp (A \conv A) \ar[r]^-{\nu \comp \nu} &
    A \comp A \ar[d]^-{m} \\
    A \conv A \ar[rr]_-{\nu} & & A\rlap{ .}
  }
\end{equation}
When the duoidal structure on $\V$ is induced by a braided monoidal
structure, bimonoids are bialgebras in $\V$ in the usual sense, while
duoids reduce by the Eckmann--Hilton argument to commutative monoids.

\section{Commutativity: the general theory}
\label{sec:comm-gener-theory}
In this section, we introduce our abstract framework for
commutativity. As explained in the introduction, the central notion is
that of a \emph{bifunctor} between categories enriched over a normal
duoidal category $\V$; we introduce this, and describe circumstances
under which bifunctors between small $\V$-categories are represented
by a monoidal closed structure on the $2$-category of $\V$-categories.

\subsection{Sesquifunctors}
\label{sec:sesquifunctors}
To start with we assume only that $(\V, \comp, I)$ is a monoidal
category; shortly, we will add a second monoidal structure making $\V$
normal duoidal, but even then our convention will be that a
\emph{$\V$-category} is one enriched in $(\V, \comp, \I)$. We write
$\V\text-\cat{Cat}$ and $\V\text-\cat{CAT}$ for the $2$-categories of
small and large $\V$-categories, together with the
$\V$-functors and $\V$-natural transformations between them;
see~\cite[\S1.2]{Kelly1982Basic} for the full definitions.

The notion of $\V$-bifunctor we introduce is expressed in terms of
families of one-variable $\V$-functors satisfying a commutativity or
bifunctoriality condition. While the commutativity condition requires
a normal duoidal structure, the rest of the definition does not; we
begin, therefore, with this.
\begin{Defn}
  \label{def:15}
  Let $\A$, $\B$ and $\C$ be $\V$-categories.
  \begin{itemize}
  \item A \emph{sesquifunctor} $T \colon \A, \B \rightarrow \C$
    comprises families
    $(T(a, \thg) \colon \B \rightarrow \C)_{a \in \A}$ and
    $(T(\thg, b) \colon \A \rightarrow \C)_{b \in \B}$ of
    $\V$-functors such that 
    $T(a, \thg)(b) = T(\thg, b)(a) = Tab$, say.\vskip0.25\baselineskip
  \item A \emph{sesquitransformation}
    $\alpha \colon S \Rightarrow T \colon \A, \B \rightarrow \C$
    comprises families of $\V$-natural transformations
    $\alpha_{a\thg} \colon S(a, \thg) \Rightarrow T(a, \thg)$ and
    $\alpha_{\thg b} \colon S(\thg, b) \Rightarrow T(\thg, b)$ such
    that
    $(\alpha_{a\thg})_b = (\alpha_{\thg b})_a = \alpha_{ab} \colon I
    \rightarrow \C(Sab, Tab)$.
\end{itemize}
\end{Defn}
Given a sesquifunctor $T \colon \A, \B \rightarrow \C$ and
$\V$-functors $F \colon \A' \rightarrow \A$ and
$G \colon \B' \rightarrow \B$ and $H \colon \C \rightarrow \C'$, there
is a composite sesquifunctor
$HT(F,G) \colon \A', \B' \rightarrow \C'$ with
components $HT(Fa', G\thg) \colon \B' \rightarrow \C'$ and
$HT(F\thg, Gb') \colon \A' \rightarrow \C'$; with the obvious
extension of this composition to transformations, we obtain a $2$-functor
\begin{equation}
  \cat{SESQ}(\thg, \thg; \thg) \colon \V\text-\cat{CAT}^\op \times \V\text-\cat{CAT}^\op \times \V\text-\cat{CAT} \rightarrow \cat{CAT}\rlap{ .}
\end{equation}

There are also higher arity analogues of sesquifunctors and
sesquitransformations; the general pattern may be deduced if we describe
the next simplest case:
\begin{Defn}
  \label{def:16}
  A \emph{ternary sesquifunctor} $T \colon \A, \B, \C \rightarrow \D$
  between $\V$-categories comprises families of sesquifunctors
  $T(a, \thg, \thg) \colon \B, \C \rightarrow \D$ and
  $T(\thg, b, \thg) \colon \A, \C \rightarrow \D$ and
  $T(\thg, \thg, c) \colon \A, \B \rightarrow \D$ which are compatible
  on objects, in the sense that the $\V$-functors $T(a,b,\thg)$ and
  $T(a, \thg, c)$ and $T(\thg, b, c)$ are unambiguously defined.
\end{Defn}
These higher arity sesquifunctors and transformations also compose;
for example, given sesquifunctors $T \colon \C, \D \rightarrow \E$ and
$S \colon \A, \B \rightarrow \C$, the ternary sesquifunctor
$T(S,1) \colon \A, \B, \D \rightarrow \E$ has components
$T(S(a,\thg), 1)$ and $T(S(\thg, b), 1)$ and $T(S\thg, d)$. Taken
together, these compositions make the totality of $\V$-categories,
sesquifunctors and sesquitransformations into a \emph{symmetric
  $2$-multicategory} $\cat{SESQ}$. This structure on sesquifunctors is studied in some
detail, and in a somewhat more general context, in~\cite{Weber2013Free}; what is relevant here is that the
$2$-multicategory $\cat{Sesq}$ of sesquifunctors between small
$\V$-categories is often represented by a monoidal structure on
$\V\text-\cat{Cat}$. The following result summarises the key points;
in (c), we write $\mathcal I$ for the $\V$-category with a single
object $\ast$ and $\mathcal I(\ast, \ast) = \I$.

\begin{Prop}
  \label{prop:13}
  Let  $\A$, $\B$ and $\C$ be
  $\V$-categories, with $\A$ and $\B$ small.
  \begin{enumerate}[(a)]
  \item\label{item:1} If $\V\text-\cat{Cat}$ has conical colimits, preserved by the
    inclusion into $\V\text-\cat{CAT}$, then the $2$-functor
    $\cat{SESQ}(\A, \B; \thg) \colon \V\text-\cat{CAT} \rightarrow
    \cat{CAT}$
    admits a small representation
    $\A \fun \B$.\vskip0.25\baselineskip
  \item If $\V$ is complete, then the $2$-functors $\cat{SESQ}(\B, \thg\,; \C) \colon \V\text-\cat{CAT}^\op \rightarrow
    \cat{CAT}$ and
    $\cat{SESQ}(\thg, \B; \C) \colon \V\text-\cat{CAT}^\op \rightarrow
    \cat{CAT}$ admit a common representing object $\dbr{\B, \C}$,
    which is small whenever $\C$ is so.\vskip0.25\baselineskip
  \item If (a) and (b) hold, then
    $(\V\text-\cat{Cat},\, \fun,\, \mathcal I)$ is a symmetric
    monoidal closed $2$-category.
  \end{enumerate}
\end{Prop}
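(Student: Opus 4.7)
My approach is to realise $\A \fun \B$ and $\dbr{\B, \C}$ via direct constructions, then assemble the pieces in (c) by appealing to the 2-multicategorical structure on sesquifunctors. For (a), I would build $\A \fun \B$ as the coequaliser in $\V\text-\cat{Cat}$ of two $\V$-functors
\[
\coprod_{(a, b)} \mathcal I \rightrightarrows \biggl(\coprod_{a \in \A} \B\biggr) \sqcup \biggl(\coprod_{b \in \B} \A\biggr),
\]
where one sends the indexing object $(a, b)$ to $b$ in the $a$-th copy of $\B$, and the other sends it to $a$ in the $b$-th copy of $\A$. Such a coequaliser exists by hypothesis, has object-set $\ob\A \times \ob\B$, and carries universal $\V$-functors $i_a \colon \B \to \A \fun \B$ and $j_b \colon \A \to \A \fun \B$ sending $b \mapsto (a, b)$ and $a \mapsto (a, b)$. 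Its universal property says that $\V$-functors $\A \fun \B \to \C$ correspond to pairs of such families agreeing on objects---precisely the data of a sesquifunctor. The correspondence extends to large $\C$ by exactly the preservation of conical colimits by $\V\text-\cat{Cat} \hookrightarrow \V\text-\cat{CAT}$; functoriality in sesquitransformations and $\V$-natural transformations is automatic.

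For (b), with $\V$ complete, I would define $\dbr{\B, \C}$ to have $\V$-functors $\B \to \C$ as objects and products
\[
\dbr{\B, \C}(F, G) \defeq \prod_{b \in \ob\B} \C(Fb, Gb)
\]
as hom-objects, with composition and identities obtained from those of $\C$ via the universal property of the product. A $\V$-functor $H \colon \A \to \dbr{\B, \C}$ then unpacks to an assignment $a \mapsto T(a, \thg) \colon \B \to \C$ together with, for each $b$, a family of morphisms $\A(a, a') \to \C(Tab, Ta'b)$ compatible with composition and units---that is, a $\V$-functor $T(\thg, b) \colon \A \to \C$. This is precisely a sesquifunctor $\A, \B \to \C$; the case $\B, \A \to \C$ follows from the evident symmetry of the sesquifunctor notion in its two domain variables.

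For (c), the unit isomorphism $\mathcal I \fun \A \cong \A$ falls out by direct inspection, since a sesquifunctor $\mathcal I, \A \to \C$ reduces to a $\V$-functor $\A \to \C$. The symmetry $\A \fun \B \cong \B \fun \A$ likewise reflects the symmetry of sesquifunctors, and associativity follows by iterating (a), since $\V$-functors out of both $(\A \fun \B) \fun \D$ and $\A \fun (\B \fun \D)$ correspond to ternary sesquifunctors $\A, \B, \D \to \C$ in the sense of the higher-arity definition. Closedness is then a restatement of the joint representing property: $\V\text-\cat{Cat}(\A \fun \B, \C) \cong \cat{SESQ}(\A, \B; \C) \cong \V\text-\cat{Cat}(\A, \dbr{\B, \C})$. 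The main obstacle is the bookkeeping of the coherence axioms for a symmetric monoidal closed 2-category (pentagon, triangle, hexagon and their 2-dimensional analogues); rather than check these by hand, I would invoke the standard correspondence between representable symmetric 2-multicategories and symmetric monoidal 2-categories, applied to the 2-multicategory $\cat{Sesq}$ whose representability is secured by iterated application of (a).
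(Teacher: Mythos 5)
Your constructions agree with the paper's: the coequaliser you use in (a) is precisely the coequaliser presentation of the paper's defining pushout for $\A \fun \B$, your $\dbr{\B,\C}$ in (b) is identical, and your route to (c) — showing that iterated binary tensors classify ternary (and higher) sesquifunctors and then delegating the coherence axioms to the correspondence between representable symmetric $2$-multicategories and symmetric monoidal $2$-categories — is the same argument the paper gives, with Remark~\ref{rk:1} explicitly endorsing the formal multicategorical derivation of associativity. The only cosmetic difference is that the paper obtains the ternary classification by passing through the hom $\dbr{\C,\D}$ rather than by iterating (a) directly, so the proposal is correct and essentially the paper's proof.
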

\begin{proof}
  For (a), the universal sesquifunctor
  $V \colon \A, \B \rightarrow \A \fun \B$ is  the
  pushout:
\begin{equation}\label{eq:18}
  \cd[@C-2.7em@-0.6em@!C]{
    & \sum_{a,b \in \A \times \B} \mathcal I \ar[dl]_-{\ell} \ar[dr]^-{r} \\
    \sum_{a \in \A} \B \ar[dr]_-{\spn{V(a, \thg)}_{a \in \A}} & & \sum_{b\in \B} \A \ar[dl]^-{\spn{V(\thg, b)}_{b \in \B}} \\ &    \A \fun \B
  }
\end{equation}
in $\V\text-\cat{Cat}$. Here, the $(a,b)$-components of $\ell$ and $r$
pick out the object $b$ in the $a$th copy of $\B$, respectively the
object $a$ in the $b$th copy of $\A$. 

For (b), we take $\dbr{\B, \C}$ to have $\V$-functors
$F \colon \B \to \C$ as objects, and hom-objects given by
$\dbr{\B,\C}(F,G) = \prod_{a \in \B}\C(Fa,Ga)$ with componentwise
composition. Clearly $\dbr{\B, \C}$ is small if $\C$ is so. The universal
sesquifunctor $\varepsilon \colon \dbr{\B, \C}, \B \rightarrow \C$ has
$\varepsilon(F, \thg) = F \colon \B \rightarrow \C$ and
$\varepsilon(\thg, b) = \mathrm{ev}_b \colon \dbr{\B, \C} \rightarrow
\C$; the universal $\B, \dbr{\B, \C} \rightarrow \C$ is dual.

For (c), we obtain the unit constraints $\lambda$ and $\rho$ of the
desired monoidal structure directly on taking $\A = \mathcal I$ or
$\B = \mathcal I$ in~\eqref{eq:18}, while the symmetry constraints are
immediate from \eqref{eq:18}'s symmetry in $\A$ and $\B$. As for
associativity, let $\A$, $\B$ and $\C$ be small $\V$-categories; it is
easy to see that composition with the universal map
$\varepsilon \colon \dbr{\C, \D}, \C \rightarrow \D$ induces
bijections, $2$-natural in $\D$, of the form:
\begin{equation}\label{eq:24}
  \cat{SESQ}(\A, \B; \dbr{\C, \D}) \xrightarrow{\cong} \cat{SESQ}(\A, \B, \C; \D)\rlap{ ,}
\end{equation}
where on the right we have the category of ternary sesquifunctors and
sesquitransformations. Since
$\cat{SESQ}(\A, \B; \dbr{\C, \D}) \cong \V\text-\cat{CAT}(\A \fun \B,
\dbr{\C, \D}) \cong \V\text-\cat{CAT}((A \fun \B) \fun \C, \D)$,
we see that $(\A \fun \B) \fun \C$ classifies ternary sesquifunctors;
by symmetry, so too does $\A \fun (\B \fun \C)$, whence there is a
unique isomorphism
$\alpha \colon (\A \fun \B) \fun \C \cong \A \fun (\B \fun \C)$
commuting with the universal maps. The triangle axiom is now easy to
verify, while the pentagon axiom follows by arguing that each vertex
of the pentagon represents quaternary sesquifunctors, and each edge
commutes with the universal maps.
\end{proof}
\begin{Rk}
  \label{rk:1}
  The observation in (c) above that the maps~\eqref{eq:24} are
  bijective is part of the fact that $\cat{SESQ}$ is a \emph{closed
    $2$-multicategory}; as explained in~\cite{Manzyuk2012Closed},
  this, together with the \emph{weak representability} of multimaps
  exhibited in (a), allows the associativity of the monoidal structure
  to be derived in a purely formal manner. We will use a similar
  argument in the proof of Proposition~\ref{prop:17} below.
\end{Rk}

When $\V = \cat{Set}$, the symmetric monoidal closed structure on
$\cat{Cat}$ this proposition yields is the ``funny tensor
product''~\cite{Street1996Categorical}, whose internal hom
$\dbr{\B, \C}$ is the category of functors and
not-neccessarily-natural transformations $\B \rightarrow \C$. This
and the cartesian structure are in fact the only symmetric
monoidal closed structures on $\cat{Cat}$; see~\cite{Foltz1980171}.

\subsection{Bifunctors}
\label{sec:bifunctors}
We are now ready to state the central definition of our theory: that
of a commuting sesquifunctor, or \emph{bifunctor}. In order to do so,
we henceforth assume that $\V$ is a normal duoidal category
$(\V, \conv, \J, \comp, \I)$; we reiterate that, in this context,
``$\V$-category'' will mean ``category enriched in
$(\V, \comp, \I)$''.

\begin{Defn}
  \label{def:17}
  Let $\V$ be a normal duoidal category, and let $\A,\B,\C$ be
  $\V$-categories. A sesquifunctor $T \colon \A,\B \rightarrow \C$ is
  said to \emph{commute}, or to be a \emph{bifunctor}, if for each
  $a,a' \in \A$ and $b,b' \in \B$, the diagram
\begin{equation}\label{eq:19}
  \cd[@!C@C-9.9em@R-0.8em]{
    & \sh{l}{2.5em}\A(a, a') \comp \B(b, b') \ar[rr]^-{T(\thg, b') \comp T(a, \thg)} & &
    \sh{r}{3em}\C(Tab', Ta'b') \comp \C(Tab, Tab') \ar[dr]^m \\ 
    \A(a, a') \conv \B(b, b') \ar[ur]^-{\sigma} \ar[dr]_-{\tau} & & & &
    \C(Tab, Ta'b') \\
    &\sh{l}{2.5em}\B(b, b') \comp \A(a, a') \ar[rr]_-{T(a', \thg) \comp T(\thg, b)} & &
    \sh{r}{3em}\C(Ta'b, Ta'b') \comp \C(Tab, Ta'b) \ar[ur]_m
  }
\end{equation}
commutes in $\V$; here $\sigma$ and $\tau$ are the maps
defined in~\eqref{eq:21}.
\end{Defn}
When the normal duoidal structure on $\V$ comes from a braided
monoidal structure, the diagram~\eqref{eq:19} reduces to
the~\eqref{eq:17} of the introduction, and so our bifunctors reduce to
those of~\cite[Chapter~III, \S4]{Eilenberg1966Closed}; in particular,
when $\V = \cat{Set}$, the bifunctoriality of
$T \colon \A, \B \rightarrow \C$ is the familiar requirement that
each~\eqref{eq:16} should commute in $\C$.

\begin{Prop}
  \label{prop:3}
  Let $\V$ be a normal duoidal category, and suppose given
  $\V$-categories, $\V$-functors and $\V$-sesquifunctors as in:
  \begin{equation}
    F \colon \A' \rightarrow \A \qquad G \colon \B' \rightarrow \B \qquad T \colon \A, \B \rightarrow \C \qquad H \colon \C \rightarrow \C'\rlap{ .}
  \end{equation}
  \begin{enumerate}[(i)]
  \item If $T$ commutes, then so does $HT(F,G)$.
  \item If $HT$ commutes and $H$ is faithful, then
    $T$ commutes.
  \item If $\V$ is $\conv$-braided, then $T$ commutes if and only if
    $T^c \colon \B, \A \rightarrow \C$ does so.
  \end{enumerate}
\end{Prop}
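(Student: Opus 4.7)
The plan is to prove all three parts by direct diagram chases in the bifunctor square~\eqref{eq:19}. The key ingredients will be: the naturality of the maps $\sigma, \tau$ of~\eqref{eq:21} in each of their variables; the $\V$-functoriality of $F$, $G$ and $H$, namely the compatibility of their action on hom-objects with composition and with the tensor $\comp$; and, for (iii), the identities encoded by diagram~\eqref{eq:4}.

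For (i), I would fix $a, a' \in \A'$ and $b, b' \in \B'$ and consider~\eqref{eq:19} for $HT(F,G)$ at these objects. After expanding $HT(F,G)(\thg, b')_{a,a'} = H \circ T(\thg, Gb')_{Fa, Fa'} \circ F_{a,a'}$ and similarly for $HT(F,G)(a, \thg)_{b,b'}$; applying $H \circ m = m \circ (H \comp H)$; extracting the common factor $F_{a,a'} \comp G_{b,b'}$ via the functoriality of $\comp$; and finally using naturality of $\sigma$ in both variables --- the top path of the diagram rewrites as the top path of~\eqref{eq:19} for $T$ at $Fa, Fa', Gb, Gb'$, precomposed with $F_{a,a'} \conv G_{b,b'}$ and postcomposed with $H_{TFaGb, TFa'Gb'}$. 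The bottom path rewrites analogously via naturality of $\tau$, so that commutativity of $T$ yields commutativity of $HT(F,G)$. Part (ii) then follows by specialising the same rewriting to $F = 1$ and $G = 1$: this identifies~\eqref{eq:19} for $HT$ with~\eqref{eq:19} for $T$ postcomposed with $H_{Tab, Ta'b'}$, so that faithfulness of $H$ --- interpreted in the standard enriched sense that each $H_{x, y}$ is monic in $\V$ --- forces the inner square to commute.

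For (iii), note that by definition $T^c(b, \thg) = T(\thg, b)$ and $T^c(\thg, a) = T(a, \thg)$. Specialising~\eqref{eq:4} at $X = \A(a, a')$ and $Y = \B(b, b')$ gives
\[
\sigma_{\A(a,a'), \B(b,b')} = \tau_{\B(b,b'), \A(a,a')} \circ c \qquad \text{and} \qquad \tau_{\A(a,a'), \B(b,b')} = \sigma_{\B(b,b'), \A(a,a')} \circ c,
\]
and a short calculation then identifies the top (respectively bottom) path of~\eqref{eq:19} for $T$ at $(a, a', b, b')$ with the bottom (respectively top) path of~\eqref{eq:19} for $T^c$ at $(b, b', a, a')$, precomposed with the braiding $c \colon \A(a,a') \conv \B(b,b') \to \B(b,b') \conv \A(a,a')$. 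Since $c$ is invertible, $T$ commutes if and only if $T^c$ does. The main obstacle is essentially bookkeeping: in (iii) one must observe that the top and bottom paths of~\eqref{eq:19} swap roles under $T \mapsto T^c$, while in (i) the chain of naturality and functoriality rewrites must be assembled in the correct order.
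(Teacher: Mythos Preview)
Your proposal is correct and follows essentially the same route as the paper: for (i) and (ii) the paper simply observes that pre- and postcomposing~\eqref{eq:19} for $T$ by $F_{aa'} \conv G_{bb'}$ and $H_{Tab,Ta'b'}$ yields~\eqref{eq:19} for $HT(F,G)$, and for (iii) it precomposes~\eqref{eq:19} with the braiding $c$ and invokes~\eqref{eq:4}. Your version spells out the intermediate naturality and functoriality steps more explicitly, but the underlying argument is the same.
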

In (ii), we call a functor \emph{faithful} if the
morphisms in $\V$ expressing its action on hom-objects are 
monomorphic. In (iii), we write $T^c$ for the sesquifunctor with
components $T^c(\thg, a) = T(a, \thg)$ and $T^c(b, \thg) = T(\thg, b)$.
\begin{proof}
  Parts (i) and (ii) are immediate on observing that pre- and
  postcomposing~\eqref{eq:19} for $T$ by $F_{aa'} \conv G_{bb'}$ and
  $H_{Tab,Ta'b'}$ yields~\eqref{eq:19} for $HT(F,G)$. Part (iii) is
  verified by precomposing~\eqref{eq:19} with
  $c \colon \B(b,b') \conv \A(a,a') \rightarrow \A(a,a') \conv
  \B(b,b')$ and using~\eqref{eq:4}.
\end{proof}
It follows from this proposition that we have a $2$-functor
\begin{equation}
  \cat{BIFUN}(\thg, \thg; \thg) \colon \V\text-\cat{CAT}^\op \times \V\text-\cat{CAT}^\op \times \V\text-\cat{CAT} \longrightarrow \cat{CAT}
\end{equation}
sending $\A, \B, \C$ to the category of bifunctors and
sesquitransformations ${\A, \B \rightarrow \C}$. Clearly, this is a
locally full sub-$2$-functor of $\cat{SESQ}(\thg, \thg; \thg)$; and
more generally, we can show that the symmetric $2$-multicategory
$\cat{SESQ}$ of $\V$-categories and $n$-ary sesquifunctors has a
locally full sub-$2$-multicategory $\cat{BIFUN}$ of $\V$-categories
and $n$-ary bifunctors; here, for example, a \emph{trifunctor}
$T \colon \A, \B, \C \rightarrow \D$ is a ternary sesquifunctor such
that each of $T(a, \thg, \thg)$ and $T(\thg, b, \thg)$ and
$T(\thg, \thg, c)$ are bifunctors. When $\V$ is an $\ast$-braided
duoidal category, the sub-$2$-multicategory $\cat{BIFUN}$ of
$\cat{SESQ}$ is symmetric by Proposition~\ref{prop:3}(iii);
Example~\ref{ex:2} below shows that it is not so in general.

\subsection{Commuting tensor product of $\V$-categories}
\label{sec:tensor-product-v}
Just as with $\cat{Sesq}$, the $2$-multi\-category $\cat{Bifun}$ of
bifunctors between small $\V$-categories is often represented by a
monoidal structure on $\V\text-\cat{Cat}$, whose tensor product we
call the \emph{commuting tensor product}. Eilenberg and Kelly show
in~\cite[Chapter~III, \S4]{Eilenberg1966Closed} how to construct this
monoidal structure when $\V$ is a symmetric monoidal category:
the commuting tensor product $\A \comm \B$ of $\V$-categories $\A$ and $\B$ has
 object-set $\ob \A \times \ob \B$, hom-objects
$(\A \comm \B)((a,b), (a',b')) = \A(a,a') \otimes \B(b,b')$, and
composition maps defined using the composition in $\A$ and $\B$ and
the symmetry isomorphisms.

This construction still works in the braided monoidal
case~\cite[Remark~5.2]{Joyal1993Braided}, but when the normal duoidal
structure on $\V$ does not come from a braided monoidal one, the
construction of the commuting tensor product is completely different,
and will require stronger assumptions on $\V$. Recall that a
\emph{$\V$-graph} $A$ comprises a set $\ob A$ together with a family
$A(a,a')_{a,a' \in \ob A}$ of objects of $\V$, while a map
$f \colon A \rightarrow B$ of $\V$-graphs is given by a function
$f \colon \ob \A \rightarrow \ob \B$ together with a family of maps
$f_{aa'} \colon A(a,a') \rightarrow B(fa,fa')$ in $\V$. We write
$\V\text-\cat{Gph}$ for the category of \emph{small}
$\V$-graphs---those with small object set---and
$U \colon \V\text-\cat{Cat} \rightarrow \V\text-\cat{Gph}$ for the
obvious forgetful functor. We will say that \emph{free
  $\V$-categories exist} if this functor $U$ has a left adjoint $F$.

\begin{Prop}
  \label{prop:10}
  Let $\V$ be normal duoidal and let $\A, \B$ be small
  $\V$-categories. If $\V\text-\cat{Cat}$ has conical colimits,
  preserved by the inclusion into $\V\text-\cat{CAT}$, and free
  $\V$-categories exist, then
  $\cat{BIFUN}(\A, \B; \thg) \colon \V\text-\cat{CAT} \rightarrow
  \cat{CAT}$ has a small representation $\A \comm \B$.
\end{Prop}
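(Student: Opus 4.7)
The plan is to construct $\A \comm \B$ as a quotient of the sesquifunctor classifier $\A \fun \B$ of Proposition~\ref{prop:13}(a), imposing the commutativity diagram~\eqref{eq:19} universally.

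Let $V \colon \A, \B \rightarrow \A \fun \B$ denote the universal sesquifunctor, so that by Proposition~\ref{prop:13}(a) sesquifunctors $T \colon \A, \B \rightarrow \C$ correspond bijectively to $\V$-functors $\tilde T \colon \A \fun \B \rightarrow \C$ with $\tilde T V = T$. Form the $\V$-graph $G$ with object set $\ob \A \times \ob \B$ and hom-objects $G((a,b),(a',b')) = \A(a,a') \conv \B(b,b')$, and let $FG$ denote its free $\V$-category, existing by hypothesis. By the adjunction $F \dashv U$, a $\V$-functor out of $FG$ is determined by a $\V$-graph map out of $G$; we therefore define parallel $\V$-functors $P, Q \colon FG \rightrightarrows \A \fun \B$, both sending an object $(a,b)$ to $V(a,b)$, by specifying their underlying $\V$-graph maps on $G((a,b),(a',b'))$ to be, respectively, the top and bottom leg of diagram~\eqref{eq:19} applied to $T = V$.

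Now define $\A \comm \B$ to be the coequalizer of $P$ and $Q$ in $\V\text-\cat{Cat}$, which exists as a conical colimit by hypothesis; its object set coincides with $\ob(\A \fun \B) = \ob \A \times \ob \B$, so $\A \comm \B$ is small. Let $W \colon \A, \B \rightarrow \A \comm \B$ be the composite of $V$ with the coequalizer map. To verify the universal property, consider $H \colon \A \fun \B \rightarrow \C$ corresponding to a sesquifunctor $T = HV$. Using $\V$-functoriality of $H$ together with the identities $HV(\thg,b') = T(\thg,b')$ and $HV(a,\thg) = T(a,\thg)$, one computes that $HP$, on the hom $G((a,b),(a',b'))$, coincides with the top leg of~\eqref{eq:19} for $T$, and likewise $HQ$ with the bottom leg. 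Hence $HP = HQ$ in $\V\text-\cat{Cat}$---equivalently, the two $\V$-graph maps $G \rightarrow U\C$ agree---if and only if $T$ is a bifunctor. The universal property of the coequalizer thus identifies $\V$-functors $\A \comm \B \rightarrow \C$ with bifunctors $\A, \B \rightarrow \C$; that this bijection extends to sesquitransformations follows because bifunctors form a locally full sub-$2$-functor of sesquifunctors and conical colimits in $\V\text-\cat{Cat}$ possess the expected two-dimensional universal property under the stated hypothesis.

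The step requiring most care is the unwinding of $HP$ on the generating hom-object into the top leg of~\eqref{eq:19} evaluated at $T$: this relies on $H$ preserving the composition $m$ of $\A \fun \B$ and on the compatibility of $V$ with $T$ noted above, so that the block $m \circ (V(\thg,b') \comp V(a,\thg)) \circ \sigma$ defining $P$ composes with $H$ into $m \circ (T(\thg,b') \comp T(a,\thg)) \circ \sigma$. Once this matching is in place, the remainder is formal, consisting only in chaining together the universal properties of $FG$, of $\A \fun \B$, and of the coequalizer.
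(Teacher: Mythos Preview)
Your proof is correct and follows essentially the same approach as the paper: form the $\V$-graph $G$ (which the paper denotes $U\A \conv U\B$), take the free $\V$-category $FG$, define the parallel pair $P,Q \colon FG \rightrightarrows \A \fun \B$ from the two sides of~\eqref{eq:19} applied to the universal $V$, and coequalize. The paper's treatment of the $2$-dimensional part is slightly more explicit---it observes that since $P$ and $Q$ agree on objects, any $\alpha \colon FQ \Rightarrow GQ$ automatically satisfies $\alpha P = \alpha Q$, whence the coequalizer map is co-fully faithful---whereas your appeal to a ``two-dimensional universal property'' of conical colimits is a touch vague, but the underlying reasoning is the same.
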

\begin{proof}
  Consider the small $\V$-graph $U\A \conv U\B$ with object-set
  $\ob \A \times \ob \B$ and with homs
  $(U\A \conv U\B)((a,b),(a',b')) = \A(a,a') \conv \B(b,b')$. For any
  sesquifunctor $T \colon \A, \B \rightarrow \C$, we have a parallel
  pair of $\V$-graph morphisms $U\A \ast U\B \rightrightarrows U\C$
  which on objects both send $(a,b)$ to $Tab$, and on homs have their
  respective actions given by the upper and lower paths
  around~\eqref{eq:19}. Applying this to the universal sesquifunctor
  $V \colon \A, \B \rightarrow \A \fun \B$ whose existence is
  guaranteed by Proposition~\ref{prop:13}\eqref{item:1}, we obtain a
  pair of $\V$-graph morphisms
  $U\A \conv U\B \rightrightarrows U(\A \fun \B)$, corresponding under
  adjunction to a parallel pair
  $H, K \colon F(U\A \conv U\B) \rightrightarrows \A \fun \B$ of
  $\V$-functors. Let
  \begin{equation}
    \cd{
      F(U\A \conv U\B) \ar@<3pt>[r]^-{H} \ar@<-3pt>[r]_-{K} &
      \A \fun \B \ar@{->>}[r]^-{Q} & \A \comm \B
    }
  \end{equation}
  be their coequalizer in $\V\text-\cat{Cat}$; we claim that
  $QV \colon \A, \B \rightarrow \A \comm \B$ is the required
  universal bifunctor. This is to say that, for each
  $\C \in \V\text-\cat{CAT}$, the functor
  \begin{equation}\label{eq:20}
    \V\text-\cat{CAT}(\A \comm \B, \C) \xrightarrow{(\thg) \circ Q} \V\text-\cat{CAT}(A \fun \B, \C) \xrightarrow{(\thg) \circ V} \cat{SESQ}(\A, \B; \C)
  \end{equation}
  is injective on objects and fully faithful, and has as its image
  precisely the bifunctors $\A, \B \rightarrow \C$. Now $Q$ is the
  coequalizer of two functors $H, K$ which agree on objects; thus if
  $F, G \colon \A \comm \B \rightarrow \C$ and
  $\alpha \colon FQ \Rightarrow GQ$, then necessarily
  $\alpha H = \alpha K$, and so there is a unique
  $\bar \alpha \colon F \Rightarrow G$ with $\alpha = \bar \alpha Q$.
  So the first arrow in~\eqref{eq:20} is fully faithful; the second is
  too, being an isomorphism, and so~\eqref{eq:20} is itself fully
  faithful. On the other hand, a $\V$-functor
  $F \colon \A \fun \B \rightarrow \C$ factors through
  $Q$ just when $FH = FK$; transposing, this is equally to ask that
  the two composite morphisms
  $U\A \conv U\B \rightrightarrows U(\A \fun \B)
  \rightarrow U\C$
  are equal; but since $F$ is a functor, these two composites are
  precisely the two sides of~\eqref{eq:19} for $T = FV$. It follows
  that $F$ is in the image of the full embedding $(\thg) \circ Q$ just
  when $FV$ is a bifunctor, as required.
\end{proof}

\subsection{Functor $\V$-categories}
\label{sec:internal-hom-v}
We now turn our attention to the internal homs associated to the
commuting tensor product of $\V$-categories. It is easy to see that
the unit $\V$-category $\mathcal I$ is a unit for this tensor, from
which it follows that the underlying ordinary category of either
internal hom $[\B, \C]_\ell$ or $[\B, \C]_r$ must be the ordinary
category of $\V$-functors and $\V$-natural transformations
$\B \rightarrow \C$; which justifies our calling these internal homs
\emph{functor $\V$-categories}. The key to constructing these is a
notion of enriched \emph{end}, which generalises from the symmetric
monoidal case the definition of~\cite[\S 2.1]{Kelly1982Basic}. Before
giving this, let us recall some necessary background on profunctors.
\begin{Defn}
  \label{def:5}
  Let $(\V, \comp, \I)$ be a monoidal category.
  \begin{enumerate}[(i)]
  \item If $\A$ and $\B$ are
  $\V$-categories, then a \emph{$\V$-profunctor} $M \colon \A \tor \B$
  comprises a family of objects $M(b,a) \in \V$ together with actions
  $m \colon \A(a,a') \comp M(b,a) \rightarrow M(b,a')$ and
  $m \colon M(b,a) \comp \B(b',b) \rightarrow M(b',a)$ satisfying
  the usual associativity and unitality laws.\vskip0.25\baselineskip
\item Given profunctors $M \colon \A \tor \B$ and
  $N \colon \B \tor \C$ and $L \colon \A \tor \C$, a family of maps
  $f_{abc} \colon M(b,a) \comp N(c,b) \rightarrow L(c,a)$ is
  \emph{bilinear} if it is equivariant with respect to the left
  $\A$-action and right $\C$-action and satisfies the $\B$-bilinearity
  axiom
  $f(m\comp 1) = f(1 \comp m) \colon M(b',a) \comp \B(b,b')
  \comp N(c,b) \rightrightarrows L(c,a)$.\vskip0.25\baselineskip
\item Given $\V$-functors $F \colon \A \rightarrow \C$ and
  $G \colon \B \rightarrow \C$, we write $\C(F,G) \colon \B \tor \A$
  for the profunctor whose $(a,b)$-component is $\C(Fa,Gb)$ and whose
  actions are obtained using composition in $\C$ and the actions of
  $F$ and $G$ on homs. If we have a further $\V$-functor
  $H \colon \D \rightarrow \C$, then it is easy to see that the family
  of composition maps
  $\C(Gb, Hd) \comp \C(Fa, Gb) \rightarrow \C(Fa, Hd)$ is
  $\B$-bilinear.
\end{enumerate}
As our notation suggests, when $\V$ is a normal duoidal
category, we interpret these notions with respect to the
$\comp$-monoidal structure.
\end{Defn}
\begin{Defn}
  \label{def:14}
  Let $\V$ be a normal duoidal category 
  and $M \colon \A \tor \A$ a $\V$-profunctor. A family of maps
  $p_a \colon K \rightarrow M(a,a)$ in $\V$ is called
  \emph{left extranatural} or \emph{right extranatural} if each
  diagram to the left, respectively right, in:
\begin{equation}\label{eq:23}
    \cd[@!C@C-7.7em@R-0.5em]{
    & \sh{l}{1.7em}K \comp \A(a, b) \ar[rr]^-{p_{b} \comp 1} & &
    \sh{r}{2em}M(b,b) \comp \A(a, b) \ar[dr]^m \\ 
    \sh{r}{0.5em} K \conv \A(a, b) \ar[ur]^-{\sigma} \ar[dr]_-{\tau} & & & &
    M(a,b) \\
    &\sh{l}{1.7em}\A(a, b) \comp K \ar[rr]_-{1 \comp p_{a}} & &
    \sh{r}{2em}\A(a,b) \comp M(a,a) \ar[ur]_m
  }\ \ \ 
  \cd[@!C@C-7.7em@R-0.5em]{
  & \sh{l}{1.7em}\A(a, b) \comp K \ar[rr]^-{1 \comp q_a} & &
  \sh{r}{2em}\A(a, b) \comp M(a,a) \ar[dr]^m \\ 
  \sh{r}{0.5em}\A(a, b) \conv K \ar[ur]^-{\sigma} \ar[dr]_-{\tau} & & & &
  M(a,b)\\
  &\sh{l}{1.7em}K \comp \A(a, b) \ar[rr]_-{q_{b} \comp 1} & &
  \sh{r}{2em}M(b,b) \comp \A(a,b) \ar[ur]_m
}
\end{equation}
commutes in $\V$. A \emph{left end} for the profunctor $M$ is a universal left extranatural family
$u_a \colon U \rightarrow M(a,a)$; this means that each extranatural
family $p_a \colon K \rightarrow M(a,a)$ is of the form
$u_a \circ \bar p$ for a unique
$\bar p \colon K \rightarrow U$. A \emph{right end} for $M$ is defined correspondingly.
\end{Defn}
\begin{Prop}
  \label{prop:15}
  If $\V$ is complete and $\ast$-biclosed, and $\A$ is a small
  $\V$-category, then any profunctor
  $M \colon \A \tor \A$ has both a left and right end.
\end{Prop}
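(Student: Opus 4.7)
The plan is to construct the left end as an equalizer in $\V$, using $\conv$-biclosedness to reformulate the extranaturality condition in adjoint form. Fix the profunctor $M \colon \A \tor \A$. I will work with the left end first; the right end is entirely symmetric, using $[\thg,\thg]_r$ in place of $[\thg,\thg]_\ell$.

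First I transpose the two sides of the left extranaturality diagram~\eqref{eq:23} under the adjunction $({\thg}) \conv \A(a,b) \dashv [\A(a,b), {\thg}]_\ell$. For each pair $a,b \in \ob \A$, this produces canonical morphisms
\begin{equation*}
\alpha_{ab} \colon M(b,b) \to [\A(a,b), M(a,b)]_\ell \qquad \text{and} \qquad \beta_{ab} \colon M(a,a) \to [\A(a,b), M(a,b)]_\ell
\end{equation*}
obtained as the transposes of $M(b,b) \conv \A(a,b) \xrightarrow{\sigma} M(b,b) \comp \A(a,b) \xrightarrow{m} M(a,b)$ and of $M(a,a) \conv \A(a,b) \xrightarrow{\tau} \A(a,b) \comp M(a,a) \xrightarrow{m} M(a,b)$, respectively. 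By the very definition of adjoint transposition, to give a left extranatural family $p_a \colon K \to M(a,a)$ is the same as to give a morphism $\bar p \colon K \to \prod_{a} M(a,a)$ such that for every pair $(a,b)$ the two composites $\alpha_{ab} \pi_b \bar p$ and $\beta_{ab} \pi_a \bar p$ agree, where $\pi_c$ denotes the $c$th product projection.

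Using the completeness of $\V$ together with the smallness of $\ob \A$, I form the products $P = \prod_{a \in \ob \A} M(a,a)$ and $Q = \prod_{(a,b) \in \ob \A \times \ob \A} [\A(a,b), M(a,b)]_\ell$, and assemble the families $(\alpha_{ab} \pi_b)_{a,b}$ and $(\beta_{ab} \pi_a)_{a,b}$ into a parallel pair $\alpha, \beta \colon P \rightrightarrows Q$. Let $u \colon U \to P$ be their equalizer, which exists by completeness of $\V$, and define $u_a = \pi_a u \colon U \to M(a,a)$. By construction, $(u_a)$ is left extranatural, and any other left extranatural family $(p_a)$ factors through $u$ uniquely by the universal property of the equalizer together with the reformulation above. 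This gives $(U, u_a)$ as the required left end.

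The only delicate point in this plan is tracking which $\conv$-hom is needed in each case: for the left end the extranaturality diagrams live over $K \conv \A(a,b)$, so $[\A(a,b),{\thg}]_\ell$ is forced, whereas the dual right-end construction uses morphisms out of $\A(a,b) \conv K$ and hence $[\A(a,b),{\thg}]_r$. Once this bookkeeping is correct, the universal property is essentially formal from the adjoint correspondence and the equalizer property; no further hypothesis on $\V$ beyond completeness and $\conv$-biclosedness is required.
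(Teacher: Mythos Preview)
Your proposal is correct and follows essentially the same approach as the paper: both construct the left end as the equalizer of a parallel pair $\prod_a M(a,a) \rightrightarrows \prod_{a,b}[\A(a,b),M(a,b)]_\ell$ whose components are the $\conv$-transposes of the two sides of the left extranaturality hexagon, and both note that the right end is obtained dually using $[\thg,\thg]_r$. Your presentation differs only cosmetically in that you first isolate the ``universal'' transposes $\alpha_{ab},\beta_{ab}$ out of $M(b,b)$ and $M(a,a)$ and then precompose with projections, whereas the paper transposes directly at $K=\prod_a M(a,a)$; naturality of $\sigma$ and $\tau$ makes these the same parallel pair.
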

\begin{proof}
  The left end is given by an equalizer
  $s, t \colon \Pi_a M(a,a) \rightrightarrows \Pi_{a,b} [\A(a,b),
  M(a,b)]_\ell$,
  where the $(a,b)$-components of $s$ and $t$ are the transposes of
  the two sides of the left hexagon in~\eqref{eq:23}, interpreted for
  the family $\pi_a \colon \Pi_a M(a,a) \rightarrow M(a,a)$. The right
  end is given by a similar equalizer, but with the right hom
  $[\thg, \thg]_r$ replacing the left $[\thg, \thg]_\ell$, and with
  the right hexagon replacing the left in~\eqref{eq:23}.
\end{proof}
We will exploit the notions of left and right end to construct the
hom-objects of the functor $\V$-categories $[\B, \C]_\ell$ and
$[\B, \C]_r$ associated to the commuting tensor product. The following
lemma is the crucial step in describing the composition law.
\begin{Lemma}
  \label{lem:4}
  Let $\V$ be normal duoidal, let $\A$ be a $\V$-category, and let
  $M,N,P \colon \A \tor \A$ be profunctors. If
  $p_a \colon K \rightarrow M(a,a)$ and
  $q_a \colon L \rightarrow N(a,a)$ are left
  extranatural families, and
  $r_{abc} \colon M(b,a) \comp N(c,b) \rightarrow P(c,a)$ is a
  bilinear one, then the composite family
  \begin{equation}\label{eq:25}
    K \comp L \xrightarrow{p_a \comp q_a} M(a,a) \comp N(a,a) \xrightarrow{r_{aaa}} P(a,a)
  \end{equation}
  is left extranatural. The corresponding result holds for right extranaturality.
\end{Lemma}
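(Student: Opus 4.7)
The plan is to verify, for each $x, y \in \A$, the left extranaturality hexagon \eqref{eq:23} with $K \comp L$, $P$ and the family $s_a = r_{aaa} \circ (p_a \comp q_a)$ playing the roles of $K$, $M$ and $p$. I would transform the top and bottom paths of this hexagon in parallel until they meet at a common expression, using three ingredients: the outer and middle bilinearity of $r$; the extranaturalities of $p$ and $q$ at $(x, y)$; and duoidal coherence identities relating $\sigma$, $\tau$ and the linear distributivities $\delta^r_r, \delta^r_\ell$.

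First, the outer $\A$-equivariance of $r$ lets me migrate each $\A$-action on $P$ onto one of the tensor factors, rewriting the final composites on the top and bottom as $r_{yyx} \circ (1 \comp m_N)$ and $r_{yxx} \circ (m_M \comp 1)$ respectively, where $m_M$ and $m_N$ denote the appropriate one-sided profunctor actions. Next, the coherence identities
\begin{equation*}
\sigma_{K \comp L,\,\A(x,y)} = (1 \comp \sigma_{L,\A(x,y)}) \circ \delta^r_r \quad \text{and} \quad \tau_{K \comp L,\,\A(x,y)} = (\tau_{K,\A(x,y)} \comp 1) \circ \delta^r_\ell
\end{equation*}
let me factor the hexagon's $\sigma$ and $\tau$ through the linear distributivities, so that the extranaturalities of $q$ (on top) and of $p$ (on bottom) apply in situ and collapse the two paths to
\begin{equation*}
r_{yyx} \circ (p_y \comp q') \circ \delta^r_r \quad \text{and} \quad r_{yxx} \circ (p' \comp q_x) \circ \delta^r_\ell,
\end{equation*}
where $p' \colon K \conv \A(x,y) \to M(x,y)$ and $q' \colon L \conv \A(x,y) \to N(x,y)$ denote the common values of the two sides of the extranaturality hexagons for $p$ and $q$.

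Finally, substituting the $\tau$-presentation of $q'$ into the top expression and the $\sigma$-presentation of $p'$ into the bottom one produces two composites sharing a common middle factor $p_y \comp 1 \comp q_x$, while their outer $r$-and-action parts are matched by the middle $\A$-bilinearity $r_{yyx} \circ (1 \comp m_N) = r_{yxx} \circ (m_M \comp 1)$. All that remains is the residual identity
\begin{equation*}
(1 \comp \tau_{L,\A(x,y)}) \circ \delta^r_r = (\sigma_{K,\A(x,y)} \comp 1) \circ \delta^r_\ell \colon (K \comp L) \conv \A(x,y) \longrightarrow K \comp \A(x,y) \comp L,
\end{equation*}
which holds by duoidal coherence, both sides being canonical composites of the interchange $\xi$ and unit constraints with the same domain and codomain. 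The right-extranatural case is established analogously. The main obstacle is the duoidal bookkeeping: the factorisations of $\sigma, \tau$ through $\delta^r_r, \delta^r_\ell$ and the final coherence identity all need to be verified by unpacking \eqref{eq:21} and the definitions of the linear distributivities, but once these are secured the three main ingredients combine cleanly.
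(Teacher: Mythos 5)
Your proof is correct and is essentially the paper's own argument recast as an equational chase: the paper pastes a single large diagram whose top-left square is exactly your residual identity $(1\comp\tau)\circ\delta^r_r=(\sigma\comp 1)\circ\delta^r_\ell$, whose rectangular regions are the extranaturalities of $p$ and $q$, whose bottom-right square is the middle bilinearity of $r$, and whose identification with the hexagon \eqref{eq:23} for the composite family uses precisely your factorisations $\sigma_{K\comp L,\A(x,y)}=(1\comp\sigma)\circ\delta^r_r$ and $\tau_{K\comp L,\A(x,y)}=(\tau\comp 1)\circ\delta^r_\ell$ together with the outer equivariances of $r$. The one caution is your justification of the residual identity: the blanket principle that canonical composites of $\xi$ and unit constraints with equal domain and codomain coincide is false in a (normal) duoidal category -- for instance $\sigma,\tau\colon X\conv X\to X\comp X$ are both such composites and differ in general -- so this identity must be checked directly from the duoidal axioms by unpacking \eqref{eq:21} and the definitions of $\delta^r_r,\delta^r_\ell$, as you indeed indicate, and as the paper does in asserting that its top-left square commutes.
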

\begin{proof}
  We consider the following diagram, wherein for typographical
  convenience we temporarily write $\A(a,b)$ as $\A_{ab}$, and so on:
  \begin{equation}
  \cd[@C+0.2em@R-0.3em]{
    (K \comp L) \conv \A_{ab} \ar[r]^{\delta^r_r} \ar[d]_{\delta^r_\ell} &
    K \comp (L \conv \A_{ab}) \ar[r]^-{1 \comp \sigma} \ar[d]^-{1 \comp \tau} &
    K \comp L \comp \A_{ab} \ar[r]^-{1 \comp q_b \comp 1} &
    K \comp N_{bb} \comp \A_{ab} \ar[d]^-{1 \comp m}\\
    (K \conv \A_{ab}) \comp L \ar[r]^-{\sigma \comp 1} \ar[d]_-{\tau \comp 1} &
    K \comp \A_{ab} \comp L \ar[r]^-{1 \comp 1 \comp q_a} \ar[d]_-{p_b \comp 1 \comp 1} &
    K \comp \A_{ab} \comp N_{aa} \ar[r]^-{1 \comp m} \ar[d]^-{p_b \comp 1 \comp 1} &
    K \comp N_{ab} \ar[d]^-{p_b \comp 1} \\
    \A_{ab} \comp K \comp L \ar[d]_-{1 \comp p_a \comp 1} &
    M_{bb} \comp \A_{ab} \comp L \ar[r]^-{1 \comp 1 \comp q_a} \ar[d]_-{m \comp 1} &
    M_{bb} \comp \A_{ab} \comp N_{aa} \ar[r]^-{1 \comp m} \ar[d]^-{m \comp 1} &
    M_{bb} \comp N_{ab} \ar[d]^-{r_{abb}} \\
    \A_{ab} \comp M_{aa} \comp L \ar[r]^-{m \comp 1} &
    M_{ab} \comp L \ar[r]^-{1 \comp q_a} &
    M_{ab} \comp N_{aa} \ar[r]^-{r_{aab}} &
    P_{ab} \rlap{ .}
  }
\end{equation}
The rectangular regions commute by left extranaturality of $p$ and $q$, and
the bottom right square by bilinearity of $r$. The top left
square, in which we make use of the linear distributivities of
Section~\ref{sec:normality}, commutes as an easy consequence of the
duoidal category axioms; while the remaining three squares commute by
(ordinary) bifunctoriality of $\comp$. We conclude that the outside
rectangle commutes, and the duoidal axioms and bifunctoriality of
$\comp$ now show that this rectangle is the extranaturality
hexagon~\eqref{eq:23} for~\eqref{eq:25}. The case of right
extranaturality is dual.
\end{proof}
\begin{Prop}
  \label{prop:16}
  Let $\V$ be a complete $\ast$-biclosed normal duoidal category. If
  $\B$ and $\C$ are $\V$-categories with $\B$ small, then the
  $2$-functors
  $\cat{BIFUN}(\thg, \B; \C) \colon \V\text-\cat{CAT}^\op \rightarrow
  \cat{CAT}$
  and
  $\cat{BIFUN}(\B, \thg; \C) \colon \V\text-\cat{CAT}^\op \rightarrow
  \cat{CAT}$
  admit representations $[\B, \C]_\ell$ and $[\B, \C]_r$, which are
  sub-$\V$-categories of $\dbr{\B,\C}$, and are small whenever $\C$ is
  so.
\end{Prop}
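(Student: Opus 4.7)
The plan is to take $[\B, \C]_\ell$ to have the $\V$-functors $\B \to \C$ as objects and hom-objects given by the left end
\[
[\B, \C]_\ell(F, G) = \int^{\mathrm{left}}_{b \in \B} \C(Fb, Gb)\rlap{ ,}
\]
which exists by Proposition~\ref{prop:15}. Identities at $F$ come from the trivially left extranatural family of $\C$-identities $\I \to \C(Fb, Fb)$, while composition is defined by factoring, through the target end, the tensor product of end projections post-composed with the composition in $\C$. This factorisation is legitimate precisely because Lemma~\ref{lem:4}, applied to the bilinear family given by composition in $\C$, shows the resulting family to be left extranatural in $b$. Associativity and unitality transport from $\C$ by uniqueness of the end factorisation; and since the end projections are jointly monic and respect the pointwise composition in $\dbr{\B, \C}$ by construction, $[\B, \C]_\ell$ embeds as a sub-$\V$-category of $\dbr{\B, \C}$.

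For the universal property, a $\V$-functor $H \colon \A \to [\B, \C]_\ell$ assigns, on objects, a $\V$-functor $T(a, \thg) := Ha \colon \B \to \C$ for each $a \in \A$, while its hom-action $\A(a, a') \to [\B, \C]_\ell(T(a, \thg), T(a', \thg))$ corresponds, by universality of the end, to a family of maps $T(\thg, b)_{a, a'} \colon \A(a, a') \to \C(T(a, b), T(a', b))$ left extranatural in $b$. The $\V$-functoriality of $H$, together with the way composition was defined in $[\B, \C]_\ell$, then forces each $T(\thg, b) \colon \A \to \C$ to be a $\V$-functor, so that $T$ is a sesquifunctor. Unpacking Definition~\ref{def:14}'s left extranaturality hexagon for $K = \A(a, a')$ and profunctor $M(b, b') = \C(T(a, b), T(a', b'))$---whose right and left $\B$-actions come from the $\V$-functoriality of $T(a, \thg)$ and $T(a', \thg)$, composed with composition in $\C$---reveals it to be identical to the bifunctoriality condition~\eqref{eq:19}. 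The assignment is manifestly $2$-natural in $\A$, and sesquitransformations match up analogously. The case of $[\B, \C]_r$ is entirely dual, using right ends and right-extranatural families to represent $\cat{BIFUN}(\B, \thg; \C)$.

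Smallness of $[\B, \C]_\ell$ and $[\B, \C]_r$ when $\C$ is small is immediate, since each hom-object is an equalizer of a small product in the complete $\V$ and the object-set is the same as that of $\dbr{\B, \C}$, which is small by Proposition~\ref{prop:13}. The most delicate step will be matching the extranaturality hexagon~\eqref{eq:23} to the bifunctoriality hexagon~\eqref{eq:19}: this requires carefully unwinding the composition-induced left and right $\B$-actions on the profunctor $\C(T(a, \thg), T(a', \thg))$ and checking that the two composites around the hexagon become precisely the upper and lower paths of~\eqref{eq:19}. Once that identification is secure, the remaining verifications---associativity and unitality of the composition law, the bijection between bifunctors $\A, \B \to \C$ and $\V$-functors $\A \to [\B, \C]_\ell$, and its $2$-naturality---are routine applications of the universal property of the end.
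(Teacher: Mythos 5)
Your proposal is correct and follows essentially the same route as the paper: hom-objects given by left ends (via Proposition~\ref{prop:15}), composition induced through the end's universal property using Lemma~\ref{lem:4}, the embedding into $\dbr{\B,\C}$ via the jointly monic end projections, and the universal property established by identifying left extranaturality in $b$ with the bifunctoriality hexagon~\eqref{eq:19}. The only difference is cosmetic (you state the bijection starting from $\V$-functors $\A \to [\B,\C]_\ell$ rather than from bifunctors), so there is nothing substantive to add.
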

\begin{proof}
  The objects of $[\B, \C]_\ell$ are $\V$-functors
  $\B \rightarrow \C$, and the hom-object $[\B, \C]_\ell(F,G)$ is the
  left end of $\C(F,G)$, whose existence is guaranteed by
  Proposition~\ref{prop:15}. Clearly $[\B, \C]_\ell$ will be a small
  $\V$-category whenever $\C$ is so. We write the universal
  extranatural families associated to the homs as
  $p_{F,G,a} \colon [\B, \C]_\ell(F,G) \rightarrow \C(Fa, Ga)$. 

  Now, by universality, maps $\I \rightarrow [\B, \C]_\ell(F,G)$
  correspond to left extranatural families $\I \rightarrow \C(Fa,Ga)$,
  and these are easily seen to correspond to $\V$-natural
  transformations in the usual sense. In particular, for each
  $F \in [\B, \C]_\ell$, the identity $\V$-natural transformation on
  $F$ yields a unique map $j_F$ rendering commutative each square as
  to the left in:
  \begin{equation}
    \cd{
      {\I} \ar[r]^-{j_F} \ar[d]_{1} &
      {[\B, \C]_\ell(F,F)} \ar[d]^{p_{F,F,a}} \\
      {\I} \ar[r]_-{j_{Fa}} &
      {\C(Fa,Fa)}
    } \qquad 
    \cd{
      {[\B,\C]_\ell(G,H) \comp [\B,\C]_\ell(F,G)} \ar[r]^-{m_{FGH}} \ar[d]_{p_{G,H,a} \comp p_{F,G,a}} &
      {[\B, \C]_\ell(F,H)} \ar[d]^{p_{F,H,a}} \\
      {\C(Ga,Ha) \comp \C(Fa,Ga)} \ar[r]_-{m_{Fa,Ga,Ha}} &
      {\C(Fa,Ha)\rlap{ .}}
    }
  \end{equation}

  On the other hand, given $F,G,H \colon \B \rightarrow \C$ and
  $a \in A$, we may form the lower composite around the square right
  above, and by Lemma~\ref{lem:4}, these composites constitute a left
  extranatural family; so by universality we induce a unique $m_{FGH}$
  as displayed making the square commute for all $a \in \B$. This
  defines the unit and composition maps of $[\B,\C]_\ell$; the
  associativity and unitality axioms follow immediately from those of
  $\C$ and the universal property of end. Note also that, by the
  definition of left end, the universal families $p_{F,G,b}$ assemble
  to give monomorphisms
  $[\B,\C]_\ell(F,G)\rightarrowtail{}\prod_b\C(Fb,Gb)=\dbr{\B,\C}(F,G)$,
  and commutativity of the above diagrams immediately implies that
  these are part of an identity-on-objects $\V$-functor
  $[\B,\C]_\ell\to\dbr{\B,\C}$.

  For a $\V$-bifunctor $T \colon \A, \B \rightarrow
  \C$, the corresponding $\bar T \colon \A \rightarrow [\B,
  \C]_\ell$ is given on objects by $\bar T(a) = T(a,
  \thg)$; on homs, the map $\bar T_{aa'} \colon \A(a,a') \rightarrow [\B,
  \C]_\ell(T(a,\thg), T(a',\thg))$ corresponds to the family
  $T(\thg,b)_{aa'} \colon \A(a,a') \rightarrow \C(Tab, Tab')$,
  whose left extranaturality in $b$ is expressed precisely by the
  bifunctoriality hexagon~\eqref{eq:19}. The functoriality of $\bar T$
  follows from that of each $T(\thg,b)$ and the universal
  property of end, and the assignation $T \mapsto \bar T$ is easily seen
  to be bijective, and $2$-natural in $\A$. 

  Suppose now we are given a sesquitransformation
  $\alpha \colon T \Rightarrow S \colon \A, \B \rightarrow \C$. The
  corresponding
  $\bar \alpha \colon \bar T \Rightarrow \bar S \colon \A \rightarrow
  [\B, \C]_\ell$
  has components
  $\bar \alpha_a \colon I \rightarrow [\B, \C]_\ell(T(a,\thg),
  S(a,\thg))$
  corresponding to the family
  $\alpha_{ab} \colon I \rightarrow \C(Tab, Sab)$ whose left
  extranaturality in $b$ is precisely its $\V$-naturality in $b$. The
  $\V$-naturality of $\bar \alpha$ itself is correspondingly the
  $\V$-naturality of the components $\alpha_{ab}$ in $a$. Once again,
  the assignation $\alpha \mapsto \bar \alpha$ is easily seen to be
  bijective, and $2$-natural in $\A$. This shows that $[\B, \C]_\ell$
  represents $\cat{BIFUN}(\thg, \B; \C)$ as required; the case of
  $[\B, \C]_r$ is dual.
\end{proof}

\subsection{The monoidal $2$-category $\V\text-\cat{Cat}$}
\label{sec:2-monoidal-structure}
By assembling the constructions of the preceding two sections, we are
now able to give sufficient conditions for the commuting tensor
product of $\V$-categories to make $\V\text-\cat{Cat}$ into a monoidal
closed $2$-category.

\begin{Prop}
  \label{prop:17}
  Let $\V$ be a complete $\conv$-biclosed normal duoidal category. If
  for all $\C, \D \in \V\text-\cat{Cat}$, the commuting tensor product
  $\C \comm \D$ exists, then it forms part of a biclosed monoidal
  $2$-category structure on $\V\text-\cat{Cat}$ with unit the $\V$-category
  $\mathcal \I$. This monoidal structure is symmetric whenever
  $\V$ is $\ast$-braided.
\end{Prop}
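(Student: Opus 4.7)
The plan is to follow the template of the proof of Proposition~\ref{prop:13}(c), substituting bifunctors for sesquifunctors throughout, and relying on the representabilities of Propositions~\ref{prop:10} and~\ref{prop:16}. Combining these yields $2$-natural bijections
\begin{align*}
  \V\text-\cat{CAT}(\A \comm \B, \C) &\cong \cat{BIFUN}(\A, \B; \C) \\
  &\cong \V\text-\cat{CAT}(\A, [\B, \C]_\ell) \cong \V\text-\cat{CAT}(\B, [\A, \C]_r),
\end{align*}
so that $(\thg) \comm \B \dashv [\B, \thg]_\ell$ and $\A \comm (\thg) \dashv [\A, \thg]_r$ as $2$-adjunctions, which provides the biclosedness once the monoidal structure is in place. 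For the unit constraints, I would observe that a bifunctor $\mathcal{I}, \B \rightarrow \C$ reduces to a $\V$-functor $\B \rightarrow \C$: each $T(\thg, b) \colon \mathcal{I} \rightarrow \C$ is determined by its object value, and the hexagon~\eqref{eq:19} trivialises at $(a, a') = (\ast, \ast)$ because $\mathcal{I}(\ast, \ast) = \I$, so that both $\sigma$ and $\tau$ reduce to the evident identification $\I \conv \B(b, b') \cong \B(b, b') \cong \I \comp \B(b, b')$. This yields $\mathcal{I} \comm \B \cong \B \cong \B \comm \mathcal{I}$.

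For associativity I would introduce, as in Proposition~\ref{prop:13}(c), the notion of a \emph{trifunctor} $T \colon \A, \B, \C \rightarrow \D$: a ternary sesquifunctor each of whose three binary restrictions $T(a, \thg, \thg)$, $T(\thg, b, \thg)$, $T(\thg, \thg, c)$ is a bifunctor. The key intermediate step is a natural bijection
\begin{equation*}
  \cat{BIFUN}(\A, \B; [\C, \D]_\ell) \,\cong\, \cat{TRIFUN}(\A, \B, \C; \D),
\end{equation*}
implemented by post-composition with the universal bifunctor $\varepsilon \colon [\C, \D]_\ell, \C \rightarrow \D$. Granting this, the chain
\begin{align*}
  \V\text-\cat{CAT}((\A \comm \B) \comm \C, \D) &\cong \V\text-\cat{CAT}(\A \comm \B, [\C, \D]_\ell) \\
  &\cong \cat{BIFUN}(\A, \B; [\C, \D]_\ell) \cong \cat{TRIFUN}(\A, \B, \C; \D)
\end{align*}
exhibits $(\A \comm \B) \comm \C$ as classifying trifunctors; a parallel computation using $[\A, \D]_r$ in place of $[\C, \D]_\ell$ does the same for $\A \comm (\B \comm \C)$, and since trifunctor data is manifestly symmetric under permutation of its arguments, Yoneda yields the associator. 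The triangle identity follows directly from the unit construction; the pentagon is obtained, following Remark~\ref{rk:1}, by exhibiting each of its five vertices as classifying \emph{quaternary} bifunctors and checking compatibility with the universal maps. Finally, when $\V$ is $\ast$-braided, Proposition~\ref{prop:3}(iii) provides a natural isomorphism $\cat{BIFUN}(\A, \B; \C) \cong \cat{BIFUN}(\B, \A; \C)$ via $T \mapsto T^c$, which descends by Yoneda to the symmetry constraint $\A \comm \B \cong \B \comm \A$; coherence with associator and units is then routine.

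The main obstacle is the trifunctor correspondence above. Unpacking, a bifunctor $S \colon \A, \B \rightarrow [\C, \D]_\ell$ produces, by composition with $\varepsilon$ and via the universal maps into the end defining $[\C, \D]_\ell(F, G)$, the data of a prospective trifunctor; one must verify that the bifunctoriality hexagon~\eqref{eq:19} for $S$, combined with the left-extranaturality intrinsic to the end, translates precisely into the three bifunctoriality hexagons required of the three binary restrictions, and that this translation is bijective. This is a diagram chase patterned on the proof of Lemma~\ref{lem:4}, making essential use of the linear distributivities of Section~\ref{sec:normality} and the duoidal axioms.
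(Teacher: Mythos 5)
Your overall architecture is the paper's: units via the observation that every sesquifunctor out of $\mathcal I, \B$ is automatically a bifunctor, associativity by exhibiting both bracketings as classifiers of trifunctors through the chain $\V\text-\cat{CAT}((\A \comm \B) \comm \C, \D) \cong \cat{BIFUN}(\A, \B; [\C,\D]_\ell) \cong$ trifunctors, biclosedness from the representabilities of Propositions~\ref{prop:10} and~\ref{prop:16}, and symmetry from Proposition~\ref{prop:3}(iii). The gap is precisely the step you yourself flag as ``the main obstacle'': the bijection $\cat{BIFUN}(\A,\B;[\C,\D]_\ell) \cong \cat{TRIFUN}(\A,\B,\C;\D)$ is asserted, with the verification deferred to ``a diagram chase patterned on Lemma~\ref{lem:4}''. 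Since the entire associativity (and pentagon) argument rests on this bijection, leaving it as an unexecuted chase means the central point of the proof is not actually established in your write-up; moreover the chase you envisage, with linear distributivities and duoidal axioms, is not the efficient route and its delicate direction is not the one you describe. The subtle direction is not ``bifunctor $S$ into $[\C,\D]_\ell$ gives a trifunctor'' (that follows formally from Proposition~\ref{prop:3}(i), since the restrictions of $\varepsilon(S,1)$ are composites of bifunctors with $\V$-functors), but the converse: given a trifunctor $T$, the classifying $\V$-functors $T'(a,\thg)$ and $T'(\thg,b)$ assemble into a sesquifunctor $T' \colon \A, \B \rightarrow [\C,\D]_\ell$, and one must show $T'$ is itself a bifunctor, i.e.\ satisfies the hexagon with values in the end-objects $[\C,\D]_\ell(F,G)$.

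The paper closes this without any new chase: the one-variable translation (hexagon $=$ left extranaturality) was already done once and for all in Proposition~\ref{prop:16}, and for the two-variable commutativity of $T'$ one observes that the evaluation $\V$-functors $\mathrm{ev}_c \colon [\C,\D]_\ell \rightarrow \D$ are jointly faithful (the universal families into the end are jointly monomorphic), so by Proposition~\ref{prop:3}(ii) it suffices that each $\mathrm{ev}_c \circ T' = T(\thg,\thg,c)$ be a bifunctor, which holds by hypothesis. If you replace your projected Lemma-\ref{lem:4}-style computation by this faithfulness argument (or carry the computation out in full), your proof goes through; as written, the keystone is missing.
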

\begin{proof}
  The universal property of the commuting tensor product gives a $2$-functor
  $\comm \colon \V\text-\cat{Cat} \times \V\text-\cat{Cat} \to
  \V\text-\cat{Cat}$
  and $2$-natural transformation
  $q \colon \mathord \square \Rightarrow \mathord \comm$ from the
  tensor product of Proposition~\ref{prop:13} which is pointwise
  epimorphic and co-fully faithful. It is easy to see that any
  sesquifunctor $\mathcal I, \A \rightarrow \B$ or
  $\A, \mathcal I \rightarrow \B$ is a bifunctor, so that the unit
  coherence maps for $\fun$ descend (uniquely) to $\comm$, as to the
  left in:
  \begin{equation}
    \cd{
      {\mathcal \I \fun \A} \ar[r]^-{\lambda_{\fun}} \ar@{->>}[d]_{q} &
      {\A} \ar@{->>}[d]^{\id} \\
      {\mathcal \I \comm \A} \ar@{-->}[r]^-{\lambda_\comm} &
      {\A}
    }
    \qquad 
    \cd{
      {\A \fun \mathcal \I} \ar[r]^-{\rho_{\fun}} \ar@{->>}[d]_{q} &
      {\A} \ar@{->>}[d]^{\id} \\
      {\A \comm \mathcal \I} \ar@{-->}[r]^-{\rho_\comm} &
      {\A}
    } 
    \qquad 
    \cd{
      {(\A \fun \B) \fun \C} \ar[r]^-{\alpha_{\fun}} \ar@{->>}[d]_{q(q \fun \id)} &
      {\A \fun (\B \fun \C)} \ar@{->>}[d]^{q(\id \fun q)} \\
      {(\A \comm \B) \comm \C} \ar@{-->}[r]^-{\alpha_\comm} &
      {\A \comm (\B \comm \C)}\rlap{ .}
    }
  \end{equation}
  Since $\fun$ preserves colimits in each variable, the vertical maps
  in the square right above are also epimorphic and co-fully faithful;
  so to obtain the associativity $2$-natural transformation it
  suffices to show that the $\alpha_{\fun}$'s also descend as
  indicated. The key point is to show that for any $\V$-category $\D$,
  composition with the universal
  $\varepsilon \colon [\C, \D]_\ell, \C \rightarrow \D$ induces
  $2$-natural bijections
  \begin{equation}\label{eq:28}
    \cat{BIFUN}(\A, \B; [\C, \D]_\ell) \rightarrow \cat{BIFUN}(\A, \B, \C; \D)\rlap{ .}
  \end{equation}
  Given this, the $2$-natural bijections
  $\cat{BIFUN}(\A, \B; [\C, \D]_\ell) \cong \V\text-\cat{CAT}((\A
  \comm \B) \comm \C, \D)$ then imply that $(A \comm \B) \comm \C$
  represents trifunctors; a dual argument shows that $\A \comm (\B
  \comm \C)$ does so too, so allowing us to conclude that $\alpha_{\fun}$
  descends as required.


  To show that~\eqref{eq:28} is invertible, consider a trifunctor
  $T \colon \A, \B, \C \rightarrow \D$. For each object $a \in \A$, the
  bifunctor $T(a, \thg, \thg) \colon \B, \C \rightarrow \D$ is
  classified by a $\V$-functor
  $T'(a, \thg) \colon \B \rightarrow [\C, \D]_\ell$, while for each
  $b \in \B$, the bifunctor
  $T(\thg, b, \thg) \colon \A, \C \rightarrow \D$ is classified by a
  $\V$-functor $T'(\thg, b) \colon \A \rightarrow [\C, \D]_\ell$;
  these now assemble to give
  a
  sesquifunctor $T' \colon \A, \B \rightarrow [\C, \D]_\ell$, which we
  claim is commutative. The evident evaluation $\V$-functors
  $\mathrm{ev}_c \colon [\C, \D]_\ell \rightarrow \D$ are jointly
  faithful, and so by Proposition~\ref{prop:3}(ii), it suffices to
  show that each $\mathrm{ev}_c \circ T' \colon \A, \B \rightarrow \C$
  is a bifunctor; which is so since
  $\mathrm{ev}_c \circ T' = T(\thg, \thg, c)$.

  Given this, we conclude by the above argument that both
  $(\A \comm \B) \comm \C$ and $\A \comm (\B \comm \C)$ classify
  trifunctors, whence the associativity constraints for $\fun$ descend
  to $\comm$. The pentagon and triangle axioms for $\comm$ now follow
  easily from those for $\fun$, and so
  $(\V\text-\cat{Cat}, \comm, \mathcal I)$ becomes a biclosed monoidal
  $2$-category with internal homs $[\A, \B]_\ell$ and $[\A, \B]_r$.
  Finally, when $\V$ is $\ast$-braided, we see by
  Proposition~\ref{prop:3}(iii) that $\A \comm \B$ represents
  bifunctors $\B, \A \rightarrow \C$ as well as ones
  $\A, \B \rightarrow \C$; whence the symmetry isomorphisms of $\fun$
  descend to $\comm$, which is thus symmetric monoidal.
\end{proof}

\subsection{Commuting graph morphisms}
\label{sec:comm-graph-morph}
A critical examination of Definition~\ref{def:17} reveals that the
definition of $\V$-bifunctor $\A, \B \rightarrow \C$ makes no use of
the compositional structure of $\A$ or $\B$. Guided by this, we may
define a more general notion of bimorphism whose codomain is still a
$\V$-category but whose domain is given by a pair of mere $\V$-graphs:

\begin{Defn}
  \label{def:18}
  Let $\V$ be a normal duoidal category, let $A$ and $B$ be
  $\V$-graphs and let $\C$ be a $\V$-category. A \emph{graph
    sesquimorphism} $T \colon A, B \rightarrow U\C$ comprises families
  $(T(a, \thg) \colon B \rightarrow U\C)_{a \in A}$ and
  $(T(\thg, b) \colon A \rightarrow U\C)_{b \in B}$ of graph
  morphisms such that $T(a, \thg)(b) = T(\thg, b)(a) = Tab$, say. $T$
  is said to \emph{commute}, or to be a \emph{graph bimorphism}, if
  each instance of~\eqref{eq:19}---with $A$ and $B$
  replacing $\A$ and $\B$---is commutative in $\V$.
\end{Defn}

In fact, when $\V$ is $\ast$-biclosed and complete and free
$\V$-categories exist, the extra generality this provides is only
apparent: graph bimorphisms $A, B \rightarrow U\C$ turn out to
coincide with bifunctors $FA, FB \rightarrow \C$ from the
corresponding free $\V$-categories. The key to proving this is the
construction of a more general kind of functor $\V$-category, whose
domain is a small $\V$-\emph{graph} rather than a $\V$-category.

\begin{Prop}
  \label{prop:6}
  Let $\V$ be a complete $\ast$-biclosed normal duoidal category, let
  $B$ be a small $\V$-graph and let $\C$ be a $\V$-category. There is
  a $\V$-category $[B, \C]_\ell$ and graph bimorphism
  $\varepsilon \colon U[B, \C]_\ell, B \rightarrow U\C$,
  $\V$-functorial in its first variable, that provides a
  representation for the functor
  $\cat{BIMOR}(\thg, B; U\C) \colon \V\text-\cat{GPH}^\op \rightarrow
  \cat{SET}$.
\end{Prop}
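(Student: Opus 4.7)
The plan is to imitate the construction of the functor $\V$-category from Proposition~\ref{prop:16}, replacing the ends (which require a $\V$-category on the right) by equalisers enforcing the graph-level hexagon~\eqref{eq:19} from Definition~\ref{def:18}. I take as objects of $[B, \C]_\ell$ the graph morphisms $F \colon B \to U\C$, and define hom-objects by the equaliser
\[
  [B, \C]_\ell(F, G) \rightarrowtail \prod_{b \in \ob B} \C(Fb, Gb) \rightrightarrows \prod_{b, b' \in \ob B} [B(b, b'), \C(Fb, Gb')]_\ell\rlap{ ,}
\]
whose parallel arrows are the transposes of the top and bottom sides of~\eqref{eq:19} applied to the projection family. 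Completeness of $\V$ and $\conv$-biclosedness ensure these exist; the joint monomorphy of the projections $p_{F,G,b} \colon [B, \C]_\ell(F, G) \to \C(Fb, Gb)$ is built in, and smallness of the object-set follows from smallness of $\C$.

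To equip $[B, \C]_\ell$ with the structure of a $\V$-category, I induce the identity $j_F \colon \I \to [B, \C]_\ell(F, F)$ from the family $j_{Fb} \colon \I \to \C(Fb, Fb)$, the hexagon for which collapses by the unit axioms of $\C$ and the normality of the duoidal structure to $F_{bb'} = F_{bb'}$. The composition $m_{FGH}$ is induced from the family of composites $\C(Gb, Hb) \comp \C(Fb, Gb) \xrightarrow{m} \C(Fb, Hb)$ applied after the projections; its satisfaction of~\eqref{eq:19} is the content of an analog of Lemma~\ref{lem:4} in which $M, N, P$ are all hom-profunctors of $\C$ and the bilinearity of $r$ is replaced by associativity of composition in $\C$, and which uses the linear distributivities $\delta^\ell_\ell, \delta^\ell_r, \delta^r_\ell, \delta^r_r$ and the duoidal axioms in exactly the manner exhibited there. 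The unit and associativity axioms for $[B, \C]_\ell$ follow from those of $\C$ together with joint monomorphy of the projections.

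I then define $\varepsilon \colon U[B, \C]_\ell, B \to U\C$ by $\varepsilon(F, -) = F$ on the first variable and $\varepsilon(-, b) = p_{-,-,b}$ on the second; the latter is a $\V$-functor precisely because $j_F$ and $m_{FGH}$ were constructed to commute with the projections, and the hexagon~\eqref{eq:19} for $\varepsilon$ at $(F, G, b, b')$ is nothing other than the equaliser relation defining $[B, \C]_\ell(F, G)$. For the universal property, a graph bimorphism $T \colon A, B \to U\C$ yields $\bar T \colon A \to U[B, \C]_\ell$ with $\bar T(a) = T(a, -)$ on objects, and $\bar T_{aa'}$ induced from the family $T(-, b)_{aa'} \colon A(a, a') \to \C(Tab, Ta'b)$, whose hexagon compatibility \emph{is} the bifunctoriality of $T$; uniqueness follows from joint monomorphy, and $\V$-functoriality of $\varepsilon$ in its first variable is immediate. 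The principal technical hurdle is the Lemma~\ref{lem:4}-style verification that the composition law on $[B, \C]_\ell$ is well-defined, this being the one place where the linear distributivities and duoidal axioms must be mobilised in full.
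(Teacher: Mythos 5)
Your proposal is correct and follows essentially the same route as the paper: the paper takes the hom-objects to be left ends of the graph-profunctors $\C(F,G) \colon B \tor B$, which by the construction in Proposition~\ref{prop:15} are exactly your equalisers, and it likewise builds composition via the graph analogue of Lemma~\ref{lem:4} and obtains the universal property by transposing along the projections as in Proposition~\ref{prop:16}. Your identification of the relevant hexagon with~\eqref{eq:19} rather than~\eqref{eq:23} is harmless, since the paper itself notes that extranaturality in $b$ is expressed by the bifunctoriality hexagon.
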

Of course, there is a dual construction of $[B, \C]_r$, with
correspondingly dual properties, which we do not trouble to state.
\begin{proof}
First, a \emph{profunctor} between $\V$-graphs $M \colon A \tor B$
comprises components $M(b,a)$ with left $A$-actions and right
$B$-actions, but satisfying no associativity or unit axioms. Next, if
$M \colon A \tor A$ is a profunctor between graphs, then a \emph{left}
or \emph{right extranatural} family $K \rightarrow M(a,a)$ is defined
just as in Definition~\ref{def:14}; the corresponding notions of
\emph{left} and \emph{right end}, together with their construction in
Proposition~\ref{prop:15}, carry over directly. We may thus define
$[B, \C]_\ell$ to be the $\V$-category with:
\begin{itemize}
\item \emph{Objects} being graph morphisms $F \colon B \rightarrow U\C$;
\item \emph{Hom-object} $[B, \C]_\ell(F,G)$ being the left end of
  $\C(F,G) \colon B \tor B$;
\item \emph{Composition} derived as in Proposition~\ref{prop:16},
  using the graph analogue of Lemma~\ref{lem:4}.
\end{itemize}
The universal property of $[B, \C]_\ell$ follows exactly as in
the proof of Proposition~\ref{prop:16}. 
\end{proof}

\begin{Prop}
  \label{prop:18}
  Let $\V$ be complete $\ast$-biclosed normal duoidal, let
  $A$ and $B$ be $\V$-graphs, and let
  $\eta_A \colon A \rightarrow UFA$ and
  $\eta_B \colon B \rightarrow UFB$ exhibit $FA$ and $FB$ as the free
  $\V$-categories on $A$ and $B$. For any $\V$-category $\C$,
  composing with $(\eta_A, \eta_B)$ establishes a bijection
  between $\V$-bifunctors $FA, FB \rightarrow \C$ and graph
  bimorphisms $A,B \rightarrow U\C$.
\end{Prop}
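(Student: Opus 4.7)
I would derive the bijection by composing three established ones. By Proposition~\ref{prop:6}, graph bimorphisms $A, B \to U\C$ correspond to graph morphisms $A \to U[B,\C]_\ell$; by the free--forgetful adjunction $F \dashv U$, these correspond to $\V$-functors $FA \to [B,\C]_\ell$; and by Proposition~\ref{prop:16}, $\V$-bifunctors $FA, FB \to \C$ correspond to $\V$-functors $FA \to [FB,\C]_\ell$. So the proposition reduces to identifying $[B,\C]_\ell$ with $[FB,\C]_\ell$ as $\V$-categories.

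Both $\V$-categories have the same objects: $\V$-functors $FB \to \C$, equivalently (by $F \dashv U$) graph morphisms $B \to U\C$. For the hom-objects, $[B,\C]_\ell(F,G)$ is the left end over the \emph{graph} $B$ of the profunctor $\C(F,G)$, whereas $[FB,\C]_\ell(\bar F, \bar G)$ is the left end over the \emph{$\V$-category} $FB$ of that same profunctor. One direction of the identification is immediate: restriction along $\eta_B$ shows that any family left extranatural over $FB$ is left extranatural over $B$. The converse would then identify the two left ends, and hence the two $\V$-categories---their units and compositions being forced by the end universal property. Tracing through the three bijections above and applying a triangle identity for $F \dashv U$ then shows that the composite is indeed precomposition with $(\eta_A, \eta_B)$.

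The main obstacle is that converse, i.e.\ showing that extranaturality over the generating graph $B$ implies extranaturality over the whole free $\V$-category $FB$. The hom-objects of $FB$ are built from those of $B$ as coproducts of $\comp$-tensor products of generating hom-objects of $B$ (paths), together with $\I$-summands on the diagonal giving identities; since $\V$ is $\conv$-biclosed, $\conv$ preserves these colimits in each variable, so the hexagon~\eqref{eq:23} can be checked one summand at a time. On an $\I$-summand the hexagon is automatic, both legs collapsing to the same identity action; on a length-one summand it is precisely graph extranaturality; and on a longer path it decomposes by induction on path length into length-one hexagons, via a diagram chase with the linear distributivities $\delta^{\ell}_{\ell}, \delta^{\ell}_{r}, \delta^{r}_{\ell}, \delta^{r}_{r}$ of Section~\ref{sec:normality} that parallels the proof of Lemma~\ref{lem:4}.
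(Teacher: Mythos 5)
Your reduction has the right shape, and most of its ingredients are sound: the easy direction (restriction along $\eta_B$), the object-level identification via $F \dashv U$, and the diagram chase showing that the extranaturality hexagon for a $\comp$-composite of homs follows from the hexagons for its factors (this is indeed a Lemma~\ref{lem:4}-style argument with $\delta^{\ell}_{\ell}$, $\delta^{\ell}_{r}$ and duoidal coherence). The genuine gap is in the step where you check the hexagon ``one summand at a time'': you assume that the hom-objects of the free $\V$-category $FB$ are coproducts of $\comp$-tensor products of generating homs (the path formula). Nothing in the hypotheses justifies this. The proposition only assumes that $\V$ is complete and $\conv$-biclosed and that free $\V$-categories \emph{exist}; it assumes neither that $\V$ has the relevant coproducts nor that $\comp$ preserves them in each variable, and without $\comp$-cocontinuity the path formula does not even define a category structure, let alone the free one. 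This is not a hypothetical worry: in the paper's central examples ($\F$ with the substitution tensor, species, $\kappa$-accessible endofunctors) $\comp$ fails to preserve coproducts in its second variable, free $\V$-categories are obtained by an accessibility/transfinite argument (Proposition~\ref{prop:22}), and their homs are not given by paths. So your proof of the key converse---extranaturality over the graph $B$ implies extranaturality over $FB$---only works under extra cocontinuity assumptions that exclude the intended applications.

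The paper's own proof sidesteps any analysis of $FB$ altogether, and you could adopt its mechanism to close the gap: given a graph bimorphism $S \colon A, B \to U\C$, use Proposition~\ref{prop:6} and freeness of $FA$ to get a $\V$-functor $S' \colon FA \to [B,\C]_\ell$, compose with the universal $\varepsilon$ to obtain a graph bimorphism $UFA, B \to U\C$ that is $\V$-functorial in its \emph{first} variable, and then repeat the same representability argument on the other side, with $[UFA,\C]_r$ in place of $[B,\C]_\ell$, to gain $\V$-functoriality in the second variable; functoriality in $FB$ is thus produced by the universal property rather than verified on a presentation of $FB$. (Your desired isomorphism $[B,\C]_\ell \cong [FB,\C]_\ell$ then follows a posteriori, but proving it directly, as you propose, essentially requires either this argument or the unavailable path formula.)
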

\begin{proof}
  If $T \colon FA, FB \rightarrow \C$ is a $\V$-bifunctor, then
  $UT \colon UFA, UFB \rightarrow U\C$ is a graph bimorphism, whence
  by (the graph analogue of) Proposition~\ref{prop:3}(i), so too is
  the composite
  $UT \circ (\eta_A, \eta_B) \colon A, B \rightarrow U\C$. Suppose
  conversely that $S \colon A, B \rightarrow U\C$ is a graph
  bimorphism. This corresponds by Proposition~\ref{prop:6} to a graph
  morphism $A \rightarrow U[B, \C]_\ell$, and so to a $\V$-functor
  $S' \colon FA \rightarrow [B, \C]_\ell$. The composite
  $\varepsilon \cdot (US', 1) \colon UFA, B \rightarrow U\C$ is now a
  graph bimorphism, $\V$-functorial in its first variable, whose
  precomposition with $(\eta_A, 1)$ is $S$. Repeating the same
  argument using $[UFA, \C]_r$ in place of $[B, \C]_\ell$ yields a graph
  bimorphism $UFA, UFB \rightarrow U\C$, $\V$-functorial in each
  variable---thus, a $\V$-bifunctor $FA, FB \rightarrow \C$---whose
  precomposition with $(\eta_A, \eta_B)$ is $S$, as required. 
\end{proof}

\subsection{Change of base}
\label{sec:change-base}
In the final part of this section, we briefly explore the interaction
of commuting tensor products with \emph{change of base}. Recall that,
if $F \colon \V \rightarrow \W$ is a (lax) monoidal functor between
monoidal categories, then there is an induced $2$-functor
$F_\ast \colon \V\text-\cat{CAT} \rightarrow \W\text-\cat{CAT}$, which
sends a $\V$-category $\A$ to the $\W$-category $F_\ast \A$ with the
same set of objects, and with homs $(F_\ast\A)(X,Y) = F(\A(X,Y))$.

If $\V$ and $\W$ are braided monoidal and $F$ is a braided monoidal
functor, then $F_\ast$ easily becomes a monoidal $2$-functor with
respect to the tensor products on $\V\text-\cat{CAT}$ and
$\W\text-\cat{CAT}$, one which is \emph{strong} monoidal whenever $F$
is so. We wish to extend this fact from the braided monoidal to the normal
duoidal context. To this end, given two duoidal categories
$(\V, \conv, \comp)$ and $(\W, \conv, \comp)$, we define a
\emph{duoidal functor} $F\colon\V\to\W$ to be a functor $F$ which is
monoidal with respect to both $\conv$ and $\comp$ in such a way
that the constraint maps $FX\comp FY\to F(X\comp Y)$ and $I\to F(I)$
for the $\comp$-monoidal structure are monoidal natural
transformations with respect to $\conv$. We call $F$
\emph{strong} if both underlying monoidal functors are so.

\begin{Ex}
  \label{ex:3}
  If $\V$ is any duoidal category, then
  $\V(\J, \thg) \colon \V \rightarrow \cat{Set}$ is a duoidal functor
  when $\cat{Set}$ is seen as cartesian duoidal as in
  Example~\ref{ex:1}.
\end{Ex}

\begin{Prop}
  \label{prop:14}
  Let $F \colon \V \rightarrow \W$ be a duoidal functor between normal
  duoidal categories. The induced
  $F_\ast \colon \V\text-\cat{CAT} \rightarrow \W\text-\cat{CAT}$ is
  lax monoidal with respect to the commuting tensor products on
  $\V\text-\cat{CAT}$ and $\W\text-\cat{CAT}$, insofar as these are
  defined; if moreover $\V$, $\W$ and $F$ are $\ast$-braided, then $F_\ast$
  is symmetric monoidal.
\end{Prop}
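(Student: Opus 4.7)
My plan is as follows. To exhibit $F_\ast$ as lax monoidal, I need to construct a unit constraint $\mathcal I_\W \to F_\ast \mathcal I_\V$ and, for all small $\V$-categories $\A, \B$ such that the relevant commuting tensor products exist, a coherent family of $\W$-functors
\[
  \Phi_{\A,\B} \colon F_\ast \A \comm F_\ast \B \to F_\ast(\A \comm \B)\rlap{ .}
\]
The unit constraint is immediate: $F$ carries the $\comp$-unit of $\V$ into $\W$ via $\I_\W \to F\I_\V$, giving a $\W$-functor on the one-object $\V$-categories. For $\Phi_{\A,\B}$, the universal property of the commuting tensor product in $\W$ reduces the task to producing a $\W$-bifunctor $F_\ast \A, F_\ast \B \to F_\ast(\A \comm \B)$. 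The obvious candidate is obtained by applying $F_\ast$ to the universal $\V$-bifunctor $V \colon \A, \B \to \A \comm \B$ of Proposition~\ref{prop:10}; this is automatically a $\W$-sesquifunctor since $F_\ast$ acts on one-variable $\V$-functors.

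The heart of the argument is checking that this sesquifunctor $F_\ast V$ in fact commutes in $\W$, i.e.\ satisfies~\eqref{eq:19} for the normal duoidal structure of $\W$. The crucial lemma to isolate is that every duoidal functor $F \colon \V \to \W$ transports $\sigma$ and $\tau$: the squares
\[
  \cd{
    FX \conv_\W FY \ar[r]^-{\sigma_\W} \ar[d] & FX \comp_\W FY \ar[d] \\
    F(X \conv_\V Y) \ar[r]_-{F\sigma_\V} & F(X \comp_\V Y)
  }
  \qquad
  \cd{
    FX \conv_\W FY \ar[r]^-{\tau_\W} \ar[d] & FY \comp_\W FX \ar[d] \\
    F(X \conv_\V Y) \ar[r]_-{F\tau_\V} & F(Y \comp_\V X)
  }
\]
commute, where the vertical arrows are the monoidal constraints of $F$. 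This follows by unwinding the definitions~\eqref{eq:21}, using the $\conv$- and $\comp$-monoidality of $F$ and the condition that the $\comp$-constraints of $F$ are $\conv$-monoidal, which is precisely the coherence between the interchange maps $\xi_\V$, $\xi_\W$ and the constraints of $F$. Granted this lemma, applying $F$ to the hexagon~\eqref{eq:19} for the $\V$-bifunctor $V$ and pasting with the constraint squares yields~\eqref{eq:19} for $F_\ast V$ in $\W$, as required.

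Once $\Phi_{\A,\B}$ is constructed, its $2$-naturality in $\A, \B$ is immediate from universality. The unit and pentagon axioms for $F_\ast$ reduce, via the universal properties of $\comm$ iterated as in the proof of Proposition~\ref{prop:17}, to the corresponding coherence axioms for $F$ as a monoidal functor with respect to $\comp$ (which controls the composition of $\V$-categories); here one invokes the trifunctor characterization of $(\A \comm \B) \comm \C$ from~\eqref{eq:28}. Finally, when $\V$, $\W$ and $F$ are $\conv$-braided, the compatibility of $F$ with the braidings together with Proposition~\ref{prop:3}(iii) shows that $\Phi_{\A,\B}$ commutes with the symmetry constraints, so that $F_\ast$ is symmetric monoidal.

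I expect the main obstacle to be the preservation lemma for $\sigma$ and $\tau$ in the middle paragraph: the verification is a purely formal duoidal-coherence computation, but it requires a careful choice of instance of the $F$-duoidality axiom (namely, applying it to quadruples involving the units $\I$ and $\J$) together with the interaction of $F$'s $\upsilon$-constraint with those of $\V$ and $\W$. Everything else then reduces to routine bookkeeping with universal properties and monoidal coherence.
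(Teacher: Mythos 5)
Your proposal is correct and follows essentially the same route as the paper: the paper likewise shows that applying $F$ to the hexagon~\eqref{eq:19} and precomposing with the $\conv$-constraint of $F$ turns $\V$-bifunctors into $\W$-bifunctors, and then obtains the lax (and, in the braided case, symmetric) monoidal structure formally from the universal property of the representing tensor products, phrased there as $F_\ast$ being a morphism of (symmetric) $2$-multicategories $\cat{BIFUN}_\V \rightarrow \cat{BIFUN}_\W$. Your explicit isolation of the lemma that duoidal functors transport $\sigma$ and $\tau$ is exactly the coherence check the paper leaves implicit in that step.
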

\begin{proof}
  Since sesquifunctors are defined by families of partial functors, it
  is clear that the action of $F_\ast$ on morphisms can be extended to
  an action
\begin{equation}
  F_\ast \colon \cat{SESQ}_\V(\A,\B;\C)\longrightarrow  \cat{SESQ}_\W(F_\ast\A,F_\ast\B;F_\ast\C)
  \label{eq:58}
\end{equation}
which is 2-natural in $\A$, $\B$ and $\C$. Given a sesqui-$\V$-functor
$T \colon \A, \B \rightarrow \C$, applying $F$ to the bifunctor
diagram~\eqref{eq:19} for $T$ and precomposing with the comparison map
$F\A(a,a') \conv F\B(b,b') \rightarrow F(\A(a,a') \conv \B(b,b'))$
yields the corresponding bifunctor diagram for
$F_\ast(T)$; in particular, commutativity of the former implies
commutativity of the latter, and so  the functors~\eqref{eq:58} restrict to ones
\begin{equation}
  \label{eq:59}
  F_\ast\colon\cat{BIFUN}_\V(\A,\B;\C)\longrightarrow
  \cat{BIFUN}_\W(F_\ast\A,F_\ast\B;F_\ast\C)\rlap{ .}
\end{equation}
More generally, we may show that $F_\ast$ extends to a morphism of
$2$-multicategories $\cat{BIFUN}_\V \rightarrow \cat{BIFUN}_\W$, and
as a direct consequence of this, $F_\ast$ is lax monoidal with respect
to the representing monoidal structures $(\comm, \mathcal I)$ on
$\V\text-\cat{CAT}$ and $\W\text-\cat{CAT}$, insofar as these are
defined. Finally, if $\V$, $\W$ and $F$ are $\ast$-braided, then
$F_\ast$ is a morphism of \emph{symmetric} $2$-multicategories, and so
is \emph{symmetric} monoidal wherever the tensor product is defined.
\end{proof}
In order for change of base along a duoidal functor
$F \colon \V \rightarrow \W$ to be \emph{strong} monoidal with respect
to $\comm$, we will have to assume more than merely that $F$ is strong
duoidal. The reason is that the construction of the commuting tensor
product of $\V$-categories involves free $\V$-categories and colimits
of $\V$-categories, and there is no reason why $F$ being strong
duoidal should force change of base to preserve these. The simplest
way of ensuring such preservation is to ask for $F$ to be a left
adjoint.

More precisely, by a \emph{duoidal adjunction} between duoidal
categories, we mean an adjunction
$F \dashv G \colon \W \rightarrow \V$ for which $F$ and $G$ are
duoidal functors and the unit and counit $1 \Rightarrow GF$ and
$FG \Rightarrow 1$ are monoidal natural transformations with respect
to both $\comp$ and $\conv$. By the considerations
of~\cite{Kelly1974Doctrinal}, the $F$ in this situation must be
\emph{strong} duoidal, and the duoidal constraint cells for $G$
determined as the mates under adjunction of those for $F$.
\begin{Ex}
  \label{ex:4}
  If $\V$ is a normal duoidal category admitting copowers of the unit
  $\J$, then the functor
  $\V(\J, \thg) \colon \V \rightarrow \cat{Set}$ has a left adjoint
  $(\thg) \cdot \J$. If both monoidal structures $\comp$ and $\conv$
  on $\V$ are closed on at least one side, then this left adjoint is
  easily seen to be strong duoidal, and so part of a duoidal
  adjunction
  $(\thg) \cdot J \dashv \V(\J, \thg) \colon \V \rightarrow
  \cat{Set}$.
\end{Ex}
\begin{Prop}
  \label{prop:4}
  If $F\dashv G\colon\W\to\V$ is a duoidal adjunction, then $F_\ast$
  preserves any commuting tensor product of $\V$-categories that 
  exists; hence, under the hypotheses of
  Proposition~\ref{prop:17}, $F_\ast$ becomes a  strong monoidal 2-functor
  $\V\text-\cat{Cat} \rightarrow \W\text-\cat{Cat}$.
\end{Prop}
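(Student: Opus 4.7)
The plan is to show that the lax monoidal constraint
$F_\ast \A \comm F_\ast \B \to F_\ast(\A \comm \B)$
supplied by Proposition~\ref{prop:14} is invertible whenever the left-hand side exists. I would do this by showing that $F_\ast$ preserves each ingredient of the explicit construction of the commuting tensor product given in Propositions~\ref{prop:13}\eqref{item:1} and~\ref{prop:10}. Strong monoidality of $F_\ast$ then follows under the hypotheses of Proposition~\ref{prop:17}.

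The starting observation is that the duoidal adjunction $F \dashv G$ is in particular a $\comp$-monoidal adjunction in which the left adjoint is strong monoidal (as noted just before Example~\ref{ex:4}). Standard doctrinal-adjunction arguments then produce an induced adjunction of $2$-functors $F_\ast \dashv G_\ast \colon \W\text-\cat{CAT} \rightarrow \V\text-\cat{CAT}$, and likewise at the level of $\V$-graphs. Hence $F_\ast$ preserves all small colimits that exist in $\V\text-\cat{Cat}$ and in $\V\text-\cat{Gph}$.

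Next I would verify preservation of each building block. Since $F$ is strong $\comp$-monoidal, $F_\ast \mathcal I \cong \mathcal I$, so $F_\ast$ preserves the pushout~\eqref{eq:18}, yielding $F_\ast(\A \fun \B) \cong F_\ast \A \fun F_\ast \B$. The free $\V$-category functor on $\V$-graphs is built out of iterated $\comp$-tensor products and coproducts of hom-objects; since $F$ is strong $\comp$-monoidal and cocontinuous, this construction commutes with $F_\ast$. Since $F$ is strong $\conv$-monoidal, the graph $U\A \conv U\B$ with hom-objects $\A(a,a') \conv \B(b,b')$ is sent by the graph version of $F_\ast$ to $UF_\ast\A \conv UF_\ast\B$ up to canonical isomorphism.

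It then remains to check that the parallel pair whose coequalizer defines $\A \comm \B$ is preserved. These morphisms are obtained, via the free/forgetful adjunction, from the two composites forming the outer paths of the bifunctoriality hexagon~\eqref{eq:19} for the universal sesquifunctor $V \colon \A, \B \to \A \fun \B$; on the relevant hom-objects they are built from composition in $\A \fun \B$, from the actions of the $\V$-functors $V(a,\thg)$ and $V(\thg,b)$, and from the natural transformations $\sigma$ and $\tau$ of~\eqref{eq:21}. A strong duoidal functor preserves both the interchange $\xi$ and the inverse of the normality map $\upsilon$, and therefore preserves $\sigma$ and $\tau$; combined with the preservation results of the previous paragraph, this lets us identify the coequalizer defining $F_\ast(\A \comm \B)$ with the one defining $F_\ast\A \comm F_\ast\B$. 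The main point requiring care is exactly this last verification that $F$'s strong-duoidal structure relates compatibly with the natural transformations $\sigma$ and $\tau$ used to express bifunctoriality; everything else is formal consequence of $F_\ast$ being a left adjoint and $F$ being strong monoidal for both $\comp$ and $\conv$.
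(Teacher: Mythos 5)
Your proposal takes a genuinely different route from the paper, and as written it proves less than the proposition asserts. The statement is that $F_\ast$ preserves \emph{any} commuting tensor product of $\V$-categories that exists -- in particular, if $\A \comm \B$ exists then $F_\ast(\A\comm\B)$ \emph{is} a commuting tensor product of $F_\ast\A$ and $F_\ast\B$, so that the latter exists automatically and no hypotheses on $\W$ or on how $\A\comm\B$ was built are needed. Your argument instead (i) assumes $F_\ast\A \comm F_\ast\B$ already exists, and (ii) assumes both tensor products are given by the explicit recipe of Propositions~\ref{prop:13}\eqref{item:1} and~\ref{prop:10}, which requires conical colimits in $\V\text-\cat{Cat}$ and $\W\text-\cat{Cat}$ preserved by the inclusions into the large $2$-categories, together with free enriched categories over both bases. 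None of these hypotheses appear in the proposition, so you have established only a special case. The paper argues purely from universal properties: the duoidal adjunction induces $F_\ast \dashv G_\ast$, hence $2$-natural isomorphisms $\cat{SESQ}_\W(F_\ast\A,F_\ast\B;\C)\cong\cat{SESQ}_\V(\A,\B;G_\ast\C)$; and because maps $FA\conv FB\to C$ in $\W$ correspond to maps $A\conv B\to GC$ in $\V$ (using strong $\conv$-monoidality of $F$), the bifunctoriality hexagon for $T\colon F_\ast\A,F_\ast\B\to\C$ transposes to that for $\tilde T\colon \A,\B\to G_\ast\C$, so the isomorphism restricts to $\cat{BIFUN}_\W(F_\ast\A,F_\ast\B;\C)\cong\cat{BIFUN}_\V(\A,\B;G_\ast\C)\cong \W\text-\cat{CAT}(F_\ast(\A\comm\B),\C)$. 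This exhibits $F_\ast(\A\comm\B)$ as the representing object directly, with no colimit bookkeeping.

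Within your restricted setting the outline is essentially workable, but one step is justified too loosely: the claim that the free $\V$-category functor is ``built out of iterated $\comp$-tensor products and coproducts of hom-objects'' presupposes that $\comp$ distributes over the relevant coproducts, which is not part of the paper's hypotheses (Proposition~\ref{prop:22} only assumes accessibility of $\comp$ and gives no such formula). The preservation of free enriched categories should instead be obtained by an adjointness argument: $F_\ast\circ\mathrm{Free}_\V$ and $\mathrm{Free}_\W\circ F_\ast$ are both left adjoint to the same composite, since $G$ being lax $\comp$-monoidal gives $U\circ G_\ast = G_\ast\circ U$ on the nose. Your identification of the parallel pairs via the compatibility of a strong duoidal $F$ with $\sigma$ and $\tau$ is fine, but note that the paper's transposition argument achieves the same compatibility check once and for all at the level of the hexagon, rather than inside a presentation of $\comm$ by generators and relations.
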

\begin{proof}
  The adjunction $F \dashv G$ induces a $2$-adjunction
  $F_\ast \dashv G_\ast \colon \W\text-\cat{Cat} \rightarrow \V\text-\cat{Cat}$, and so $2$-natural isomorphisms
  \begin{equation}
    \label{eq:46}
    \cat{SESQ}_\W(F_\ast\A,F_\ast\B;\C)\cong\cat{SESQ}_\V(\A,\B;G_\ast\C)\rlap{ .}
  \end{equation}
  We have, moreover, a bijection between maps
  $FA \conv FB \rightarrow C$ in $\W$ and ones
  $A \conv B \rightarrow GC$ in $\V$; under this bijection, the
  diagram~\eqref{eq:19} for a sesquifunctor
  $T \colon F_\ast \A, F_\ast \B \rightarrow \C$ transposes to the
  corresponding diagram for
  $\tilde T \colon \A, \B \rightarrow G_\ast \C$, and so the 
  isomorphisms~\eqref{eq:46} restrict to ones
  \begin{equation}
    \label{eq:60}
    \cat{BIFUN}_\W(F_\ast\A,F_\ast\B;\C)\cong\cat{BIFUN}_\V(\A,\B;G_\ast\C)\rlap{ .}
  \end{equation}
  Thus if $\A, \B$ in $\W\text-\cat{CAT}$ admit the tensor product
  $\A \comm \B$, then
  $\W\text-\cat{CAT}(F_\ast(\A\comm\B),\thg)$ is isomorphic to
  $\cat{BIFUN}(F_\ast\A,F_\ast\B;\thg)$, and so $F_\ast(\A \comm \B)$
  is a tensor product of $F_\ast(\A)$ and $F_\ast(\B)$. The second
  assertion can be easily deduced from the first and the construction
  of the associativity and unit contraints for $\comm$ as in
  Section~\ref{sec:2-monoidal-structure}.
\end{proof}
Note that, in the situation of this proposition, the adjunction
$F_\ast \dashv G_\ast \colon \W\text-\cat{Cat} \rightarrow
\V\text-\cat{Cat}$
becomes an adjunction of monoidal $2$-categories. In fact, an
alternative way of proving the last clause of the preceding
proposition would have been to run this argument backwards: by
exploiting the $2$-functoriality of the assignation
$\V \mapsto \V\text-\cat{Cat}$ from normal duoidal categories to
monoidal $2$-categories, we could deduce that any duoidal adjunction
is sent to an adjunction of monoidal $2$-categories, which
by~\cite{Kelly1974Doctrinal} must have a strong left adjoint.

\section{Commutativity: the one-object case}
\label{sec:one-object-case}
In this section, we indicate how our theory of commutativity
specialises to the case of one-object $\V$-categories---that is, of
$\comp$-monoids in $\V$.

\subsection{The basic notions}
\label{sec:basic-notions}
Given a $\comp$-monoid $A$ in the normal duoidal $\V$, we will write
$\Sigma A$ for the corresponding one-object $\V$-category. With this
convention, we see that a sesquifunctor
$T \colon \Sigma A, \Sigma B \rightarrow \Sigma C$ is given simply by
a cospan $f \colon A \rightarrow C \leftarrow B \colon g$ of
$\comp$-monoid morphisms. As anticipated in the introduction, such a
cospan will be said to \emph{commute}, in the sense of corresponding to a
bifunctor, just when the hexagon
\begin{equation}\label{eq:22}
    \cd[@!C@C-0.5em@R-0.7em]{
    & \sh{l}{0.5em} A \comp B \ar[r]^-{f \comp g} &
    \sh{r}{0.5em} C \comp C \ar[dr]^m \\ 
    A \conv B \ar[ur]^-{\sigma} \ar[dr]_-{\tau} & & & 
    C \\
    &\sh{l}{0.5em} B \comp A \ar[r]_-{g \comp f} &
    \sh{r}{0.5em}C \comp C \ar[ur]_m
  }
\end{equation}
commutes in $\V$. Correspondingly, an $n$-ary sesquifunctor
$\Sigma A_1, \dots, \Sigma A_n \rightarrow \Sigma B$ amounts to an
$n$-tuple of $\comp$-monoid morphisms $f_i \colon A_i \rightarrow B$,
and such an $n$-tuple commutes just when the cospan $(f_i, f_j)$ does
so for each $1 \leqslant i < j \leqslant n$.

More generally, the one-object case of a graph sesquimorphism
$A, B \rightarrow U \C$ in the sense of Definition~\ref{def:18} is
that of a cospan $f \colon A \rightarrow UC \leftarrow B \colon g$,
where $C$ is a $\comp$-monoid and $A, B$ are mere objects in $\V$. The
commutativity of such a cospan is the requirement that the
same~\eqref{eq:22} should commute in $\V$. In this context,
Proposition~\ref{prop:18} expresses that a cospan of this kind is
commutative precisely when the transpose cospan
$FA \rightarrow C \leftarrow FB$ of $\comp$-monoids is so.

Note that, if $T \colon \Sigma A, \Sigma B \rightarrow \Sigma C$
corresponds to a cospan
$f \colon A \rightarrow C \leftarrow B \colon g$, then
$T^c \colon \Sigma B, \Sigma A \rightarrow \Sigma C$ corresponds to
the reverse cospan $g \colon B \rightarrow C \leftarrow A \colon f$;
using this fact, we can fulfil a promise made in
Section~\ref{sec:bifunctors} above by exhibiting a $\V$ for which the
notion of bifunctor is not symmetric in its input arguments.

\begin{Ex}
  \label{ex:2}
  Let $\V$ be the category of positively graded $k$-vector spaces,
  equipped with the tensor product
\begin{equation}
  (X \otimes Y)_n = \sum_{n = r + s} X_r \otimes Y_s \qquad \text{and} \qquad 
  I_n =
  \begin{cases}
    k & \text{if $n = 0$;} \\ 0 & \text{otherwise.}
  \end{cases}
\end{equation}
If we fix $q \neq 0$ in $k$, there is as
in~\cite[Section~2.3]{Aguiar2010Monoidal} a braiding
$c_q \colon X \otimes Y \rightarrow Y \otimes X$ given on homogeneous
elements by $c_q(x \otimes y) = q^{rs}(y \otimes x)$ for $x \in X_r$
and $y \in Y_s$; using this, we may see $\V$ as a normal duoidal
category in the canonical way.

In this situation, a $\comp$-monoid is a graded $k$-algebra. If $C$ is
any graded $k$-algebra, and $A$ and $B$ are the graded $k$-algebras
freely generated by elements $a \in A_r$ and $b \in B_s$, then a
cospan of $\comp$-monoid maps
$f \colon A \rightarrow C \leftarrow B \colon g$, corresponding to a
sesquifunctor $T$, will commute just when $f(a)g(b) = q^{rs}g(b)f(a)$.
On the other hand, the cospan
$g \colon B \rightarrow C \leftarrow A \colon f$, corresponding to the
transpose sequifunctor $T^c$, will commute just when
$g(b)f(a) = q^{rs}f(a)g(b)$. In particular, if $q \neq \pm 1$, then
these two commutativities are, in general, distinct.
\end{Ex}

\subsection{Commuting tensor product and centralizers}
\label{sec:comm-tens-prod}
Whenever $\V$ has a terminal object, the set-of-objects functor
$\V\text-\cat{Cat} \rightarrow \cat{Set}$ has a right adjoint, and so
preserves colimits and sends $\fun$ to $\times$; it follows that
one-object $\V$-categories are closed under the commuting tensor
product as constructed in Section~\ref{sec:tensor-product-v}.

In fact, constructing the commuting tensor product of $\comp$-monoids
$A$ and $B$ does not require the full strength of the hypotheses of
Proposition~\ref{prop:10}: it suffices that free $\comp$-monoids
should exist and that $\cat{Mon}_\comp(\V)$ should admit finite
colimits. Indeed, one first forms the coproduct
$\iota_1 \colon A \rightarrow A+B \leftarrow B \colon \iota_2$ in
$\cat{Mon}_\comp(\V)$; then the parallel pair
$A \conv B \rightrightarrows U(A+B)$ in $\V$ given by the two sides of
the hexagon~\eqref{eq:22} for the cospan $(\iota_1, \iota_2)$; and
finally obtains the commuting tensor product of $A$ and $B$ as the
coequalizer in $\cat{Mon}_\comp(\V)$ of the transposed maps
$F(A \conv B) \rightrightarrows A+B$.

By contrast to the above, one-object $\V$-categories are not closed
under taking functor $\V$-categories; indeed, objects of either
internal hom $[\Sigma A, \Sigma B]_\ell$ or $[\Sigma A, \Sigma B]_r$
are arbitrary $\comp$-monoid morphisms $A \rightarrow B$. Nonetheless,
each endo-hom-object of $[\Sigma A, \Sigma B]_\ell$ or
$[\Sigma A, \Sigma B]_r$ is a $\comp$-monoid in $\V$ and in fact a
sub-$\comp$-monoid of $B$---and may be seen as providing a general
notion of \emph{centralizer}:

\begin{Defn}
  \label{def:19}
  Let $\V$ be a complete $\ast$-biclosed normal duoidal category. If
  $f \colon A \rightarrow B$ is a $\comp$-monoid morphism in $\V$,
  then the \emph{left centralizer} of $f$ is the sub-$\comp$-monoid
  $C_\ell(f) \defeq [\Sigma A, \Sigma B]_\ell(f,f)$ of $B$; the
  \emph{right centralizer} $C_r(f)$ is defined dually. The \emph{left}
  or \emph{right centre} of a $\comp$-monoid $A$ is the left or
  right centralizer of $1_A$.
\end{Defn}
Note that, when $\V$ is $\conv$-braided, the two notions of
centralizer and centre coincide by Proposition~\ref{prop:3}(iii), and
in this case, we drop the modifiers ``left'' and ``right''. Unwinding
the proof of Proposition~\ref{prop:16}, we find that the left
centralizer of $f \colon A \rightarrow B$ may be constructed as
follows. We take $g = 1_B$ in~\eqref{eq:22}, transpose both paths
under the adjunction $A \conv (\thg) \dashv [A, \thg]_\ell$ to obtain
a parallel pair $B \rightrightarrows [A, B]_\ell$, and take the
equalizer of this pair to obtain $C_\ell(f)$. The construction of
$C_r(f)$ is dual.

When $\V = \cat{Set}$, the centralizer of a monoid morphism
$f \colon N \rightarrow M$ is, as expected, the set
$\{m \in M : mn = nm \text{ for all $n \in N$}\}$. For a general $\V$,
our nomenclature is justified by the following result, which is an
immediate consequence of the universal characterisation of functor
$\V$-categories.
\begin{Prop}
  \label{prop:21}
  Let $\V$ be a complete $\conv$-biclosed normal duoidal category and
  let $f \colon A \rightarrow C \leftarrow B \colon g$ be a cospan of
  $\comp$-monoid morphisms. The following are equivalent:
  \begin{enumerate}[(i)]
  \item $(f,g)$ is a commuting cospan;
  \item $f$ factors through the left centralizer $C_\ell(g)
    \rightarrowtail C$;
  \item $g$ factors through the right centralizer $C_r(f)
    \rightarrowtail C$.
  \end{enumerate}
\end{Prop}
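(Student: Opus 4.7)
The plan is to specialize Proposition~\ref{prop:16} to one-object $\V$-categories. First I would observe that, under the identification of $\comp$-monoids in $\V$ with one-object $\V$-categories via $A \mapsto \Sigma A$, the hexagon~\eqref{eq:22} that defines commutativity of $(f,g)$ is literally the one-object case of the bifunctoriality diagram~\eqref{eq:19}; hence $(f,g)$ is a commuting cospan if and only if the associated sesquifunctor $T \colon \Sigma A, \Sigma B \to \Sigma C$ (with $T(\thg,\ast) = f$ and $T(\ast,\thg) = g$) is a bifunctor.

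For (i) $\Rightarrow$ (ii) and back, I would apply Proposition~\ref{prop:16} with $\B = \Sigma B$, $\C = \Sigma C$, $\A = \Sigma A$ to obtain a natural bijection between bifunctors $T \colon \Sigma A, \Sigma B \to \Sigma C$ and $\V$-functors $\bar T \colon \Sigma A \to [\Sigma B, \Sigma C]_\ell$. Unwinding the construction given there, $\bar T$ sends the unique object $\ast$ to $T(\ast, \thg) = g$, so lands in the endo-hom component $[\Sigma B, \Sigma C]_\ell(g,g) = C_\ell(g)$; and on the sole hom-object, the morphism $\bar T_{\ast\ast} \colon A \to C_\ell(g)$ is characterised by the universal property of the left end as the unique $\comp$-monoid map whose postcomposition with the canonical monomorphism $p_{g,g,\ast} \colon C_\ell(g) \rightarrowtail C$ recovers the family $T(\thg,\ast)_{\ast\ast} = f$. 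Thus the existence of a bifunctor with the prescribed $g$ amounts precisely to the factorization of $f$ through $C_\ell(g) \rightarrowtail C$, yielding (i) $\Leftrightarrow$ (ii).

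For (i) $\Leftrightarrow$ (iii) I would run the dual half of Proposition~\ref{prop:16}, obtaining a bijection between bifunctors $\Sigma A, \Sigma B \to \Sigma C$ and $\V$-functors $\Sigma B \to [\Sigma A, \Sigma C]_r$; such a $\V$-functor picks out $f$ as the image of $\ast$ and is determined by a $\comp$-monoid map $B \to C_r(f)$ whose composition with $C_r(f) \rightarrowtail C$ equals $g$. The main obstacle, such as it is, is purely bookkeeping: verifying that the description of $\bar T$ on homs provided in the proof of Proposition~\ref{prop:16} does indeed identify $\bar T_{\ast\ast}$ with the desired factorization. This is immediate once one recalls that the universal extranatural family defining the left end is monomorphic and encodes exactly the commutativity required by~\eqref{eq:22}.
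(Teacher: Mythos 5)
Your proposal is correct and follows exactly the route the paper intends: the paper offers no detailed argument, asserting only that the result is an immediate consequence of the universal characterisation of the functor $\V$-categories $[\B,\C]_\ell$ and $[\B,\C]_r$ from Proposition~\ref{prop:16}, which is precisely your specialisation to one-object $\V$-categories. Your unwinding of $\bar T$ on objects and homs (and the dual case for $[\Sigma A,\Sigma C]_r$) matches the construction in the proof of Proposition~\ref{prop:16}, so nothing essential is missing.
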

By adapting the proof of Proposition~\ref{prop:17}, we see that, if
$\V$ is complete $\conv$-biclosed normal duoidal, and all commuting
tensor products of $\comp$-monoids exist, then we obtain a monoidal
structure $\comm$ on the category $\cat{Mon}_\comp(\V)$ of
$\comp$-monoids in $\V$. This monoidal structure has the two properties
that (i) its unit object $I$ is initial; (ii) for each $A, B \in
\cat{Mon}_\comp(\V)$, the two maps
\begin{equation}
\label{eq:26}  \cd{A \ar[r]^-{\cong} & A \comm I \ar[r]^-{A \comm !} & A \comm B & I \comm B \ar[l]_-{{!} \comm B} & B \ar[l]_-{\cong}}
\end{equation}
are jointly epimorphic. Monoidal structures with these two
properties were studied in detail in~\cite[\S 2]{Janelidze2009Cover},
and characterised in terms of properties of the ``generalised commutation
relation'' on cospans that induces them.

\subsection{Commutative $\comp$-monoids}
\label{sec:commutativity-duoids}
In the one-object case, there is a natural definition of
\emph{commutative} $\comp$-monoid; we now give this together with a
number of alternative characterisations of the notion.
\begin{Defn}
  \label{def:20}
  Let $A$ be a $\comp$-monoid in the normal duoidal $\V$. We say that
  $A$ is \emph{commutative} if $1_A \colon A \rightarrow A
  \leftarrow A \colon 1_A$ is a commuting cospan; equivalently, if the
  following square commutes in $\V$:
  \begin{equation}
    \cd[@!C@-1em@C-1em]{
      & A \comp A \ar[dr]^-{m} \\
      A \conv A \ar[ur]^-{\sigma} \ar[dr]_-{\tau}  & & A\rlap{ .} \\
      & A \comp A \ar[ur]_-{m}
    }
  \end{equation}
\end{Defn}

Comparing this definition with Section~\ref{sec:bimonoids-duoids}, we
see that, in a normal duoidal category, both \emph{duoids} and
\emph{commutative $\comp$-monoids} are generalisations of commutative
monoids in a braided monoidal category. The following proposition
shows that, in fact, they are the same generalisation.
\begin{Prop}
  \label{prop:11}
  Let $\V$ be a normal duoidal category. The forgetful functor from
  duoids to $\comp$-monoids 
  $U \colon \cat{Duoid}(\V) \rightarrow \cat{Mon}_\comp(\V)$ is
  injective on objects and fully faithful, and its image comprises the
  commutative $\comp$-monoids. Moreover, if $\V$ is $\conv$-braided,
  then every duoid in $\V$ is $\conv$-commutative.
\end{Prop}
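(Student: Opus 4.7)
The plan is to show that a $\comp$-monoid $(A, e, m)$ underlies at most one duoid structure, and that such an extension exists precisely when $(A, e, m)$ is commutative. For uniqueness, observe first that if $(A, e, m, \iota, \nu)$ is any duoid, then the requirement that $e\colon I \to A$ be a $\conv$-monoid map forces $\iota = e \circ \upsilon$. The main claim is that $\nu$ must coincide with both $m \circ \sigma$ and $m \circ \tau$. To extract the first identity, I would precompose the duoid diagram \eqref{eq:7} with the map
\[
\phi = (1_A \comp e) \conv (e \comp 1_A)\colon A \conv A \xrightarrow{\cong} (A \comp I) \conv (I \comp A) \longrightarrow (A \comp A) \conv (A \comp A)\rlap{ .}
\]
The left-hand path of \eqref{eq:7} composed with $\phi$ collapses to $\nu$ because $m \circ (1 \comp e)$ and $m \circ (e \comp 1)$ are the $\comp$-unit isomorphisms. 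On the right-hand path, naturality of $\xi$ pushes $(1 \comp e) \conv (e \comp 1)$ through to $(1 \conv e) \comp (e \conv 1)$, and the maps $\nu \circ (1 \conv e)\colon A \conv I \to A$ and $\nu \circ (e \conv 1)\colon I \conv A \to A$ are---by the unit axiom for $(A, \iota, \nu)$ together with $e = \iota \circ \upsilon^{-1}$---precisely the canonical isomorphisms $A \conv I \cong A \conv J \cong A$ and $I \conv A \cong J \conv A \cong A$ appearing in the definition \eqref{eq:21} of $\sigma$. The right-hand path therefore reduces to $m \circ \sigma$. A symmetric calculation with $(e \comp 1) \conv (1 \comp e)$ in place of $\phi$ yields $\nu = m \circ \tau$; in particular $m \circ \sigma = m \circ \tau$, so $(A, e, m)$ is commutative, and the image of $U$ lies in the commutative $\comp$-monoids.

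Conversely, given a commutative $\comp$-monoid $(A, e, m)$, one defines $\iota \defeq e \circ \upsilon$ and $\nu \defeq m \circ \sigma = m \circ \tau$, and then verifies that this is a $\conv$-monoid for which $e$ is a $\conv$-monoid map and $m$ satisfies the duoid diagram \eqref{eq:7}. These are direct diagrammatic calculations: the unit laws for $\nu$ follow from normality, while the associativity of $\nu$ and the duoid axiom reduce to coherence equations built from $\xi$ and the linear distributivities of Section~\ref{sec:normality}. Full faithfulness of $U$ then comes for free: any $\comp$-monoid map $f\colon A \to B$ between duoids automatically satisfies $f \circ \iota_A = f \circ e_A \circ \upsilon = e_B \circ \upsilon = \iota_B$, and by naturality of $\sigma$,
\[
f \circ \nu_A = f \circ m_A \circ \sigma = m_B \circ (f \comp f) \circ \sigma = m_B \circ \sigma \circ (f \conv f) = \nu_B \circ (f \conv f)\rlap{ .}
\]

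For the final clause, diagram \eqref{eq:4} gives $\sigma_{A,A} \circ c = \tau_{A,A}$ whenever $\V$ is $\conv$-braided, so for any duoid we have $\nu \circ c = m \circ \sigma \circ c = m \circ \tau = \nu$, showing that $\nu$ is $\conv$-commutative. The main obstacle in this approach is the existence direction: checking that the prescription $\nu = m \circ \sigma$ really does yield a $\conv$-monoid structure on a commutative $(A, e, m)$ and that $m$ is a $\conv$-monoid map. Both associativity and the duoid axiom unfold into sizeable diagrams in which $\xi$ appears repeatedly, and the argument rests on the full package of normal duoidal axioms---in particular the compatibility of $\xi$ with the unit maps and with itself via the linear distributivities.
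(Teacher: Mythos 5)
Your uniqueness half is exactly the paper's argument: $\iota$ is forced to be $e\upsilon$, precomposing the duoid axiom \eqref{eq:7} with $(A \comp e) \conv (e \comp A)$ collapses the two sides to $\nu$ and $m\sigma$, the twin map $(e \comp A) \conv (A \comp e)$ gives $\nu = m\tau$, whence injectivity on objects, full faithfulness (via naturality of $\sigma$), containment of the image in the commutative $\comp$-monoids, and, using \eqref{eq:4}, the $\conv$-commutativity of duoids in the braided case. All of this is correct and matches the paper's proof step for step.

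The gap is in the converse direction, which you explicitly defer: showing that for a commutative $\comp$-monoid $(A,e,m)$ the prescription $\iota = e\upsilon$, $\nu = m\sigma$ really satisfies \eqref{eq:7}. You describe this as a ``coherence equation built from $\xi$ and the linear distributivities'', but it is not a pure coherence consequence: \eqref{eq:7} with $\nu = m\sigma$ fails for a general $\comp$-monoid (in the braided case it is, by the Eckmann--Hilton argument, precisely the assertion that $m$ is commutative), so the hypothesis $m\sigma = m\tau$ must be invoked at a specific point, and your sketch never says where or how. The paper supplies the missing mechanism: it first establishes two identities valid in \emph{any} normal duoidal category, writing $(\sigma \comp \sigma)\circ\xi$ and $\sigma$ itself as composites through $\delta^\ell_\ell$ and $\delta^r_r$ which differ only in a single interior occurrence of $\tau$ versus $\sigma$; then, taking all four objects to be $A$ and postcomposing with the quaternary multiplication $m.(m\comp A).((A \comp m)\comp A)$, commutativity makes the two composites agree, and these composites are exactly the two sides of \eqref{eq:7}. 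Without this (or some substitute argument isolating the use of commutativity), the claim that the image of $U$ comprises \emph{all} commutative $\comp$-monoids remains unproved; the other deferred checks (that $(A, e\upsilon, m\sigma)$ is a $\conv$-monoid and that $e$ is a $\conv$-monoid map) are indeed routine, and the paper treats them as such.
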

\begin{proof}
  Let the maps
  $e \colon \I \rightarrow A \leftarrow A \comp A \colon m$ and
  $\iota \colon \J \rightarrow A \leftarrow A \conv A \colon \nu$
  exhibit $A$ as a duoid. Since $e \colon \I \rightarrow A$ is a map
  of $\conv$-monoids, we have
  $\iota = e\upsilon \colon \J \rightarrow \I \rightarrow A$ and so
  $\iota$ is determined by $e$. Moreover, by precomposing the
  axiom~\eqref{eq:7} by the map
  \begin{equation}\label{eq:6}
    A \conv A \cong  (A \comp \I) \conv (\I \comp A) \xrightarrow{(A \comp e) \conv (e \comp
      A)} (A \comp A) \conv (A \comp A)\rlap{ ,}
  \end{equation}
  the lower and upper sides become
  $\nu \colon A \conv A \rightarrow A$ and
  $m\sigma \colon A \conv A \rightarrow A \comp A \rightarrow A$
  respectively, so that $\nu$ is determined by $m$. Thus $U$ is
  injective on objects, and the formulae $\iota = e \upsilon$ and
  $\nu = m \sigma$ now imply easily that it is fully faithful too.
  
  Replacing $(A \comp e) \conv (e \comp A)$ by
  $(e \comp A) \conv (A \comp e)$ in~\eqref{eq:6}, we see that any
  duoid $A$ verifies $\nu = m \tau$ as well as
  $\nu = m \sigma$; so $m\tau = m\sigma$, which is the condition for
  the underlying $\comp$-monoid to be commutative. Moreover, if $\V$
  is $\conv$-braided, then by~\eqref{eq:4} we have
  $\nu = m \sigma = m \tau c = \nu c \colon A \conv A \rightarrow A$,
  so that the duoid $A$ is necessarily $\conv$-commutative.

  All that remains is to show that that \emph{every} commutative
  $\comp$-monoid $A$  is in the image of
  $U$. Of course, the $\conv$-monoid structure on $A$ must be
  given by $e \upsilon \colon \J \rightarrow A$ and
  $m \sigma = m \tau \colon A \conv A \rightarrow A$; it is now direct
  that this is an $\conv$-monoid, and that $e \colon \I \rightarrow A$
  is a map of $\conv$-monoids. To verify~\eqref{eq:7}, we first verify
  that the following diagrams commute in any normal duoidal category:
  \begin{equation}
    \cd[@C+0.5em]{
      (A \comp B) \conv (C \comp D) \ar[d]_-{\xi} \ar[r]^-{\delta^{\ell}_{\ell}} &
      ((A \comp B) \conv C) \comp D \ar[r]^-{\delta^{r}_{r} \comp D} & 
      (A \comp (B \conv C)) \comp D \ar[d]^-{(A \comp \tau) \comp D} \\
      (A \conv C) \comp (B \conv D) \ar[r]^-{\sigma \comp \sigma} &
      (A \comp C) \comp (B \comp D) \ar[r]^-{\cong} &
      (A \comp (C \comp B)) \comp D
    }
  \end{equation}
  \begin{equation}
    \cd[@C+0.5em]{
      (A \comp B) \conv (C \comp D) \ar@{=}[d] \ar[r]^-{\delta^{\ell}_{\ell}} &
      ((A \comp B) \conv C) \comp D \ar[r]^-{\delta^{r}_{r} \comp D} & 
      (A \comp (B \conv C)) \comp D \ar[d]^-{(A \comp \sigma) \comp D} \\
      (A \comp B) \conv (C \comp D) \ar[r]^-{\sigma} &
      (A \comp B) \comp (C \comp D) \ar[r]^-{\cong}&
      (A \comp (B \comp C)) \comp D\rlap{ .}
    }
  \end{equation}
  On taking $A=B=C=D$ and postcomposing with the quaternary
  multiplication map
  $m.(m \comp A).((A \comp m) \comp A) \colon (A \comp (A \comp A))
  \comp A \rightarrow A$,
  commutativity of $A$ implies that the two upper paths become
  equal; whence the two lower paths do too. But these two paths are easily
  seen to be the two sides of~\eqref{eq:7}, which thus commutes.
\end{proof}
In the situation where we have the commuting monoidal structure on
$\comp$-monoids, we may give a further characterisation of the
commutative $\comp$-monoids.

\begin{Prop}
\label{prop:8}
Let $\V$ be a normal duoidal category for which $\cat{Mon}_\comp(\V)$
admits the commuting monoidal structure $\comm$. The forgetful functor
$U \colon \cat{Mon}_\comm(\cat{Mon}_\comp(\V)) \rightarrow
\cat{Mon}_\comp(\V)$
is injective on objects and fully faithful, and its image comprises
the commutative $\comp$-monoids. If $\V$ is $\conv$-braided, then
every $\comm$-monoid in $\cat{Mon}_\comp(\V)$ is commutative.
\end{Prop}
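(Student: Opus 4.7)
The approach is to use the universal property of $\comm$ to translate $\comm$-monoid structures on a $\comp$-monoid $A$ into commuting cospans of $\comp$-monoid maps $A \rightrightarrows A$, then to observe that the unit laws collapse the resulting cospan to $(1_A, 1_A)$---which is precisely the commutativity of Definition~\ref{def:20}. Since $\I$ is initial in $\cat{Mon}_\comp(\V)$ and, by property (i) of the monoidal structure discussed after~\eqref{eq:26}, also serves as the unit of $\comm$, the unit of any $\comm$-monoid structure on $A$ must coincide with the canonical $\comp$-monoid unit $e \colon \I \to A$. Moreover, the universal bifunctor property identifies $\comp$-monoid maps $m' \colon A \comm A \to A$ with commuting cospans $(p, q) \colon A \to A \leftarrow A$ of $\comp$-monoid maps, where $p = m' \circ i_1$ and $q = m' \circ i_2$ and $i_1, i_2 \colon A \to A \comm A$ are the canonical maps of~\eqref{eq:26}.

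By inspection of~\eqref{eq:26} we have $i_1 = (1_A \comm e) \circ \rho^{-1}$ and $i_2 = (e \comm 1_A) \circ \lambda^{-1}$, so the right and left unit laws $m' \circ (1 \comm e) = \rho$ and $m' \circ (e \comm 1) = \lambda$ translate exactly to $p = 1_A$ and $q = 1_A$. Hence a $\comm$-monoid structure on $A$ exists precisely when $(1_A, 1_A)$ is a commuting cospan---that is, precisely when $A$ is commutative---and when it exists it is unique; this yields both injectivity on objects and the image characterisation. Faithfulness of $U$ is automatic, and fullness follows because, for any $\comp$-monoid morphism $f \colon A \to B$ between commutative $\comp$-monoids, the two sides of $f \circ m'_A = m'_B \circ (f \comm f)$ both classify the cospan $(f, f) \colon A \to B \leftarrow A$ of $\comp$-monoid maps, which is commuting by Proposition~\ref{prop:3}(i) applied to the bifunctor classified by $m'_B$ precomposed with $\Sigma f$ in each slot.

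For associativity, I would show that $(A \comm A) \comm A$ represents commuting \emph{triples} of $\comp$-monoid maps into a target $\comp$-monoid, by an iterated use of the universal property along the lines of the ternary argument in the proof of Proposition~\ref{prop:17}. Both associativity composites $m' \circ (m' \comm 1)$ and $m' \circ (1 \comm m') \circ \alpha$ are then classified by the triple $(1_A, 1_A, 1_A)$ and thus agree. For the final clause, if $\V$ is $\conv$-braided then $\comm$ becomes symmetric monoidal on $\cat{Mon}_\comp(\V)$ (by the $\comp$-monoid analogue of Proposition~\ref{prop:17}), and $m' \circ \sigma_{A,A}$ is classified by the flipped cospan $(q, p) = (1_A, 1_A) = (p, q)$, so $m' \circ \sigma_{A,A} = m'$ and the $\comm$-monoid is commutative in the symmetric sense.

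The main obstacle is the higher-arity representability of $(A \comm A) \comm A$ needed for associativity: Proposition~\ref{prop:8} assumes only existence of the binary commuting tensor product on $\cat{Mon}_\comp(\V)$ and not the full hypotheses of Proposition~\ref{prop:17}, so the ternary universal property has to be re-established under these weaker assumptions. The argument still goes through using the joint epimorphism of the canonical pair from~\eqref{eq:26} iterated twice, but it requires a careful diagram chase rather than a direct appeal to the representability results of Section~\ref{sec:2-monoidal-structure}.
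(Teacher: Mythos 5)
Your proposal is correct and takes essentially the same route as the paper's proof (which reproduces Theorem~2.8.2 of the cited Janelidze--Janelidze paper): the unit is forced by initiality of $\I$, the unit laws force the multiplication to be the map classifying the identity cospan $(1_A,1_A)$ so that a $\comm$-monoid structure exists exactly when $A$ is a commutative $\comp$-monoid, fullness and the $\conv$-braided clause follow by precomposing with the jointly epimorphic pair of~\eqref{eq:26}, and associativity by precomposing with the three jointly epimorphic maps $A \rightarrow A \comm A \comm A$. Your worry about ternary representability is unnecessary---the paper never invokes it, and your fallback, the iterated joint-epimorphism diagram chase using only the given associativity constraint of the monoidal structure, is precisely what the paper does.
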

\begin{proof}
  This relies solely on the two properties of the $\comm$-monoidal
  structure on $\cat{Mon}_\comp(\V)$ noted in
  Section~\ref{sec:comm-tens-prod} above: that the unit is initial,
  and that the maps~\eqref{eq:26} are jointly epimorphic. A full proof
  of the result from these assumptions is given
  in~\cite[Theorem~2.8.2]{Janelidze2009Cover}; we reproduce it here
  for completeness. Of course, the unit of any $\comm$-monoid $A$ in
  $\cat{Mon}_\comp(\V)$ is necessarily the unique map
  $\eta \colon I \rightarrow A$ from the initial object, and by the
  epimorphicity of~\eqref{eq:26}, the multiplication $\mu$ must be the
  unique map fitting into a diagram
  \begin{equation}\label{eq:3}
    \cd{
      A \ar[d]_-{\cong} \ar[r]^-{1} & A & A \ar[d]^-{\cong} \ar[l]_-{1}  \\
      I \comm A \ar[r]_-{\eta \comm A} & A \comm A \ar@{-->}[u]^-{\mu} & A \comm I \ar[l]^-{A \comm \eta}\rlap{ .}
    }
  \end{equation}
  So $U$ is injective on objects, and clearly faithful; for fullness,
  we observe that any map $f \colon A \rightarrow B$ of $\comp$-monoids between
  $\comm$-monoids must satisfy
  $\mu.(f \comm f) = f.\mu \colon A \comm A \rightarrow B$, since both
  sides precompose with the jointly epimorphic cospan
  $A \rightarrow A \comm A \leftarrow A$ to yield the cospan
  $f \colon A \rightarrow B \leftarrow A \colon f$. Commutativity
  of~\eqref{eq:3}, together with the universal property of $\comm$,
  shows that any $\comm$-monoid in $\cat{Mon}_\comp(\V)$ has a
  commutative underlying $\comp$-monoid; it remains to show that every
  commutative $\comp$-monoid lies in the image of $U$. The
  commutativity and the universal property of $\comm$ yields the
  existence of a map $\mu$ as in~\eqref{eq:3} for which the unique map
  $\eta \colon I \rightarrow A$ is a unit; to show that
  $(A, \eta, \mu)$ is a $\comm$-monoid, it thus remains to check the
  associativity axiom
  $\mu . (\mu \comm A) = \mu . (A \comm \mu) \colon A \comm A \comm A
  \rightarrow A$;
  which is clear on precomposition with the three jointly epimorphic
  maps $A \rightarrow A \comm A \comm A$.

  Finally, when $\V$ is $\conv$-braided, the $\comm$-tensor product on
  $\cat{Mon}_\comp(\V)$ admits a symmetry $c$; now for any
  $\comm$-monoid $(A, \eta, \mu)$, we have that $\mu = \mu c \colon A
  \comm A \rightarrow A$, since both sides precompose with the jointly
  epimorphic cospan $A \rightarrow A \comm A \leftarrow A$ to yield
  the identity cospan $1_A \colon A \rightarrow A \leftarrow A \colon 1_A$.
\end{proof}

\section{Example: algebraic theories}
\label{sec:exampl-algebr-theor}
This concludes our development of the general theory of commutativity;
in the remainder of the paper, we apply it to a range of examples,
starting in this section with (finitary) \emph{algebraic theories}. In
this context, there are well-known notions of \emph{commuting tensor
  product} of theories and of \emph{commutative} algebraic
theory~\cite{Freyd1966Algebra}; our objective is to see
how these arise as particular instances of our general notions.

\subsection{Algebraic theories and commutativity}
\label{sec:algebr-theor-comm}
There are many ways of presenting algebraic
theories---see~\cite{Hyland2007The-Category} for an overview---but for
our purposes, we take the following perspective. We write $\F$ for the
presheaf category $[\mathbb F, \cat{Set}]$, where $\mathbb F$ is the
category of finite cardinals and mappings; now restriction and left
Kan extension along the inclusion
$I \colon \mathbb F \rightarrow \cat{Set}$ exhibits $\F$ as equivalent
to the category $\cat{End}_\omega(\cat{Set})$ of
filtered-colimit-preserving endofunctors of $\cat{Set}$, and under
this equivalence, the composition monoidal structure on
$\cat{End}_\omega(\cat{Set})$ transports to the 
\emph{substitution} monoidal structure on $\F$, whose unit object is the
inclusion $I$, and whose binary tensor is given by
\begin{equation}
  \label{eq:29}
  (A \comp B)(n) = \textstyle\int^{m \in \cat F} Am \times (Bn)^m\rlap{ .}
\end{equation}

By an \emph{algebraic theory}, we mean a $\comp$-monoid
$(T, \eta, \mu)$ in $\F$. We call elements $\alpha \in Tn$ the
\emph{$n$-ary operations} of $T$, write
 $\pi_0, \dots, \pi_{n-1} \in Tn$ for the elements in the
image of $\eta_n \colon n \rightarrow Tn$, and, given $f \in Tm$ and
$g_1, \dots, g_n \in Tn$, write $f (g_1, \dots, g_n)$ for the image
under
$\mu_n \colon \int^{m \in \mathbb F} Tm \times (Tn)^m \rightarrow Tn$
of the element $(f, g_1, \dots, g_n)$.


\begin{Defn}
  \label{def:9}
  Let $\E$ be a category with finite powers, and let $T$ be an
  algebraic theory. A \emph{model} of $T$ in $\E$ is an
  object $X \in \E$ together with functions
  $\dbr{\alpha} \colon X^n \rightarrow X$ for each $\alpha \in Tn$
  such that $\dbr{\pi_i} = \pi_i \colon X^n \rightarrow X$ for each $0
  \leqslant i < n$, and such that
  \begin{equation}
    \dbr{f(g_1, \dots, g_n)} = X^m \xrightarrow{(\dbr{g_1}, \dots, \dbr{g_n})} X^n \xrightarrow{\dbr{f}} X
  \end{equation}
  for all $f \in Tn$ and $g_1, \dots, g_n \in Tm$. A
  \emph{homomorphism of models} from $X$ to $Y$ is a map $f \colon X
  \rightarrow Y$ in $\E$ such that $f.\dbr{\alpha}_X =
  \dbr{\alpha}_Y.f^n$ for all $\alpha \in Tn$. We write $\cat{Mod}(T;
  \E)$ for the category of $T$-models and model homomorphisms in $\E$.
\end{Defn}
For any $\E$ with finite powers, it is easy to see that the category
$\cat{Mod}(T; \E)$ of the preceding definition again has finite
powers, created by the evident forgetful functor to $\E$. Any
finite-power-preserving functor $\E \rightarrow \G$ induces a
finite-power-preserving functor
$\cat{Mod}(T; \E) \rightarrow \cat{Mod}(T; \G)$; similarly, any
morphism $S \rightarrow T$ of finitary monads induces a
finite-power-preserving functor
$\cat{Mod}(T; \E) \rightarrow \cat{Mod}(S; \E)$ commuting with the
forgetful functors to $\E$.

\begin{Defn}
  \label{def:21}
  Let $S$ and $T$ be algebraic theories and let $\E$ be a
  category with finite powers. By a \emph{commuting $S$-$T$-model} in
  $\E$, we mean a $T$-model in $\cat{Mod}(S; \E)$, or equivalently, an
  $S$-model in $\cat{Mod}(T; \E)$. We write $\cat{Mod}(S, T; \E)$ for
  either of the isomorphic categories $\cat{Mod}(S; \cat{Mod}(T; \E)) \cong \cat{Mod}(T;
  \cat{Mod}(S; \E))$.
\end{Defn}
With the above definitions in place, we can now define the (commuting)
tensor product of theories, and the notion of
commutative algebraic theory.

\begin{Defn}
  \label{def:22}
  The \emph{tensor product} of algebraic theories $S$ and $T$ is a
  theory $U$ equipped with an isomorphism
  $\cat{Mod}(U; \cat{Set}) \cong \cat{Mod}(S, T; \cat{Set})$ over
  $\cat{Set}$. An algebraic theory $T$ is \emph{commutative} if the
  diagonal
  $\cat{Mod}(T; \cat{Set}) \rightarrow \cat{Mod}(T; \cat{Set})
  \times_\cat{Set} \cat{Mod}(T; \cat{Set})$
  factors through $\cat{Mod}(T, T; \cat{Set})$.
\end{Defn}

\subsection{The duoidal category $\F$}
\label{sec:duoid-categ-finit}
Our goal now is to make the substitution monoidal structure on $\F$
into part of a normal duoidal one in such a way that our general
commutativity notions reduce to those of Definition~\ref{def:22}; to
obtain the second tensor product, we make use of \emph{Day
  convolution}~\cite{Day1970Construction}.

\begin{Defn}
  \label{def:11}
  Suppose that $\A$ and $\V$ are monoidal categories.
  \begin{enumerate}[(i)]
  \item A \emph{convolution tensor} of
  $F, G \in [\A, \V]$ is a
  functor $F \conv G \in [\A, \V]$ equipped with a universal natural family of
  maps
  $u_{AB} \colon FA \otimes GB \rightarrow (F \conv G)(A \otimes B)$;
  this means that any natural family
  $k_{AB} \colon FA \otimes GB \rightarrow H(A \otimes B)$ is of the
  form $k_{AB} = \bar k_{A \otimes B}.u_{AB}$ for a unique 
  map $\bar k \colon F \conv G \rightarrow H$ in $[\A, \V]$.\vskip0.25\baselineskip 
\item A \emph{nullary convolution tensor} is a functor
  $\J \in [\A, \V]$ together with a universal map
  $j \colon \I \rightarrow J(I)$; this means that each map
  $k \colon \I \rightarrow H(I)$ in $\V$ is of the form $\bar k_\I . j$ for a
  unique $\bar k \colon \J \rightarrow H$ in $[\A, \V]$.
\end{enumerate}
\end{Defn}
When $\A$ is small, and $\V$ is biclosed, complete and cocomplete, all
convolution tensors exist, and underlie a biclosed monoidal structure
on $[\A, \V]$, which will be braided or symmetric whenever the
monoidal structures on  $\A$ and $\V$ are so.
The unit of this monoidal structure is the copower
$\J = \A(\I, \thg) \cdot \I$, while binary tensors and internal homs are
given by the formulae:
\begin{equation}\label{eq:9}
  \begin{gathered}
  F \conv G = \textstyle\int^{A,B \in \A} \A(A \otimes B, \thg) \cdot FA \otimes GB\\ 
  [F,G]_\ell = \textstyle\int_{A\in \A} [FA, G(A \otimes \thg)]_\ell \quad \text{and}\quad 
  [F,G]_r = \textstyle\int_{B \in \A} [FB, G(\thg \otimes B)]_r \rlap{ .}
  \end{gathered}
\end{equation}

In the case of $\F = [\mathbb F, \cat{Set}]$, applying Day convolution
with respect to product in both $\mathbb F$ and $\cat{Set}$ yields a symmetric monoidal
closed structure $(\conv, \J)$ on $\F$.
\begin{Prop}
  \label{prop:9}
  $(\F, \conv, \J, \comp, \I)$ is a
  normal duoidal
  category.
\end{Prop}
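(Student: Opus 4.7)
The plan is to identify the unit comparisons (which turn out to be isomorphisms, giving normality and much simplifying the rest of the proof), to construct the interchange map $\xi$ explicitly, and then to verify the remaining duoidal coherence axioms.

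For the unit comparisons, the Day-convolution unit $\J = \mathbb{F}(1, \thg) \cdot \{\ast\}$ is the functor $\mathbb{F} \to \cat{Set}$ sending $n$ to $\mathbb{F}(1, n) = n$; this coincides with the substitution unit $\I$, the inclusion $\mathbb{F} \hookrightarrow \cat{Set}$. This identification provides a canonical isomorphism $\upsilon \colon \J \xrightarrow{\sim} \I$; by the remark after Definition~\ref{def:2}, $\mu \colon \I \conv \I \to \I$ and $\gamma \colon \J \to \J \comp \J$ are then automatically iso as well (specifically, $\mu$ is the representable-level iso $\mathbb{F}(1, \thg) \conv \mathbb{F}(1, \thg) \cong \mathbb{F}(1 \times 1, \thg) = \mathbb{F}(1, \thg)$, and $\gamma$ is a unit iso for $\comp$). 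Hence once the duoidal structure is in place, normality is automatic.

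For the interchange, by the universal property of Day convolution the map $\xi \colon (X \comp Y) \conv (Z \comp W) \to (X \conv Z) \comp (Y \conv W)$ is equivalent data to a natural family
\begin{equation*}
(X \comp Y)(p) \times (Z \comp W)(q) \longrightarrow \bigl((X \conv Z) \comp (Y \conv W)\bigr)(p \times q)
\end{equation*}
in $p, q \in \mathbb{F}$. A typical representative of the source is a tuple $(x, (y_1, \ldots, y_a), z, (w_1, \ldots, w_b))$ with $x \in X(a)$, $z \in Z(b)$, $y_i \in Y(p)$, $w_j \in W(q)$. Writing $u_{X, Z} \colon X(a) \times Z(b) \to (X \conv Z)(ab)$ and $u_{Y, W} \colon Y(p) \times W(q) \to (Y \conv W)(pq)$ for the universal Day-convolution pairings, one sends such a tuple to the representative of the target whose outer part is $u_{X, Z}(x, z) \in (X \conv Z)(ab)$ and whose inner $ab$-tuple has $(i, j)$-th component $u_{Y, W}(y_i, w_j) \in (Y \conv W)(pq)$. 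Well-definedness on coend classes and naturality in all four arguments are routine. Alternatively, one may first reduce by cocontinuity of $\conv$ in each variable and of $\comp$ in its first variable to the case $X = \mathbb{F}(a, \thg)$, $Z = \mathbb{F}(b, \thg)$ representable, whereupon $\xi$ specialises to a map $Y^a \conv W^b \to (Y \conv W)^{ab}$ between pointwise powers, defined by the same formula.

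It remains to verify the duoidal axioms, which split into (a) compatibility of $\xi$ with the associators and unitors of both $\comp$ and $\conv$, and (b) the equations making $(\I, \upsilon, \mu)$ a $\conv$-monoid and $(\J, \upsilon, \gamma)$ a $\comp$-comonoid. Group (b) trivialises because $\upsilon$, $\mu$, $\gamma$ are isomorphisms and the relevant diagrams collapse to canonical unit identities. Group (a), unwound on representatives using the explicit formula for $\xi$, reduces each axiom to a structural identity about cartesian product in $\mathbb{F}$ and in $\cat{Set}$: associativity, unitality, and the bifunctoriality of product used to reshuffle indices. The main obstacle is not conceptual but combinatorial --- tracking elements through the several nested coends, power operations, and Day-convolution pairings requires care --- but no new ideas are needed beyond the formula for $\xi$. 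Once (a) and (b) are in place, normality follows immediately from the invertibility of $\upsilon$ already established.
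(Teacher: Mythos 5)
Your proof is correct and follows the same overall strategy as the paper's: identify $\J \cong \I$ via the evaluation/Yoneda isomorphism $\upsilon$ (so that normality is automatic once the duoidal structure is in place), construct the interchange $\xi$, and leave the coherence axioms to a routine verification via the universal property of convolution. The one point of divergence is how $\xi$ is obtained: the paper first transports both tensors across the equivalence $\F \simeq \cat{End}_\omega(\cat{Set})$ and defines $\xi$ as the composite $FG(A)\times HK(B)\xrightarrow{u_{GA,KB}}(F\conv H)(GA\times KB)\xrightarrow{(F\conv H)(u_{A,B})}(F\conv H)(G\conv K)(A\times B)$, so that naturality and the duoidal axioms are delegated to the universal property of convolution, whereas you write $\xi$ directly on coend representatives inside $[\mathbb F,\cat{Set}]$. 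Your explicit formula --- outer component $u_{X,Z}(x,z)$ and inner $ab$-tuple with $(i,j)$-entry $u_{Y,W}(y_i,w_j)$ --- is exactly the element-level unwinding of the paper's composite (and agrees with the explicit description the paper later uses in Section~\ref{sec:explicit-formulae} to compute $\sigma$ and $\tau$), so the two constructions yield the same map. The trade-off is minor: the endofunctor route buys a cleaner, largely formula-free check of the axioms, while your direct route --- together with the reduction to representables in the outer variables, which correctly uses cocontinuity of $\comp$ only in its first argument --- keeps everything in the presheaf category at the cost of the coend bookkeeping you acknowledge; both verifications are left at a comparable level of detail to the paper's own.
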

\begin{proof}
  Since $\J = \cat{Set}(1, \thg) \cdot 1$ and $\I$ is the inclusion
  functor $\mathbb F \rightarrow \cat{Set}$, the evaluation maps
  $\cat{Set}(1, X) \cdot 1 \rightarrow X$ provide an isomorphism
  $\upsilon \colon \J \rightarrow \I$ of unit objects; the other unit
  structure maps $\mu \colon \I \conv \I \rightarrow \I$ and
  $\gamma \colon J \rightarrow J \comp J$ are determined uniquely from
  this, and it remains only to give the interchange maps
  $\xi \colon (F \comp G) \conv (H \comp K) \rightarrow (F \conv H)
  \comp (G \conv K)$.
  
  We have already observed that the $\comp$-monoidal structure on
  $\F$ transports to give composition in
  $\cat{End}_\omega(\cat{Set})$; on the other hand, for any
  $F, G, H \in \cat{End}_\omega(\cat{Set})$, each natural family
  $FA \times GB \rightarrow H(A \times B)$ is uniquely determined by
  its components at finite cardinals $A$ and $B$, and it follows that
  the $\conv$-tensor product on $\F$ transports to
  convolution in
  $\cat{End}_\omega(\cat{Set}) \subset [\cat{Set}, \cat{Set}]$.
  Consequently, to give the maps $\xi$ it suffices to give natural
  families of maps
  $FG(A) \times HK(B) \rightarrow (F \conv H)(G \conv K)(A \times B)$
  for all finitary endofunctors $F, G, H, K$ of $\cat{Set}$. We obtain
  these as the composites
  \begin{equation}
    FG(A) \times HK(B) \xrightarrow{\!\!u_{GA, KB}\!\!} 
    (F \conv H)(GA \times KB) \xrightarrow{\!\!(F\conv H)(u_{A,B})\!\!} 
    (F \conv H)(G \conv K)(A \times B)\rlap{ .}
  \end{equation}
  The axioms for a duoidal category are now easily verified by
  exploiting the universal property of convolution.
\end{proof}
Applying our general framework to the normal duoidal $\F$ thus yields
a theory of commutativity for algebraic theories (=~$\comp$-monoids in
$\F$); in particular, we have notions of commuting tensor product of
theories, and of commutative algebraic theory. In order to identify
these notions with those of Definition~\ref{def:22}, we will first
identify categories of models in our framework as certain \emph{functor
  $\F$-categories}.

\subsection{$\F$-enriched category theory}
\label{sec:catend_fc-enrich-cat}
The theory of categories enriched over $(\F, \comp, \I)$ was the
object of study of~\cite{Garner2014Lawvere}; one of its main results
identifies categories $\E$ with finite powers as $\F$-enriched
categories admitting certain enriched \emph{absolute
  colimits}~\cite{Street1983Absolute}. For our purposes, the salient
points of this result are summarised by:
\begin{Prop}
  \label{prop:1}
  To each category $\E$ with finite powers we may associate an
  $\F$-category $\underline \E$ with the
  same objects as $\E$ and hom-objects given by:
  \begin{equation}
    \underline \E(X, Y)(n) = \E(X^n, Y)\rlap{ .}
  \end{equation}
  The assignation $\E \mapsto \underline \E$ underlies a
  $2$-fully faithful embedding
  $\cat{CAT}_{fp} \rightarrow \F\text-\cat{CAT}$
  of the $2$-category of categories with finite powers and
  power-preserving functors into the $2$-category of
  $\F$-enriched categories.
\end{Prop}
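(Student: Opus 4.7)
The plan is to proceed in three stages. First I make $\underline\E$ into an $\F$-category: composition $\underline\E(Y,Z) \comp \underline\E(X,Y) \to \underline\E(X,Z)$ is, by the universal property of the coend defining $\comp$, specified by a dinatural family of functions $\E(Y^m, Z) \times \E(X^n, Y)^m \to \E(X^n, Z)$, and I take $(g; f_1, \ldots, f_m) \mapsto g \circ \langle f_1, \ldots, f_m\rangle$ using the universal property of the power $Y^m$. The identity $I \to \underline\E(X,X)$ is the natural family $n \to \E(X^n, X)$ sending $i \mapsto \pi_i^X$. Associativity and unit laws reduce, via the universal properties of coends and of powers, to routine bookkeeping.

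Next I extend $\E \mapsto \underline\E$ to a 2-functor. On a power-preserving $F \colon \E \to \G$, the $\F$-functor $\underline F$ has object map $F$ and components at $n$
\[
\underline F_{X,Y,n} \colon \E(X^n, Y) \xrightarrow{F} \G(F(X^n), FY) \xrightarrow{\sim} \G((FX)^n, FY)
\]
using the power-comparison isomorphism in $\G$. On a natural transformation $\alpha \colon F \Rightarrow G$, the component $\underline\alpha_X \colon I \to \underline\G(FX, GX)$ at $n$ sends $i \mapsto \alpha_X \circ \pi_i^{FX}$. Verification that these satisfy the $\F$-enriched functoriality and naturality axioms, and respect the composition and identities of $\cat{CAT}_{fp}$, is direct.

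The substance of the proposition lies in 2-fully faithfulness. From an $\F$-functor $T \colon \underline\E \to \underline\G$ I extract an ordinary functor $\Phi$ with $\Phi := T$ on objects and $\Phi_{X,Y} := T_{X,Y,1} \colon \E(X,Y) \to \G(\Phi X, \Phi Y)$, and $\F$-functoriality evaluated at $n = 1$ forces $\Phi$ to be a functor. The critical step is that $\Phi$ preserves finite powers: the unit axiom for $T$ forces $T_{X,X,n}$ to send each $\pi_i^X \in \E(X^n, X)$ to the $i$th projection of $(\Phi X)^n$ in $\G$; and applying $\F$-functoriality of $T$ to the composition in $\underline\E$ expressing $f = \langle \pi_i^X \circ f\rangle_i$ for $f \colon Y^m \to X^n$, one deduces that the family $(\Phi\pi_i^X)_i \colon \Phi(X^n) \to \Phi X$ is universal, i.e., exhibits $\Phi(X^n)$ as a power $(\Phi X)^n$ in $\G$. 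Once power-preservation is in hand, the equality $T = \underline\Phi$ falls out by reading off the same decomposition of a general $f \in \E(X^n, Y)$, and bijection at the level of $\F$-natural transformations reduces analogously to their $n = 1$ components, with the higher-$n$ components recovered from those at $n = 1$ by composition with projections.

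The hard step will be precisely the power-preservation argument: it is the only place where the specific combinatorics of the substitution monoidal product $\comp$ interact nontrivially with the power structure in $\E$, and it requires carefully translating between the two ways a projection $\pi_i^X$ appears inside $\underline\E$, namely as an element of $\underline\E(X, X)(n)$ picked out by the unit and as an element of $\underline\E(X^n, X)(1)$ acted on by $T_{X^n, X, 1} = \Phi$. Every other step is formal manipulation of coends, universal properties, and the standard pattern for recognising enriched-functorial data as ordinary-functorial data.
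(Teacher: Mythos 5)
Your construction of $\underline{\E}$ agrees exactly with the paper's (composition induced by $(f,g_1,\dots,g_m)\mapsto f\circ\langle g_1,\dots,g_m\rangle$, identities picking out the projections), but the paper goes no further: for $2$-functoriality and $2$-fully-faithfulness it simply refers to \cite[Proposition~3.8]{Garner2014Lawvere}, whereas you give a self-contained argument. Your route is the right one, and the reduction of $2$-cells to their arity-$1$ components is immediate since the unit $\I$ of $\F$ is the representable at $1$, so a map $\I\to\underline{\G}(TX,SX)$ is by Yoneda just a morphism $TX\to SX$ in $\G$, its higher components being precomposition with projections. The one place your sketch should be tightened is the power-preservation step: universality of the family $(\Phi\pi_i)_i\colon \Phi(X^n)\to\Phi X$ cannot be extracted only from relations among morphisms in the image of $T$; you must exhibit an inverse to the canonical comparison $\langle\Phi\pi_0,\dots,\Phi\pi_{n-1}\rangle\colon T(X^n)\to(TX)^n$, and the only candidate is $\theta:=T_{X,X^n,n}(1_{X^n})\in\G((TX)^n,T(X^n))$, obtained by applying the arity-$n$ component of $T$ to the identity of $X^n$ regarded as an element of $\underline{\E}(X,X^n)(n)$. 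Then the unit axiom at arity $n$ together with functoriality applied to the composite presenting $\pi_i=\pi_i\circ 1_{X^n}$ gives $\Phi(\pi_i)\circ\theta=\pi_i^{TX}$, hence $\langle\Phi\pi_i\rangle\circ\theta=1$; and functoriality applied to the composite presenting $1_{X^n}=1_{X^n}\circ\langle\pi_0,\dots,\pi_{n-1}\rangle$ at arity $1$ gives $\theta\circ\langle\Phi\pi_i\rangle=1$. Your phrase about ``the two ways a projection appears'' points in this direction, but the element that actually does the work is $1_{X^n}$ at arity $n$, not the projections themselves; once $\theta$ is named, the identification $T=\underline{\Phi}$ follows as you say, since $T_{X,Y,n}(f)=\Phi(f)\circ\theta=\underline{\Phi}_{X,Y,n}(f)$ for every $f\in\E(X^n,Y)$.
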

\begin{proof}
  The composition morphisms
  $\underline \E(Y, Z) \comp \underline \E(X,Y) \rightarrow \underline
  \E(X,Z)$
  of $\underline \E$ are induced by the dinatural family of maps
  \begin{equation}
    \begin{aligned}
      \E(Y^m, Z) \times \E(X^n, Y)^m & \rightarrow \E(X^n, Z) \\
      (f, g_1, \dots, g_m) & \mapsto f(g_1, \dots, g_m)
    \end{aligned}
  \end{equation}
  while the $n$-component of the identities map
  $I \rightarrow \underline \E(X,X)$ is the map
  $n \rightarrow \E(X^n, X)$ picking out the $n$ projection maps
  $\pi_0, \dots, \pi_{n-1}$. For the remaining details, we refer the
  reader to~\cite[Proposition~3.8]{Garner2014Lawvere}.
\end{proof}
The additional aspect of the theory enabled by our framework for
commutativity is the existence of a symmetric monoidal closed
structure on $\F\text-\cat{Cat}$. Indeed, the normal duoidal $\F$ is
$\ast$-symmetric and $\ast$-closed; moreover, its underlying category
$\F$ is locally presentable (in particular complete) and the
$\comp$-tensor product thereon is accessible, whence by
Proposition~\ref{prop:22}, $\F\text-\cat{Cat}$ is cocomplete and free
$\F$-categories exist. So Proposition~\ref{prop:17} applies to show
that $\F\text-\cat{Cat}$ admits a symmetric monoidal closed structure
$(\comm, \mathcal I, [\thg, \thg])$ given by the commuting tensor
product of $\F$-categories.

\begin{Prop}
  \label{prop:2}
  For any category $\E$ with finite powers and any algebraic theory
  $T$, we have an isomorphism of $\F$-categories
  \begin{equation}
    \underline{\cat{Mod}(T; \E)} \cong [\Sigma T, \underline \E]\rlap{ .}
  \end{equation}
\end{Prop}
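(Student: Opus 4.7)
The plan is to identify the two $\F$-categories by checking they agree on objects, on hom-objects (viewed as subobjects of $\underline{\E}$), and have matching identities and compositions. For objects, a $\V$-functor $F \colon \Sigma T \to \underline \E$ is the datum of an object $X = F\ast \in \E$ together with a $\comp$-monoid morphism $\hat F \colon T \to \underline \E(X, X)$ in $\F$; since $\underline \E(X, X)(n) = \E(X^n, X)$, the $\comp$-monoid unit and associativity axioms for $\hat F$ translate termwise into the projection and substitution clauses of Definition~\ref{def:9}, so that such an $F$ is precisely a $T$-model structure on $X$ in $\E$. Hence the objects of $[\Sigma T, \underline \E]$ bijectively match those of $\cat{Mod}(T; \E)$, and thus of $\underline{\cat{Mod}(T; \E)}$.

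For hom-objects, let $F, G$ correspond to $T$-models $X, Y$. Combining Propositions~\ref{prop:16} and~\ref{prop:15}, the object $[\Sigma T, \underline \E](F, G)$ appears as an equaliser subobject of $\underline \E(X, Y)$, selecting those elements which equalise the two composites $\underline \E(X, Y) \conv T \rightrightarrows \underline \E(X, Y)$ given respectively by the right-$T$-action via $\hat F$ (through $\sigma$) and the left-$T$-action via $\hat G$ (through $\tau$) of the extranaturality hexagon. On the other hand, since the forgetful $\cat{Mod}(T; \E) \to \E$ creates finite powers, $\underline{\cat{Mod}(T; \E)}(X, Y)$ is equally a subobject of $\underline \E(X, Y)$: its level-$n$ part consists of those $f \colon X^n \to Y$ that are $T$-homomorphisms for the pointwise $T$-structure on $X^n$, i.e.\ those satisfying $f \cdot ([\alpha]_X)^n = [\alpha]_Y \cdot f^m$ for every $\alpha \in T(m)$.

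The crux is to show these two subobjects coincide; this will be the main obstacle. By the Day-convolution formula, level-$n$ elements of $\underline \E(X, Y) \conv T$ are presented as triples $(\phi \colon rs \to n,\, f \colon X^r \to Y,\, \alpha \in T(s))$. Tracing such a triple through the $\sigma$-composite yields $f \cdot ([\alpha]_X)^r \cdot \phi^\ast \colon X^n \to X^r \to Y$, while through the $\tau$-composite it yields $[\alpha]_Y \cdot f^s \cdot \psi^\ast \colon X^n \to Y^s \to Y$, where $\phi^\ast, \psi^\ast$ are the reindexings specified by the interchange map $\xi$ taken in the two orders. Specialising to $r = n$ and varying $s$ and $\alpha$, the equaliser condition thereby reduces to exactly the pointwise-$T$-homomorphism condition on $f$. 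Finally, identities and composition on both sides are inherited from those of $\underline \E$ via the two common subobject embeddings into $\underline \E(X, Y)$, so they match automatically. The technical core of the argument is the Day-convolution calculation tracking $\sigma$ and $\tau$ through the interchange $\xi$; once that is carried out, the desired isomorphism of $\F$-categories follows.
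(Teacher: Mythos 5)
Your proposal is correct and follows essentially the same route as the paper: objects of $[\Sigma T, \underline \E]$ are identified with $T$-models via $\comp$-monoid maps $T \to \underline\E(X,X)$, the hom-objects are computed as the equalizer/end of the $\sigma$- and $\tau$-paths of the extranaturality hexagon, and tracing these through Day convolution yields exactly the condition $f\cdot\dbr{\alpha}_X^n = \dbr{\alpha}_Y\cdot f^m$, i.e.\ the $T$-homomorphism condition, with composition inherited from $\underline\E$. The only difference is cosmetic: the paper evaluates the two hexagon paths directly on pairs $(f,\alpha)$ rather than on general convolution triples, which is precisely the ``technical core'' computation you defer but correctly describe.
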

\begin{proof}
  To give an $\F$-functor
  $F \colon \Sigma T \rightarrow \underline \E$ is equally to give an
  object $X \in \E$ together with a map of $\comp$-monoids
  $f \colon T \rightarrow \underline \E(X,X)$ in $\F$; now using the definition
  of $\underline \E$ in Proposition~\ref{prop:1}, we see that this is
  precisely to give a $T$-model in $\E$. Given another $\F$-functor
  $G \colon \Sigma T \rightarrow \underline \E$, corresponding to a
  $\comp$-monoid map $g \colon T \rightarrow \underline \E(Y,Y)$, say, we may
  trace through the construction of Proposition~\ref{prop:15} to find
  that the hom-object $[\Sigma T, \underline \E](F, G)$ is obtained
  by forming the hexagon:  \begin{equation}
    \cd[@!C@C-7.7em@R-0.5em]{
    & \sh{l}{1.7em}\underline \E(X,Y) \comp T \ar[rr]^-{1 \comp f} & &
    \sh{r}{2em}\underline \E(X,Y) \comp \underline \E(X,X) \ar[dr]^m \\ 
    \sh{r}{0.5em} \underline \E(X,Y) \conv T \ar[ur]^-{\sigma} \ar[dr]_-{\tau} & & & &
    \underline \E(X,Y)\rlap{ ,} \\
    &\sh{l}{1.7em}T \comp \underline \E(X,Y) \ar[rr]_-{g \comp 1} & &
    \sh{r}{2em}\underline \E(Y,Y) \comp \underline \E(X,Y) \ar[ur]_m
  }
  \end{equation}
  and then taking the equalizer of the transposed parallel pair
  $\underline \E(X,Y) \rightrightarrows [T, \underline \E(X,Y)]$. The
  two sides of the preceding hexagon send a pair
  $(f \in \E(X^n, Y), \alpha \in Tm)$ to the respective composites
  \begin{equation}
    X^{nm} \cong (X^m)^n \xrightarrow{\dbr{\alpha}_X^n} X^n \xrightarrow{f} Y \qquad \text{and} \qquad X^{nm} \cong (X^n)^m \xrightarrow{f^m} Y^m \xrightarrow{\dbr{\alpha}_Y} Y
  \end{equation}
  from which it follows that $[\Sigma T, \underline \E](F, G)(n)$ is
  given by the set of all $f \in \E(X^n, Y)$ such that
  $f.\dbr{\alpha}_X^n = \dbr{\alpha}_Y.f^m$ for all $\alpha \in Tm$;
  that is, by the set of $T$-model homomorphisms from $X^n$ to $Y$. It
  is easy to see that this bijection respects composition, and so we have
  $\underline{\cat{Mod}(T; \E)} \cong [\Sigma T,
  \underline \E]$ as required.
\end{proof}
Using this result, we may identify the notions of our general theory
with those given by Definition~\ref{def:22}.
\begin{Cor}
  \label{cor:4}
  An algebraic theory $U$ is the tensor product of theories $S$ and
  $T$ if and only if $\Sigma U \cong \Sigma S \comm \Sigma T$ as $\F$-categories.
\end{Cor}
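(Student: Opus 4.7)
The plan is to combine Proposition~\ref{prop:2}, the hom-tensor adjunction associated to the symmetric monoidal closed structure $(\comm, [\thg,\thg], \mathcal I)$ on $\F\text-\cat{Cat}$, and the $2$-fully faithful embedding $\underline{\thg}$ of Proposition~\ref{prop:1} in order to translate between the enriched and algebraic sides. Since $\F$ has a terminal object, one-object $\F$-categories are closed under $\comm$ (as noted at the start of Section~\ref{sec:comm-tens-prod}), so there is an algebraic theory $W$ with $\Sigma W \cong \Sigma S \comm \Sigma T$, and the corollary reduces to identifying $W$ with the tensor product.

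Concretely, for any category $\E$ with finite powers, I would string together
\begin{equation*}
\underline{\cat{Mod}(W;\E)} \cong [\Sigma W, \underline\E] \cong [\Sigma S \comm \Sigma T, \underline\E] \cong [\Sigma S, [\Sigma T, \underline\E]] \cong [\Sigma S, \underline{\cat{Mod}(T;\E)}] \cong \underline{\cat{Mod}(S, T;\E)}
\end{equation*}
by three applications of Proposition~\ref{prop:2} and one of the hom-tensor adjunction. Taking $\E = \cat{Set}$ and using the $2$-full-faithfulness of $\underline{\thg}$, this delivers $\cat{Mod}(W; \cat{Set}) \cong \cat{Mod}(S, T; \cat{Set})$ over $\cat{Set}$, so $W$ is a tensor product of $S$ and $T$. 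This immediately yields the backward implication of the corollary: if $\Sigma U \cong \Sigma S \comm \Sigma T$ then $U \cong W$ as $\comp$-monoids in $\F$, so $U$ is a tensor product.

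For the forward implication, if $U$ is a tensor product then $\cat{Mod}(U; \cat{Set}) \cong \cat{Mod}(S, T; \cat{Set}) \cong \cat{Mod}(W; \cat{Set})$ over $\cat{Set}$, and the classical fact that an algebraic theory is recovered up to isomorphism from its category of models over $\cat{Set}$ (e.g.\ via Lawvere's semantics--structure adjunction, which extracts $Wn$ as the hom-set of natural transformations between the $n$th and $1$st forgetful-functor powers) gives $U \cong W$, hence $\Sigma U \cong \Sigma W \cong \Sigma S \comm \Sigma T$. The main point to check is that each isomorphism in the display above is compatible with the relevant forgetful structure---in the enriched picture, with the $\F$-functors induced by precomposition with the unit map $\mathcal I \to \Sigma T$---so that the final isomorphism genuinely lies over $\underline{\cat{Set}}$; this is a routine but not entirely trivial unwinding of the constructions in Propositions~\ref{prop:2} and~\ref{prop:17}.
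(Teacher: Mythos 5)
Your proposal is correct and follows essentially the same route as the paper: both rest on the chain of isomorphisms $[\Sigma S \comm \Sigma T, \underline{\cat{Set}}] \cong [\Sigma S, [\Sigma T, \underline{\cat{Set}}]] \cong \underline{\cat{Mod}(S,T;\cat{Set})}$ coming from Proposition~\ref{prop:2} and the closed structure of Proposition~\ref{prop:17}, together with the Lawvere result that the semantics functor $\cat{AlgTh}^\op \to \cat{CAT}/\cat{Set}$ is fully faithful. The only difference is presentational: the paper invokes full fidelity once to convert $\Sigma U \cong \Sigma S \comm \Sigma T$ directly into an isomorphism of model categories over $\cat{Set}$, whereas you split the two implications and introduce the theory $W$ with $\Sigma W \cong \Sigma S \comm \Sigma T$ explicitly.
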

\begin{proof}
  As is
  well-known~\cite[Thm~III.1.1~\&~III.1.2]{Lawvere1963Functorial}, the
  functor $\cat{AlgTh}^\op \rightarrow \cat{CAT} / \cat{Set}$ sending each
  theory $T$ to the forgetful functor
  $\cat{Mod}(T; \cat{Set}) \rightarrow \cat{Set}$ is fully faithful;
  and so $\Sigma U \cong \Sigma S \comm \Sigma T$ if and only if
  $\cat{Mod}(U; \cat{Set}) \cong \cat{Mod}(S \comm T; \cat{Set})$ over
  $\cat{Set}$. But by the preceding result, we have that
  \begin{equation}\label{eq:27}
    \begin{aligned}
    \underline{\cat{Mod}(S \comm T; \cat{Set})} &\cong [\Sigma S \comm \Sigma T, \underline{\cat{Set}}] \cong
[\Sigma S, [\Sigma T, \underline{\cat{Set}}]] \cong
[\Sigma S, \underline{\cat{Mod}(T; \cat{Set})}] \\ &\cong
\underline{\cat{Mod}(S; \cat{Mod}(T; \cat{Set}))} = \underline{\cat{Mod}(S, T; \cat{Set})}
    \end{aligned}
  \end{equation}
  as categories over $\cat{Set}$; whence
  $\Sigma U \cong \Sigma S \comm \Sigma T$ if and only if $U$ is the
  tensor product of $S$ and $T$ in the sense of
  Definition~\ref{def:22}.
\end{proof}
\begin{Cor}
  \label{cor:2}
  An algebraic theory $T$ is commutative in the sense of
  Definition~\ref{def:22} just when it is a commutative $\comp$-monoid
  in the sense of Definition~\ref{def:20}.
\end{Cor}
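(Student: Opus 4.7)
The plan is to show that both commutativity conditions are equivalent to the existence of a single theory morphism $r\colon T\comm T\to T$, thereby bridging them. Write $i_1,i_2\colon \Sigma T\to \Sigma T\comm \Sigma T$ for the two canonical $\F$-functors arising as the coproduct inclusions in the construction of the commuting tensor product (cf.~\eqref{eq:18}). First I would unpack Definition~\ref{def:20}: $T$ is a commutative $\comp$-monoid precisely when $(1_T,1_T)$ is a commuting cospan, i.e.\ when the sesquifunctor $\Sigma T,\Sigma T\to \Sigma T$ with both components equal to the identity is a bifunctor. By the universal property of $\comm$ established in Proposition~\ref{prop:10}, this is equivalent to the existence of an $\F$-functor $r\colon \Sigma T\comm \Sigma T\to \Sigma T$---that is, a theory morphism $r\colon T\comm T\to T$---satisfying $r i_1=r i_2=1_T$.

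Next I would translate Definition~\ref{def:22} into equivalent algebraic data. Combining Proposition~\ref{prop:2} with the calculation~\eqref{eq:27} from the proof of Corollary~\ref{cor:4} yields an isomorphism $\cat{Mod}(T,T;\cat{Set})\cong \cat{Mod}(T\comm T;\cat{Set})$ over $\cat{Set}$, under which the two projections to $\cat{Mod}(T;\cat{Set})$ correspond to the restriction functors along $i_1$ and $i_2$. Hence a factorization of the diagonal $\cat{Mod}(T;\cat{Set})\to \cat{Mod}(T;\cat{Set})\times_{\cat{Set}}\cat{Mod}(T;\cat{Set})$ through $\cat{Mod}(T,T;\cat{Set})$ amounts to a functor $D\colon \cat{Mod}(T;\cat{Set})\to \cat{Mod}(T\comm T;\cat{Set})$ over $\cat{Set}$ with $i_k^*\circ D=\mathrm{id}$ for $k=1,2$.

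The two characterizations are then tied together by the fully faithfulness of the semantics functor $\cat{AlgTh}^\op\to \cat{CAT}/\cat{Set}$---the classical Lawvere result already invoked in the proof of Corollary~\ref{cor:4}. This fullness translates functors between model categories over $\cat{Set}$ back into theory morphisms, so the datum $D$ above is in bijection with a theory morphism $T\comm T\to T$ splitting both $i_1$ and $i_2$. Since this is exactly the datum produced in the first paragraph from commutativity in the sense of Definition~\ref{def:20}, the two notions coincide.

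The only subtle step is the last, namely the backward passage from a lifting of the diagonal at the level of model categories to a theory morphism; but this is discharged by the already-cited fullness of the semantics functor, so no further work is required. The remainder of the argument is a direct assembly of Proposition~\ref{prop:10}, Proposition~\ref{prop:2} and Corollary~\ref{cor:4}.
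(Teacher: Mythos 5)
Your proof is correct and follows essentially the same route as the paper: both arguments use the identification $\cat{Mod}(T,T;\cat{Set})\cong\cat{Mod}(T\comm T;\cat{Set})$ from~\eqref{eq:27} together with the full fidelity of the semantics functor $\cat{AlgTh}^\op\to\cat{CAT}/\cat{Set}$ to translate the factorization of the diagonal into a theory morphism $T\comm T\to T$ splitting both inclusions, which by the universal property of $\comm$ is exactly the commutativity of the identity cospan, i.e.\ Definition~\ref{def:20}. You are merely a bit more explicit than the paper about the intermediate datum $r\colon T\comm T\to T$ and the matching of the projections with restriction along $i_1,i_2$.
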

\begin{proof}
$T$ is commutative just when there is a factorisation as in the diagram
\begin{equation}
  \cd[@C-1.5em@R-0.5em]{
    & \cat{Mod}(T; \cat{Set}) \ar@/_1em/[ddl]_-{1} \ar@/^1em/[ddr]^-{1} \ar@{-->}[d] \\
    & \cat{Mod}(T, T; \cat{Set}) \ar[dl]^-{\pi_1} \ar[dr]_-{\pi_2} \\
    \cat{Mod}(T; \cat{Set}) & & \cat{Mod}(T; \cat{Set})\rlap{ .}
  }
\end{equation}
By the identification~\eqref{eq:27} of $\cat{Mod}(T, T; \cat{Set})$
with $\cat{Mod}(T \comm T; \cat{Set})$, together with the full
fidelity of the model functor
$\cat{AlgTh}^\op \rightarrow \cat{CAT}/\cat{Set}$, this is equally to
ask that the identity cospan $T \rightarrow T \leftarrow T$ of
$\comp$-monoids factor through the universal commuting cospan
$T \rightarrow T \comm T \leftarrow T$; which is to ask that $T$ be a
commutative $\comp$-monoid in $\F$.
\end{proof}
Note that the hypotheses of Example~\ref{ex:4} apply to $\F$, so that
there is a duoidal adjunction
$(\thg) \cdot \J \dashv \F(\J, \thg) \colon \F \rightarrow \cat{Set}$
inducing, by Proposition~\ref{prop:4}, a monoidal adjunction
$\F\text-\cat{Cat} \leftrightarrows \cat{Cat}$. Restricting this to
one-object categories gives a monoidal adjunction
$(\cat{AlgTh},\odot) \leftrightarrows (\cat{Mon},\times)$ between the category of
algebraic theories and the category of monoids, whose left adjoint
views a monoid as an algebraic theory with only unary operations, and
whose right adjoint sends an algebraic theory to its monoid of unary
operations. The monoidality of this adjunction tells us, in
particular, that a monoid is commutative just when the associated
algebraic theory is commutative; that the monoid of unary
operations of any commutative algebraic theory is commutative; and
that the theory associated to a product monoid $M \times N$ is the
tensor product of $M$ and $N$ \emph{qua} theories.

\subsection{Explicit formulae}
\label{sec:explicit-formulae}
There are well-known explicit formulae which give the tensor product
of two algebraic theories, and which characterise when an algebraic
theory is commutative; see~\cite{Freyd1966Algebra}, for example. We
conclude this section by showing how these formulae can be
reconstructed from our general framework. To do so, we first calculate
the interchange map $\xi \colon (X \comp Y) \conv (Z \comp W) \rightarrow (X \conv Z)
\comp (Y \conv W)$ of the duoidal structure on $\F$. Such a map
classifies a natural family of maps $(X \comp Y)(n) \times (Z \comp W)(m) \rightarrow [(X \conv Z)
\comp (Y \conv W)](nm)$, which, expanding out the definitions, is
equally a family of maps
\begin{equation}
  \begin{aligned}
    Xk \times (Yn)^k \times Z\ell \times (Wm)^\ell & \rightarrow \textstyle\int^k (X \conv Z)k \times (Y \conv W)(nm)^k
  \end{aligned}
\end{equation}
natural in $n$ and $m$ and dinatural in $k$ and $\ell$; which we calculate
to be given by
\begin{equation}\label{eq:45}
  (x, y_1, \dots, y_k, z, w_1, \dots, w_\ell) \mapsto (u(x,z), u(y_1, w_1), \dots, u(y_k, w_k))
\end{equation}
(where  $u \colon Xk \times Z\ell \rightarrow (X \conv
Z)(k\ell)$ and $u \colon Yn \times Wm \rightarrow (Y \conv W)(nm)$ are
the universal maps into the convolution tensor).
It follows that the map $\sigma \colon X \conv Y \rightarrow X \comp Y$
associated to the duoidal structure on $\F$ classifies the natural
family
\begin{equation}
  \begin{aligned}
    Xn \times Ym & \rightarrow (X \comp Y)(nm) = \textstyle\int^k Xk \times Y(nm)^k  \\
    (f, g) & \mapsto (f, Y\alpha_1(g), \dots, Y\alpha_n(g))
  \end{aligned}
\end{equation}
where here $\alpha_j \colon n \rightarrow nm$ is the injection defined
by $\alpha_j(i) = (i,j)$. Dually, the map $\tau \colon X \conv Y \rightarrow Y
\comp X$ classifies the natural family
\begin{equation}
  \begin{aligned}
    Xn \times Ym & \rightarrow (X \comp Y)(nm) = \textstyle\int^k Xk \times Y(nm)^k  \\
    (f, g) & \mapsto (g, X\beta_1(f), \dots, X\beta_m(f))
  \end{aligned}
\end{equation}
where now $\beta_i \colon m \rightarrow nm$ is given by
$\beta_i(j) = (i, j)$. Given these calculations, we now see that, if $T$ is an
algebraic theory, then a cospan $f \colon A \rightarrow T \leftarrow B \colon g$
in $\F$ is commuting just when, for each $\phi = fa \in Tn$ and each
$\psi = gb \in Tm$, we have that
\begin{equation}
  \psi(\phi(\pi_{11}, \dots, \pi_{n1}), \dots, \phi(\pi_{1m}, \dots, \pi_{nm})) =
  \phi(\psi(\pi_{11}, \dots, \pi_{1m}), \dots, \psi(\pi_{n1}, \dots, \pi_{nm}))
\end{equation}
as $nm$-ary operations. In particular, we see that an algebraic theory
$T$ is commutative just when this equality holds for all pairs of
operations $\phi, \psi$ in $T$; furthermore, the commuting tensor
product $S \comm T$ of theories $S$ and $T$ is the quotient of the
coproduct theory $S + T$ which imposes these equalities for all
$\phi \in Sn$ and $\psi \in Tm$.

\section{Example: symmetric operads}
\label{sec:exampl-symm-oper}
Symmetric operads were introduced by May
in~\cite{May1972The-geometry}; while his interest was in the case of
\emph{topological} operads, one can define symmetric operads over any
braided monoidal base $\V$. We consider here the case $\V = \cat{Set}$
of symmetric plain operads; one way of understanding these is as
special kinds of algebraic theory, whose equations between derived
operations are generated by ones involving the same variables on each
side of the equality without omission or repetition.

The category of symmetric plain operads admits a monoidal structure
known as the \emph{Boardman--Vogt} monoidal
structure~\cite{Boardman1973Homotopy}; on viewing symmetric operads as
algebraic theories, their Boardman--Vogt tensor product is precisely
their tensor product as theories described in the previous section.
The purpose of this section is to exhibit this fact as a consequence
of our general framework for commutativity.

\subsection{Species and symmetric operads}
\label{sec:spec-symm-oper}
A \emph{species} is a functor
$X \colon \mathbb P \rightarrow \cat{Set}$, where $\mathbb P$ is the
category of finite cardinals and bijective mappings between them; a
symmetric operad is a monoid with respect to a suitable
\emph{substitution} tensor product on the category
$\esp = [\mathbb P, \cat{Set}]$ of species. 

To describe this, note first that sum and product of finite cardinals
induce convolution monoidal structures on $\esp$, which we denote by
$\oplus$ and $\conv$ respectively; the units of these monoidal
structures are the respective representable functors $O = y_0$ and
$J = y_1$. The sum monoidal structure on $\mathbb P$ in fact exhibits
it as the free symmetric monoidal category on the object $1$; it
follows by~\cite[Theorem~5.1]{Im1986A-universal} that
$(\esp, \oplus, O)$ is the \emph{free symmetric monoidal closed
  cocomplete category} on the object $y_1$. Writing
$\cat{CoctsStrMon}$ for the $2$-category of symmetric monoidal closed
cocomplete categories and cocontinuous symmetric strong monoidal
functors, this is to say that, for any $\V \in \cat{CoctsStrMon}$, the
functor $\cat{CoctsStrMon}(\esp, \V) \rightarrow \V$ which evaluates
at the object $y_1$ is an equivalence of categories. 

In particular, we have an equivalence
$\cat{CoctsStrMon}(\esp, \esp) \simeq \esp$, under which the
composition monoidal structure of the left-hand side transports to
yield the \emph{substitution} tensor product $\comp$ on
$\esp = [\mathbb P, \cat{Set}]$. This has as unit the representable
$I = y_1$, and binary tensor
\begin{equation}\label{eq:31}
  X \comp Y = \textstyle\int^{k \in \mathbb P} Xk \times Y^{\oplus k}
\end{equation}
where here we write $Y^{\oplus k}$ for the $k$-fold tensor product
$Y \oplus \dots \oplus Y$. Observe that to give a natural
transformation $\alpha \colon X \comp Y \rightarrow Z$ in $\esp$ is equally to give
a family of maps
\begin{equation}
  \label{eq:33}
  \tilde \alpha \colon Xk \times Ym_1 \times \dots \times Ym_k \rightarrow Z(\Sigma_i m_i)
\end{equation}
natural in the $m_i$'s and dinatural in $k$.

Now a \emph{symmetric operad} is a $\comp$-monoid $(T, \eta, \mu)$ in
$\esp$. Like in the preceding section, we refer to elements $f \in Tn$
as \emph{$n$-ary operations} of $T$; we write $\id \in T1$ for the
element classified by $\eta \colon y_1 \rightarrow T$; and given
$f \in Tn$ and $g_i \in Tm_i$ (for $i = 1, \dots, n$), we write
$f(g_1, \dots, g_n) \in T(\Sigma_i m_i)$ for their image under
$\tilde \mu$ as in~\eqref{eq:33}.

\subsection{Symmetric operads and theories}
\label{sec:symm-oper-theor}
As indicated above, we wish to identify symmetric operads with certain
kinds of algebraic theory. To this end, let us write
$H \colon \mathbb P \rightarrow \mathbb F$ for the (non-full)
inclusion functor, and write $\Th \colon \esp \rightarrow \F$
for the left Kan extension functor $\Lan_H \colon [\mathbb P,
\cat{Set}] \rightarrow [\mathbb F, \cat{Set}]$. We now have:
\begin{Prop}
  The functor $\Th \colon \esp \rightarrow \F$ is faithful and
  full on isomorphisms, and is strong monoidal as a functor
  \begin{equation}
    \label{eq:34}
    (\esp, \mathord\oplus) \rightarrow (\F, \mathord\times) \qquad \text{and} \qquad 
    (\esp, \mathord\conv) \rightarrow (\F, \mathord\conv) \qquad \text{and} \qquad 
    (\esp, \mathord\comp) \rightarrow (\F, \mathord\comp)\rlap{ ,}
  \end{equation}
  and symmetric in the first two cases. It follows that
  $\Th$ exhibits $\esp$ as equivalent to a (non-full) subcategory of
  $\F$ closed under the $\comp$ and $\conv$ monoidal structures.
\end{Prop}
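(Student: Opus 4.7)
The plan is to work throughout with the explicit coend formula for the left Kan extension $\Th = \Lan_H$. Since $\mathbb P$ is a groupoid, this collapses to
\begin{equation*}
  \Th(X)(n) = \textstyle\coprod_k \bigl( \mathbb F(k,n) \times Xk \bigr) / S_k,
\end{equation*}
with $S_k = \mathbb P(k,k)$ acting by $(f,x) \cdot \sigma = (f\sigma, X(\sigma^{-1})(x))$. The main technical tool will be the $\mathbb P$-natural embedding $\iota_k \colon Xk \hookrightarrow \Th(X)(k)$, $x \mapsto [(\id_k, x)]$, which plays the role of a counit for the Kan extension restricted to $\mathbb P$.

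Faithfulness is then a one-line check: if $\Th(\alpha) = \Th(\beta)$ for $\alpha, \beta \colon X \to Y$ in $\esp$, then evaluating at $[(\id_k, x)]$ gives the equality $[(\id_k, \alpha_k(x))] = [(\id_k, \beta_k(x))]$ in $\Th(Y)(k)$; since the stabiliser of $\id_k$ under the $S_k$-action is trivial, this forces $\alpha_k(x) = \beta_k(x)$. For fullness on isomorphisms, given an iso $\phi \colon \Th(X) \cong \Th(Y)$, I must exhibit the subset $\iota_k(Xk) \subseteq \Th(X)(k)$ as $\phi_k$-invariant. The key invariant is ``support size'': an element $[(f,x)]$ lies in $\iota_k(Xk)$ exactly when it is not in the image of $\Th(X)(\iota)$ for any proper monomorphism $\iota \colon m \hookrightarrow k$ in $\mathbb F$, a condition transported along any $\F$-natural map. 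Restricting $\phi_k$ to the top stratum then produces bijections $\alpha_k \colon Xk \to Yk$ whose $\mathbb P$-naturality follows from the naturality of $\phi$ restricted to the bijections in $\mathbb F$, and uniqueness of the factorisation then gives $\Th(\alpha) = \phi$.

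For strong monoidality, I will treat the three cases separately. For $\oplus / \times$, a direct calculation using the partition formula $(X \oplus Y)(k) \cong \coprod_{a+b=k}(Xa \times Yb)\times_{S_a \times S_b} S_k$ together with the decomposition $n^{a+b} = n^a \times n^b$ yields the natural isomorphism $\Th(X \oplus Y)(n) \cong \Th(X)(n) \times \Th(Y)(n)$, while $\Th(O)$ is the terminal presheaf, i.e., the unit of $\times$. For $\conv / \conv$, the argument is conceptual: since $H \colon (\mathbb P, \times) \hookrightarrow (\mathbb F, \times)$ is strict symmetric monoidal, the standard result that the left Kan extension along a strong monoidal functor is strong monoidal for Day convolutions applies verbatim. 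For $\comp / \comp$, I will invoke the identification of $\esp$ with analytic endofunctors of $\cat{Set}$ under which $\comp$ becomes composition; since the equivalence $\F \simeq \cat{End}_\omega(\cat{Set})$ identifies $\comp$ on $\F$ with composition of finitary endofunctors, and $\Th$ corresponds to the inclusion of analytic into finitary endofunctors, preservation is immediate. In the first two cases, the isomorphisms constructed are visibly compatible with the braidings, giving the symmetric strong monoidality claim.

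The main obstacle is the fullness-on-isomorphisms step: the support-size invariant must be formulated carefully, since a single element of the coend can admit several equivalent representatives $(f,x)$ with different $k$'s. The cleanest fall-back, should this prove awkward, is to appeal to Joyal's theorem that $\Th$ embeds $\esp$ into $\F$ as the wide subcategory of cartesian natural transformations between analytic functors, combined with the observation that every natural isomorphism is automatically cartesian. The final clause of the proposition---that $\Th$ presents $\esp$ as a non-full subcategory of $\F$ closed under $\oplus$, $\conv$ and $\comp$---is then an immediate consequence of the preceding points.
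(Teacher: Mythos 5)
Your primary argument for fullness on isomorphisms has a genuine gap. The invariant you propose---that a class $[(f,x)] \in \Th(X)(k)$ lies in $\iota_k(Xk)$ exactly when it is not in the image of $\Th(X)(\iota)$ for any proper monomorphism $\iota \colon m \hookrightarrow k$---characterises the classes whose structure map $f$ is \emph{surjective}, not those for which it is bijective: factoring through a proper subobject of $k$ is unaffected by precomposing $f$ with a bijection, but the criterion is blind to failures of injectivity. Concretely, take the species $X$ with $X2 = 1$ and $Xj = \emptyset$ otherwise. Then $\Th(X)(1)$ is a singleton, the class of the unique surjection $2 \to 1$; it is not in the image of $\Th(X)(\iota)$ for the only proper monomorphism $0 \hookrightarrow 1$ (that image is empty, since $\Th(X)(0) = \emptyset$), and yet $\iota_1(X1) = \emptyset$. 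So your ``top stratum'' is in general strictly larger than $\iota_k(Xk)$, and restricting $\phi_k$ to it does not yield a map $Xk \to Yk$. The obvious repair (excluding the images of all non-invertible $h \colon m \to k$) also fails: for any $x \in X1$ one has $[(\mathrm{id}_1,x)] = \Th(X)(h)\bigl([(g,x)]\bigr)$ where $h \colon 2 \to 1$ is the surjection and $g \colon 1 \to 2$ a section of it. Detecting $\iota_k(Xk)$ by naturality alone is precisely the delicate point that Joyal's and Weber's treatments handle via weak cartesianness, respectively generic elements.

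Your declared fall-back is the right move, and it is in fact what the paper does: it simply cites Joyal (or Weber) for faithfulness and fullness on isomorphisms. Be aware, though, that your statement of Joyal's theorem is imprecise: the subcategory in question is not wide (its objects are only the analytic functors), and the morphisms in the image of $\Th$ are the \emph{weakly} cartesian natural transformations, not the cartesian ones (the transformation induced by the quotient from the species of ordered pairs to the species of unordered pairs fails to be cartesian at the surjection $2 \to 1$). This does not hurt the present statement, since a natural isomorphism is cartesian, hence weakly cartesian, hence of the form $\Th(\alpha)$ with $\alpha$ invertible by faithfulness. The rest of your proposal is sound: your coend computations for faithfulness and for $(\oplus,\times)$ replace the paper's citations by elementary arguments; your treatment of $(\conv,\conv)$ via monoidality of left Kan extension along the strong symmetric monoidal $H$ is exactly the paper's; for $(\comp,\comp)$ both you and the paper appeal to Joyal's identification of substitution with composition of analytic, respectively finitary, endofunctors. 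The final clause does require the small construction the paper spells out (the replete image subcategory $\F'$ and the check that the corestriction of $\Th$ is an equivalence), but this is routine once faithfulness and fullness on isomorphisms are in place.
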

\begin{proof}
  For fidelity and fullness on isomorphisms, we refer
  to~\cite[Proposition~1]{Joyal1986Foncteurs}
  or~\cite[Proposition~10.9]{Weber2004Generic}. For strong symmetric
  monoidality as a functor
  $(\esp, \mathord\oplus) \rightarrow (\F, \mathord\times)$ and
  $(\esp, \mathord\conv) \rightarrow (\F, \mathord\conv)$, we observe
  that $H \colon \mathbb P \rightarrow \mathbb F$ is strong symmetric
  monoidal with respect to $+$ and $\times$; whence
  $\Th = \Lan_H \colon [\mathbb P, \cat{Set}] \rightarrow [\mathbb F,
  \cat{Set}]$
  is strong symmetric monoidal with respect to the corresponding
  convolution tensor products, which are $\oplus$ and $\conv$ on
  $\esp$, and $\times$ and $\conv$ on $\F$; here we
  use~\cite[\S5]{Day1970On-closed} to see that convolution with
  respect to coproduct on $\mathbb F$ is in fact cartesian product in
  $\F$. For strong monoidality of $\Th$ as a functor
  $(\esp, \comp) \rightarrow (\F, \comp)$, we refer
  to~\cite[Section~2.1(iv)]{Joyal1986Foncteurs}. 

  For the final clause of the proposition, let $\F' \subset \F$ be the
  subcategory whose objects are those isomorphic to ones in the image
  of $\mathrm{Th}$, and whose morphisms $f \colon X \rightarrow Y$ are those
  for which there is an isomorphism $\Th(g) \cong f$ in $\F^\cat{2}$.
  It is easy to show using fidelity and fullness on isomorphisms that
  the corestriction $\esp \to \F'$ of $\Th$ is an equivalence of
  categories, as desired.
\end{proof}
Using this proposition, we can identify symmetric operads with certain
kinds of algebraic theory. In order to identify the Boardman--Vogt
tensor product of symmetric operads with the tensor product of the
associated theories, we will need to prove that $\esp$ is in fact
equivalent to a sub-duoidal category of $\F$. To do so, it will be
convenient to introduce the notion of a \emph{rig category}.
\begin{Defn}
  \label{def:7}
  A \emph{rig category}~\cite{Laplaza1972Coherence}
  $(\V, \oplus, O, \otimes, I)$ is a category $\V$ equipped with a
  symmetric monoidal structure $(\oplus, O)$ and a monoidal structure
  $(\otimes, I)$, together with a lifting of the functor
  $\V \rightarrow [\V, \V]$ sending $X$ to $X \otimes (\thg)$ to a
  functor $\V \rightarrow \cat{OpMon}_\oplus(\V, \V)$ which is
  opmonoidal with respect to the $\oplus$-tensor product on $\V$ and
  the pointwise $\oplus$-tensor product on
  $\cat{OpMon}_\oplus(\V, \V)$.
\end{Defn}
In more elementary terms, $\V$ is a rig category when its two monoidal structures
$\oplus$ and $\otimes$ are related by nullary constraint morphisms
$\alpha_\ell \colon O
\otimes X \rightarrow O$ and $\alpha_r \colon X \otimes O \rightarrow O$ and binary
constraint morphisms
\begin{equation}
  \label{eq:35}
(X \oplus Y) \otimes Z \xrightarrow{\nu_\ell} (X \otimes Z) \oplus (Y \otimes Z) \quad \text{and} \quad 
X \otimes (Y \oplus Z) \xrightarrow{\nu_r} (X \otimes Y) \oplus (X \otimes Z)
\end{equation}
satisfying suitable axioms. In fact, both $\esp$ and $\F$ are rig
categories as a consequence of the following result:
\begin{Prop}
  \label{prop:12}
  Let $\A$ be a small category and $\V$ a cartesian closed cocomplete
  one. If $(\A, \oplus, O, \otimes, I)$ is a rig category, then so is
  $[\A^\op, \V]$ equipped with the corresponding convolution monoidal
  structures.
\end{Prop}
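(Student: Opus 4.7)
The plan is to transport the rig structure of $\A$ to $[\A^\op, \V]$ using the universal property of Day convolution. Since $\V$ is cartesian closed and cocomplete, Day's theorem furnishes $[\A^\op, \V]$ with biclosed convolution monoidal structures $\oplus$ and $\otimes$ corresponding to those of $\A$, with units given by the representables $\A(\thg, O)$ and $\A(\thg, I)$ valued in $\V$; the $\oplus$-convolution inherits the symmetry from $\A$. What remains is to construct the rig distributivity constraints on $[\A^\op, \V]$ and to verify their coherence axioms.

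To produce the binary distributor $\nu_r \colon F \otimes (G \oplus H) \to (F \otimes G) \oplus (F \otimes H)$, I apply the universal properties of both convolutions: such a morphism corresponds to giving a natural family
\begin{equation*}
  FX \times GY \times HZ \to [(F \otimes G) \oplus (F \otimes H)](X \otimes (Y \oplus Z)),
\end{equation*}
where $\times$ denotes the cartesian product of $\V$. I would build this as the composite of (i) the $\otimes$-convolution universal maps $FX \times GY \to (F \otimes G)(X \otimes Y)$ and $FX \times HZ \to (F \otimes H)(X \otimes Z)$; (ii) the $\oplus$-convolution universal map $(F \otimes G)(X \otimes Y) \times (F \otimes H)(X \otimes Z) \to [(F \otimes G) \oplus (F \otimes H)]((X \otimes Y) \oplus (X \otimes Z))$; and (iii) the contravariant presheaf action of the distributor $\nu_r \colon X \otimes (Y \oplus Z) \to (X \otimes Y) \oplus (X \otimes Z)$ of $\A$ on the target. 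The constraints $\nu_\ell$, $\alpha_r$, and $\alpha_\ell$ are obtained by exactly the same recipe, using the corresponding rig constraints of $\A$ together with the empty convolution universal maps for the unit presheaves.

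To verify the rig category axioms of~\cite{Laplaza1972Coherence}, I observe that each axiom equates two morphisms in $[\A^\op, \V]$ between iterated $\oplus$- and $\otimes$-convolution tensors. By repeated transposition across the universal properties of convolution, each such equation reduces to an equality of natural families in $\V$, each side of which is built by tensoring convolution universal maps and composing with contravariant presheaf actions on composites of $\A$'s rig constraints. By the naturality of these universal maps in their $\A$-indices, each such equality in $\V$ holds precisely when the corresponding composite of rig constraints agrees in $\A$, which it does by hypothesis.

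The main obstacle is purely bookkeeping: Laplaza's list of coherence diagrams is sizeable, and each must be systematically unpacked in the manner above. No conceptual difficulty arises, since everything follows formally from the universal property of Day convolution together with the rig axioms already assumed on $\A$; the level of care required is comparable to that in the verification of the duoidal axioms in Proposition~\ref{prop:9}.
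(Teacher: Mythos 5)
Your proposal is correct and takes essentially the same route as the paper's proof: the distributivity constraints are induced through the universal property of Day convolution by pairing the two universal maps (which, exactly as in the paper's explicit use of $(\pi_1,\pi_2,\pi_1,\pi_3)$, relies on the cartesian diagonal to duplicate the shared variable $FX$) and then acting contravariantly by $\A$'s rig constraints, with the nullary constraints handled analogously and the Laplaza coherence axioms deduced from those of $\A$ together with the universal property of convolution. The paper treats the coherence verification with the same brevity as you do, so no gap is introduced by leaving it at that level.
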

\begin{proof}
  Writing $\oplus, O, \otimes, I$ also for the convolution tensors on
  $[\A^\op, \V]$, we see that to give the constraint maps
  $\alpha_\ell \colon O \otimes X \rightarrow O$ in $[\A^\op, \V]$ is
  equally to give maps
  $\tilde \alpha_\ell \colon XV \rightarrow \V(O \otimes V, O)$ in
  $\V$, natural in $V$; which we obtain as the composites
  \begin{equation}
    \label{eq:37}
    XV \xrightarrow{!} 1 \xrightarrow{(\alpha_\ell)_V} \V(O \otimes V, O)\rlap{ .}
  \end{equation}
  The constraint maps
  $\nu_\ell \colon X \otimes (Y \oplus Z) \rightarrow (X \otimes Y)
  \oplus (X \otimes Z)$
  at $X,Y,Z \in [\A^\op, \V]$ must by the universal property of
  convolution be induced by natural families
  \begin{equation}
    \label{eq:30}
    Xn \times Ym \times Zk \rightarrow [(X \otimes Y) \oplus (X \otimes Z)](n \otimes (m \oplus k))
  \end{equation}
  in $\V$. We obtain these as the composites
  \begin{equation}
    \label{eq:32}
    \begin{aligned}
      Xn \times Ym \times Zk &\xrightarrow{\mathmakebox[7em]{(\pi_1, \pi_2, \pi_1, \pi_3)}} Xn \times Ym \times Xn \times Zk \\
      &\xrightarrow{\mathmakebox[7em]{u \times u}} (X \otimes Y)(n \otimes m) \times (X \otimes Z)(n \otimes k) \\
      &\xrightarrow{\mathmakebox[7em]{u}} [(X \otimes Y) \oplus (X \otimes Z)]((n \otimes m) \oplus (n \otimes k))\\
      & \xrightarrow{\mathmakebox[7em]{[(X \otimes Y) \oplus (X \otimes Z)](\nu_\ell)}} [(X \otimes Y) \oplus (X \otimes Z)](n \otimes (m \oplus k))\rlap{ .}
    \end{aligned}
  \end{equation}
  We define $\alpha_r$ and $\nu_r$ similarly; the coherence axioms for
  a rig now follow from those in $\A$ and the universal property of convolution.
\end{proof}
Since both $\mathbb P^\op$ and $\mathbb F^\op$ are rig categories under
disjoint union and product of finite cardinals, we conclude that
$(\esp, \oplus, O, \conv, J)$ and $(\F, \times, 1, \conv, J)$ are both
rig categories; since $H^\op \colon \mathbb P^\op \rightarrow \mathbb F^\op$ is a
strong morphism of rig categories, so too is $\Th \colon \esp
\rightarrow \F$.

\begin{Prop}
  \label{prop:5}
  $(\esp, \conv, \J, \comp, \I)$ is a normal duoidal category and
  $\Th \colon \esp \rightarrow \F$ a strong duoidal functor;
  whence $\Th$ exhibits $\esp$ as equivalent to a sub-duoidal
  category of~$\F$.
\end{Prop}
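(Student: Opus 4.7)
The plan is to transport the normal duoidal structure on $\F$ from Proposition~\ref{prop:9} to $\esp$ along the faithful functor $\Th$, which the preceding proposition established is strong monoidal for both $\comp$ and $\conv$. Faithfulness ensures uniqueness of the transferred structure; the substantive step is to establish existence, i.e., that the interchange and unit-coherence maps of $\F$ lie in the essential image of $\Th$.

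For the unit coherences, the strong monoidal constraints of $\Th$ provide isomorphisms $\Th(\J) \cong \J_\F$ and $\Th(\I) \cong \I_\F$; composing these with the invertible $\upsilon_\F$, $\mu_\F$, $\gamma_\F$ of $\F$ and using fullness-on-isomorphisms of $\Th$ then yields unique invertible maps $\upsilon \colon \J \to \I$, $\mu \colon \I \conv \I \to \I$ and $\gamma \colon \J \to \J \comp \J$ in $\esp$ whose $\Th$-images match those of $\F$ (up to constraints), and whose invertibility ensures normality. For the interchange, I would mimic in $\esp$ the construction used in Proposition~\ref{prop:9}: the proof there only invokes the coend presentation of $\comp$, the universal property of $\conv$-convolution, and a product on the indexing category---all of which are equally available for $\esp$ via the formula~\eqref{eq:31}, Day convolution on $[\mathbb{P}, \cat{Set}]$, and the product on $\mathbb{P}$. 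This produces natural morphisms $\xi \colon (X \comp Y) \conv (Z \comp W) \to (X \conv Z) \comp (Y \conv W)$ in $\esp$.

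The duoidal axioms for these data then follow most economically by verifying that the $\Th$-images of $\xi$, $\upsilon$, $\mu$, $\gamma$ agree, modulo the monoidal constraints of $\Th$, with the corresponding data on $\F$; any axiom in $\esp$ is then the $\Th$-reflection of an axiom already proven for $\F$ in Proposition~\ref{prop:9}, and faithfulness of $\Th$ transports it back. The main (minor) obstacle is checking this compatibility for $\xi$, which amounts to seeing that the strong monoidal constraints of $\Th$ for both tensors commute with the relevant universal convolution and coend maps; this is routine from the definitions. Once these checks are made, $\Th$ is strong duoidal by construction, and the last clause follows immediately: the essential image of $\Th$ in $\F$ is closed under both $\comp$ and $\conv$ and contains the units, hence forms a sub-duoidal category of $\F$ to which $\esp$ is equivalent via the corestriction of $\Th$.
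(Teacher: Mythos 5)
Your overall architecture is the same as the paper's: produce the duoidal data on $\esp$, prove $\Th$ strong duoidal, and then let faithfulness of $\Th$ reflect the coherence axioms from $\F$ back to $\esp$ (the paper does exactly this, and your treatment of the unit constraints, while more roundabout than necessary --- both units of $\esp$ are literally $y_1$, so $\upsilon$ is an identity and normality is immediate --- is harmless). The genuine gap is in the one step that carries all the content: the construction of the interchange maps $\xi$ in $\esp$. Transporting $\xi_\F$ backwards along $\Th$ is not available, because $\Th$ is only faithful and full on \emph{isomorphisms}, and $\xi$ is not invertible (if it were, the structure would be braided, cf.\ Example~\ref{ex:1}); so everything hinges on your claim that the construction in Proposition~\ref{prop:9} ``mimics'' in $\esp$ using only the coend presentation \eqref{eq:31}, the universal property of Day convolution, and the product on $\mathbb P$. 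That claim is not correct as stated. The proof of Proposition~\ref{prop:9} works through the equivalence $\F \simeq \cat{End}_\omega(\cat{Set})$: it defines $\xi$ by natural families $FG(A)\times HK(B)\to(F\conv H)(G\conv K)(A\times B)$, evaluating convolutions at \emph{arbitrary} sets such as $GA\times KB$; for species the analogous embedding into endofunctors of $\cat{Set}$ is faithful but not full, so a map built at that level is a priori only a morphism between the $\Th$-images in $\F$, not a morphism of species. Working instead directly with \eqref{eq:31}, what one actually needs is a suitably equivariant family of maps $Y^{\oplus n}\conv W^{\oplus m}\to (Y\conv W)^{\oplus nm}$, and producing these is the real work: it requires the distributivity (rig) structure on $\esp$ relating $\conv$ and $\oplus$, which is exactly what the paper develops in Proposition~\ref{prop:12} and uses in \eqref{eq:38}. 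That this is a genuinely extra ingredient, and not a formal consequence of the data you list, is witnessed by Remark~\ref{rk:2}: for a non-cartesian symmetric monoidal $\V$, the category $[\mathbb P,\V]$ still has the coend formula for $\comp$, Day convolution for $\conv$, and the product on $\mathbb P$, yet \emph{no} duoidal structure exists, precisely because the analogue of Proposition~\ref{prop:12} fails.

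A secondary understatement: even once $\xi$ is constructed in $\esp$, the compatibility of $\Th$ with the two interchanges (the square \eqref{eq:44}) is not quite the ``routine'' check you describe. The paper's verification reduces by cocontinuity to representables $X=y_n$, $W=y_m$, reformulates the square as \eqref{eq:36}, and then uses that $\Th$ is a strong morphism of \emph{rig} categories to identify the bottom composite with the $(\F,\times,\conv)$-analogue of \eqref{eq:38}, matching it against the explicit formula \eqref{eq:45} for $\xi$ in $\F$. This is a short but genuine computation, and it is the step that licenses the faithfulness argument you (correctly) want to use to import the duoidal axioms into $\esp$. So the skeleton of your argument is right, but as written it presupposes exactly the construction and comparison that constitute the proof.
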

\begin{proof}
  As in Proposition~\ref{prop:9}, the units $J$ and $I$ of the $\conv$
  and $\comp$ monoidal structures on $\esp$ are both the representable
  $y_1$, and so the only difficulty lies in defining the interchange
  maps
  $\xi \colon (X \comp Y) \conv (W \comp Z) \rightarrow (X \conv W)
  \comp (Y \conv Z)$.
  Since $\conv$ is cocontinuous in each variable, and $\comp$ is
  cocontinuous in its first variable, it is enough to consider the
  case where $X$ and $W$ are representable; we must thus define maps
  $(y_n \comp Y) \conv (y_m \comp Z) \rightarrow (y_n \conv y_m) \comp
  (Y \conv Z) \cong y_{nm} \comp (Y \conv Z)$
  natural in $n,m \in \mathbb P$. By definition of $\comp$, this is
  equally to give natural maps
  $Y^{\oplus n} \conv Z^{\oplus m} \rightarrow (Y \conv Z)^{\oplus
    nm}$. We obtain these using the rig structure on $\esp$, as the
  composites
  \begin{equation}
    \label{eq:38}
    Y^{\oplus n} \conv Z^{\oplus m} \xrightarrow{\nu_r} (Y^{\oplus n} \conv Z)^{\oplus m} \xrightarrow{(\nu_\ell)^{\oplus m}} ((Y \conv Z)^{\oplus
    n})^{\oplus m} = (Y \conv Z)^{\oplus nm}
  \end{equation}
  where the two non-identity maps are built from repeated applications
  of the opmonoidal constraints $\nu_r$ and $\nu_\ell$ respectively.

  Leaving aside the duoidal coherence axioms in $\esp$ for the moment,
  we next show strong duoidality of $\Th \colon \esp \rightarrow \F$.
  Preservation of unit coherences is straightforward, so it
  suffices to show that each square of the form:
  \begin{equation}
    \label{eq:44}
    \cd{
      {(\Th(X) \comp \Th(Y)) \conv (\Th(W) \comp \Th(Z))} \ar[r]^-{\xi} \ar[d]_{\cong} &
      {(\Th(X) \conv \Th(W)) \comp (\Th(Y) \conv \Th(Z))} \ar@{<-}[d]^{\cong} \\
      {\Th((X \comp Y) \conv (W \comp Z))} \ar[r]^-{\Th(\xi)} &
      {\Th((X \conv W) \comp (Y \conv Z))}
    }
  \end{equation}
  commutes in $\F$. Note that, since $\Th$ is faithful, once we have
  this, we are done, since we may deduce the duoidal coherence axioms
  in $\esp$ from those in $\F$. To show the required commutativity,
  observe that all vertices in the preceding diagram are cocontinuous in the
  variables $X$ and $W$; so it is enough to consider the case where $X
  = y_n$ and $W = y_m$. Using again the definitions~\eqref{eq:29}
  and~\eqref{eq:31} of the $\comp$ tensor products, this means showing
  that each square
  \begin{equation}
    \label{eq:36}
    \cd{
      {\Th(Y)^n \conv \Th(Z)^m} \ar[r]^-{\xi} \ar[d]_{\cong} &
      {(\Th(Y) \conv \Th(Z))^{nm}} \ar@{<-}[d]^{\cong} \\
      {\Th(Y^{\oplus n} \conv Z^{\oplus m})} \ar[r]^-{\Th(\xi)} &
      {\Th((Y \conv Z)^{\oplus nm})}
    }
  \end{equation}
  is commutative. Since $\Th$ is a strong morphism of rig categories,
  the lower composite is the analogue of the map~\eqref{eq:38} for the
  rig $(\F, \times, \conv)$; as such, it is the map whose $(i,j)$th
  projection is $\pi_i \conv \pi_j \colon \Th(Y)^n \conv \Th(Z)^m
  \rightarrow \Th(Y) \conv \Th(Z)$; which, comparing with the
  formula~\eqref{eq:45} for interchange in $\F$, is precisely the upper map.
\end{proof}
\begin{Rk}
  \label{rk:2}
  The normal duoidal structure on $\esp = [\mathbb P, \cat{Set}]$
  described by this proposition can in fact be constructed on
  $[\mathbb P, \V]$ for any cocomplete cartesian closed $\V$; for the
  case of $\V$ being the category of simplicial sets, the interchange
  maps of this duoidal structure were described
  in~\cite[Proposition~1.20]{Dwyer2013The-Boardman-Vogt}. Note that if
  $\V$ is a non-cartesian symmetric monoidal category, then
  $[\mathbb P, \V]$ bears the two monoidal structures $(\comp, \I)$
  and $(\conv, \J)$ but is \emph{not} duoidal; the essential problem
  is that Proposition~\ref{prop:12} fails in this case, which
  obstructs the construction of the interchange maps.
\end{Rk}

\subsection{The Boardman--Vogt tensor product}
\label{sec:boardman-vogt-tensor}
The normal duoidal structure on the locally presentable $\esp$ just
described is $\ast$-symmetric and $\ast$-closed and has a
$\comp$-accessible tensor product, whence by
Propositions~\ref{prop:22} and~\ref{prop:17} as before,
$\esp\text-\cat{Cat}$ admits the commuting monoidal structure
$(\comm, \mathcal I)$. In particular, restricting to the one-object
case, we obtain a tensor product of symmetric operads on $\cat{Set}$,
called the \emph{Boardman--Vogt} tensor product. By transcribing the
explicit calculations of Section~\ref{sec:explicit-formulae} above, we
re-find the well-known formula describing this tensor product: given
operads $\O$ and $\P$, their tensor is obtained from the operadic
coproduct $\O + \P$ by quotienting out by the equalities
\begin{equation}
  \psi(\phi, \dots, \phi) =
  \phi(\psi, \dots, \psi) \cdot \sigma
\end{equation}
for each $\psi \in \O(n) \subset (\O + \P)(n)$ and $\phi \in
\P(m) \subset (\O + \P)(m)$; here $(\thg) \cdot \sigma$
indicates the action on $(\O + \P)(nm)$ of the symmetry $\sigma \colon
nm \rightarrow mn$.

The strong duoidal $\Th \colon \esp \rightarrow \F$ of the preceding
proposition has right adjoint given by restriction along
$H \colon \mathbb P \rightarrow \mathbb F$, and so is the left adjoint of a
duoidal adjunction $\Th \dashv [H, 1] \colon \F \rightarrow \esp$;
whence by Proposition~\ref{prop:4},
$\Th_\ast \colon (\esp\text-\cat{Cat}, \comm) \rightarrow
(\F\text-\cat{Cat}, \comm)$
is a strong monoidal $2$-functor---and so sends the Boardman--Vogt
tensor product of symmetric operads to the tensor product of theories,
as claimed above.

\section{Normalizing duoidal categories}
\label{sec:norm-duoid-categ}
Before giving our remaining examples, we break off briefly in order to
describe a construction which will be useful in producing them. This
construction assigns to any reasonably well-behaved duoidal category
with non-isomorphic units $\I$ and $\J$ a \emph{normalization} in
which $\I$ and $\J$ are forced to coincide. It involves taking
bimodules over the bimonoid $\I$, and is in fact an instance of a more
general bimodule construction, which we now explain.

\subsection{Bimodules over a bimonoid}
\label{sec:bimod-over-bimon}
Suppose that the maps $e \colon \J \rightarrow M \leftarrow M \conv M \colon m$
and $u \colon \I \leftarrow M \rightarrow M \comp M \colon d$ exhibit
$M$ as a bimonoid in the duoidal $\V$. We write $\cat{Bimod}_M$ for
the category of algebras for the monad $M \conv (\thg) \conv M$ on
$\V$, and call its objects \emph{$M$-bimodules}. By exploiting the
$\comp$-comonoid structure on $M$, we may lift the $\comp$-monoidal
structure on $\V$ to one on $\cat{Bimod}_M$, whose unit is $\I$
with the action
\begin{equation}
  M \conv \I \conv M \xrightarrow{u \conv \I \conv u} \I \conv \I \conv \I \xrightarrow{\mu \cdot (\I \conv \mu)} \I
\end{equation}
and whose binary tensor of
$a \colon M \conv A \conv M \rightarrow A$ and $b \colon M \conv B
\conv M \rightarrow B$ is $A \comp B$ equipped with the following
action---wherein we temporarily write $\comp$ as juxtaposition:
\begin{equation}
  M \conv AB \conv M \xrightarrow{d \conv AB \conv d} MM \conv AB \conv MM \xrightarrow{\xi \cdot (\xi \conv \I)} (M \conv A \conv M)(M \conv B \conv M) \xrightarrow{a \conv b}  AB\rlap{ .}
\end{equation}
Suppose now that $\V$ admits reflexive coequalizers which are
preserved by $\conv$ in each variable. Under these circumstances,
$\cat{Bimod}_M$ also admits the tensor product $\conv_{M}$, whose unit
is $M$ itself with the regular action, and whose binary tensor,
constructed by a reflexive coequalizer
$A \conv M \conv B \rightrightarrows A \conv B \twoheadrightarrow A
\conv_M B$,
classifies $M$-bilinear maps. We claim that, in fact,
$(\cat{Bimod}_M, \conv_M, M, \comp, \I)$ is a duoidal category.

The constraint maps in $\cat{Bimod}_M$ are obtained as follows. We
take $\upsilon$ and $\gamma$ to be $u \colon M \rightarrow \I$ and
$d \colon M \rightarrow M \comp M$ respectively; these are easily seen
to be maps of $M$-bimodules. To define $\mu$, we observe that
$\mu \colon \I \conv \I \rightarrow \I$ in $\V$ is $\I$-bilinear, and
so by restriction along the $\conv$-monoid map
$u \colon M \rightarrow \I$ also $M$-bilinear; it thus descends to the
required $\mu \colon \I \conv_M \I \rightarrow \I$ in $\cat{Bimod}_M$.
Finally, we must construct the interchange maps; so let
$A,B,C,D \in \cat{Bimod}_M$, and consider the diagram
\begin{equation}
  \cd[@R-0.3em]{
    {(A \comp B) \conv (C \comp D)} \ar[r]^-{\xi} \ar[d]_{q} &
    {(A \conv C) \comp (B \conv D)} \ar[d]^{q \comp q} \\
    {(A \comp B) \conv_M (C \comp D)} \ar@{-->}[r] &
    {(A \conv_M C) \comp (B \conv_M D)}
  }
\end{equation}
in $\V$, where the $q$'s to the left and right are the universal
$M$-bilinear maps. A straightforward calculation from the duoidal
axioms shows that the upper composite is also a $M$-bilinear map;
whence there is a unique induced map in $\cat{Bimod}_M$ as displayed,
which we take to be the component of $\xi$ for $\cat{Bimod}_M$. Now
each of the duoidal axioms in $\cat{Bimod}_M$ is either exactly the
corresponding axiom in $\V$, or else a direct consequence of it after
descending along the regular epimorphisms $q$.

\subsection{Normalization}
\label{sec:normalization-1}
If we specialize to the case of the preceding construction where $M$ is the unit object $I$ made
into a bimonoid via the structure maps
$\upsilon \colon \J \rightarrow \I \leftarrow \I \conv \I \colon \mu$
and
$1_\I \colon \I \leftarrow \I \rightarrow \I \comp \I \colon
\lambda_\I$,
we see that the induced duoidal structure on $\cat{Bimod}_\I$ has its
structure cell $\upsilon$ an \emph{identity}; as such, it is normal,
and so we can give:

\begin{Defn}
  \label{def:12}
  If $\V$ is duoidal with reflexive coequalizers preserved
  by $\conv$ in each variable, then its \emph{normalization} is
  the normal duoidal category
  $\mathrm{N}(\V) = (\cat{Bimod}_\I, \conv_\I, \I,
  \comp, \I)$.
\end{Defn}

If $\V$ is $\conv$-biclosed with equalizers, then the
$\conv_M$-monoidal structure on $\cat{Bimod}_M$ is also
biclosed (see~\cite{Barr1996The-Chu-construction} for example); in
particular, this means that the $\conv$-biclosedness of a duoidal
structure is preserved under normalization. On the other hand, the
$\conv$-braidedness need \emph{not} be preserved; for this we need to
modify the construction.

Suppose that $\V$ is a $\conv$-braided monoidal category, and that $M$
is a bimonoid in $\V$ with commutative underlying $\conv$-monoid.
There is a full embedding $\cat{Mod}_M \rightarrow \cat{Bimod}_M$ of
the category of left $M$-modules into the category of $M$-bimodules
which sends $\ell \colon M \conv A \rightarrow A$ to the bimodule
$\ell \colon M \conv A \rightarrow A \leftarrow A \conv M \colon \ell
c$. The image of $\cat{Mod}_M$ under this embedding is closed under both
the tensor product $\conv_M$ and also, by~\eqref{eq:2}, under the
lifted $\comp$-monoidal structure; whence $\cat{Mod}_M$ acquires a
duoidal structure $(\conv_M, \comp)$. Moreover, for any $A$ and $B$ in
$\cat{Mod}_M$, the braidings
$c \colon A \conv B \rightarrow B \conv A$ descend to maps
$c \colon A \conv_M B \rightarrow B \conv_M A$ in $\cat{Mod}_M$ which
witness the duoidal structure as $\conv$-braided. As the terminal
$\conv$-monoid $\I$ in $M$ is commutative, we may  define:
\begin{Defn}
  \label{def:13}
  Let $\V$ be $\conv$-braided duoidal with reflexive
  coequalizers preserved by $\conv$ in each variable. The \emph{braided
    normalization} $\mathrm{N}_c(\V)$ is the $\conv$-braided normal
  duoidal category $(\cat{Mod}_\I, \conv_\I, \I, \comp, \I)$.
\end{Defn}

\subsection{Normalization and commutativity}
\label{sec:norm-comm}
With an eye to our applications, it will be convenient to have a
description of commutativity in $\mathrm{N}(\V)$ and $\mathrm{N}_c(\V)$
in terms of data in $\V$. In fact it suffices to consider only
$\mathrm{N}(\V)$ as $\mathrm{N}_c(\V)$ sits inside this as a full sub-duoidal
category.

\begin{Lemma}
  \label{lem:2}
  Let $\V$ be duoidal with normalization
  $\mathrm{N}(\V)$, and let $C$ be a $\comp$-monoid in
  $\mathrm{N}(\V)$. A cospan
  $f \colon A \rightarrow C \leftarrow B \colon g$ in
  $\mathrm{N}(\V)$ is commuting  just when the diagram
  \begin{equation}\label{eq:8}
    \cd[@!C@-0.5em@R-0.5em]{
      & A \comp B \ar[r]^-{f \comp g} & C \comp C \ar[dr]^-{m} \\
      A \conv B \ar[ur]^-{\bar \sigma} \ar[dr]_-{\bar \tau} & & & C\\
      & B \comp A \ar[r]^-{g \comp f} & C \comp  C \ar[ur]_-{m}
    }
  \end{equation}
  commutes in $\V$, where $\bar \sigma$ and $\bar \tau$ are the
  composites
  \begin{equation}
    A \conv B \cong (A \comp \I) \conv (\I \comp B) \xrightarrow{\xi}
    (A \conv \I) \comp (\I \conv B) \xrightarrow{r \comp \ell} 
    A \comp B
  \end{equation}
  \begin{equation}
    A \conv B \cong (\I \comp A) \conv (B \comp \I) \xrightarrow{\xi}
    (\I \conv B) \comp (A \conv \I) \xrightarrow{\ell \comp r} 
    A \comp B
  \end{equation}
  constructed from interchange in $\V$ and the $\I$-bimodule
  structures on $A$ and $B$.
\end{Lemma}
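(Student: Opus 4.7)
The plan is to reduce commutativity of the cospan $(f,g)$ in $\mathrm{N}(\V)$ to commutativity of the hexagon~\eqref{eq:8} in $\V$ by precomposing with the canonical quotient map $q \colon A \conv B \twoheadrightarrow A \conv_\I B$. By Definition~\ref{def:17}, the cospan $(f,g)$ is commuting in $\mathrm{N}(\V)$ precisely when the hexagon~\eqref{eq:22} commutes in the underlying category of $\mathrm{N}(\V)$, with $\sigma$ and $\tau$ computed in $\mathrm{N}(\V)$ via the formulae~\eqref{eq:21}. The underlying category of $\mathrm{N}(\V)$ is $\cat{Bimod}_\I$, which sits as a full subcategory of $\V$ via the forgetful functor; moreover the reflexive coequalizer constructing $A \conv_\I B$ in Section~\ref{sec:bimod-over-bimon} is formed in $\V$, so $q$ is epimorphic in $\V$. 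Hence the $\mathrm{N}(\V)$-hexagon commutes if and only if its precomposition with $q$ does so in $\V$.

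It thus suffices to verify the identities $\sigma \circ q = \bar\sigma$ and $\tau \circ q = \bar\tau$. For $\sigma$, one traces through the formula~\eqref{eq:21} in $\mathrm{N}(\V)$ in three steps. First, since the unit constraints for $\comp$ are the same in $\V$ and in $\mathrm{N}(\V)$ (the $\comp$-monoidal structure on bimodules is lifted from $\V$), naturality of $q$ slides it past the first unit isomorphism, producing $A \conv B \xrightarrow{\cong} (A \comp \I) \conv (\I \comp B) \xrightarrow{q}$. Second, by the very construction of the interchange map for $\mathrm{N}(\V)$ in Section~\ref{sec:bimod-over-bimon}, $\xi_{\mathrm{N}} \circ q = (q \comp q) \circ \xi$, which lets us transport the $\xi$-step from $\mathrm{N}(\V)$ back to $\V$. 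Third, the unit isomorphisms $A \conv_\I \I \cong A$ and $\I \conv_\I B \cong B$ in $\mathrm{N}(\V)$ are, by definition of $\conv_\I$ as a coequalizer, the unique factorizations through $q$ of the right and left $\I$-actions $r \colon A \conv \I \to A$ and $\ell \colon \I \conv B \to B$. Chaining these three observations together gives $\sigma \circ q = \bar\sigma$; an entirely analogous calculation, starting instead from the unit isomorphisms $A \cong \I \comp A$ and $B \cong B \comp \I$, yields $\tau \circ q = \bar\tau$.

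The main obstacle is the last step: the bookkeeping required to see that the unit constraints for $\conv_\I$ in $\mathrm{N}(\V)$ literally \emph{are} the factorizations through $q$ of $r$ and $\ell$. However, this is immediate from the universal property of the coequalizer defining $A \conv_\I B$, together with the fact that $r$ and $\ell$ are $\I$-bilinear and so descend through $q$. No further duoidal axioms beyond those already established during the construction of $\mathrm{N}(\V)$ are required.
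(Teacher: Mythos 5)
Your argument is correct and is essentially the paper's own proof: faithfulness of the forgetful functor $\cat{Bimod}_\I \rightarrow \V$ together with precomposition along the epimorphism $q \colon A \conv B \twoheadrightarrow A \conv_\I B$ reduces the commutativity hexagon in $\mathrm{N}(\V)$ to the hexagon~\eqref{eq:8} in $\V$, and your explicit verification that $\sigma_{\mathrm{N}(\V)} \circ q = \bar\sigma$ and $\tau_{\mathrm{N}(\V)} \circ q = \bar\tau$ (via naturality of $q$, the defining square for $\xi$ in $\cat{Bimod}_\I$, and the descent of the actions $r$ and $\ell$ to the $\conv_\I$-unit constraints) merely spells out what the paper leaves implicit. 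One small correction: the forgetful functor does not exhibit $\cat{Bimod}_\I$ as a \emph{full} subcategory of $\V$---it is only faithful---but faithfulness is all your argument actually uses, so nothing is affected.
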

\begin{proof}
  Since the forgetful $\cat{Bimod}_\I \rightarrow \V$ is
  faithful, to ask that~\eqref{eq:3} commutes in
  $\cat{Bimod}_\I$ is equally to ask that its precomposition
  with the epimorphism
  $A \conv B \twoheadrightarrow A \conv_\I B$ commutes in $\V$;
  which is exactly to ask that~\eqref{eq:8} commutes.
\end{proof}
\begin{Cor}
  \label{cor:1}
  Let $\V$ be duoidal with normalization $\mathrm N(\V)$. The
  forgetful $U \colon \mathrm N(\V) \rightarrow \V$ induces an
  isomorphism between the category of commutative $\comp$-monoids in
  $\mathrm N(\V)$ and the category of duoids in $\V$.
\end{Cor}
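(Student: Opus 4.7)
The plan is to deduce the corollary by applying Proposition~\ref{prop:11} to the normal duoidal category $\mathrm{N}(\V)$, and then identifying duoids in $\mathrm{N}(\V)$ with duoids in $\V$ under the forgetful $U$. Since $\mathrm{N}(\V)$ is normal, Proposition~\ref{prop:11} yields an isomorphism between its categories of commutative $\comp$-monoids and of duoids, commuting with the forgetful functor to $\cat{Mon}_\comp(\mathrm{N}(\V))$. So it suffices to show that $U$ induces an isomorphism $\cat{Duoid}(\mathrm{N}(\V)) \cong \cat{Duoid}(\V)$.

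For that identification, the key input is the standard correspondence between $\conv_\I$-monoids in $\cat{Bimod}_\I$ and $\conv$-monoids $(A, \iota, \nu)$ in $\V$ equipped with a $\conv$-monoid map $\phi \colon \I \to A$; under this correspondence, the $\I$-bimodule actions on $A$ are recovered as $\nu \cdot (\phi \conv A)$ and $\nu \cdot (A \conv \phi)$, while the $\I$-balanced map underlying the $\conv_\I$-multiplication is the multiplication $\nu$ itself. A duoid in $\mathrm{N}(\V)$ is a $\comp$-monoid in $\cat{Mon}_{\conv_\I}(\mathrm{N}(\V))$: its $\comp$-unit $e \colon \I \to A$ must coincide with $\phi$, because $e$ is required to be a $\conv_\I$-monoid map out of the $\conv_\I$-unit $\I$. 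So the data of a duoid in $\mathrm{N}(\V)$ unpacks along $U$ to exactly the data of a duoid in $\V$ in the sense of Definition~\ref{def:10}: a $\comp$-monoid $(A, e, m)$, a $\conv$-monoid $(A, \iota = e \cdot \upsilon, \nu)$, with $e$ a $\conv$-monoid map, and the remaining unit and interchange compatibilities.

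The main obstacle is to verify that the duoid compatibility axiom~\eqref{eq:7} in $\mathrm{N}(\V)$ translates to its counterpart in $\V$ under $U$. This follows from the construction of the duoidal structure on $\cat{Bimod}_\I$ in Section~\ref{sec:bimod-over-bimon}: the interchange $\xi_\mathrm{N}$ in $\mathrm{N}(\V)$ is defined as the unique factorisation of $\xi$ through the regular epimorphic coequalizer defining $\conv_\I$, so the equality asserted by~\eqref{eq:7} in $\cat{Bimod}_\I$ holds if and only if the corresponding equality in $\V$ does. The remaining compatibility axioms translate formally by the universal property of the same coequalizers. Assembling these observations and composing with Proposition~\ref{prop:11} applied to $\mathrm{N}(\V)$ yields the claimed isomorphism, which manifestly commutes with $U$.
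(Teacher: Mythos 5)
Your proposal is correct and follows essentially the same route as the paper: apply Proposition~\ref{prop:11} to the normal duoidal $\mathrm{N}(\V)$, then identify duoids in $\mathrm{N}(\V)$ with duoids in $\V$ via the standard correspondence between $\conv_\I$-monoids in $\cat{Bimod}_\I$ and $\conv$-monoids under $\I$. Your extra remarks (that the $\comp$-unit is forced to equal $\phi$, and that the duoid axiom~\eqref{eq:7} transfers because $\xi_{\mathrm{N}}$ is defined by descent along the regular epimorphism $A \conv B \twoheadrightarrow A \conv_\I B$) just make explicit the "It follows that" step which the paper leaves implicit.
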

\begin{proof}
  By Proposition~\ref{prop:11}, it suffices to show that $U$ induces
  an isomorphism between the categories of duoids in $\mathrm N(\V)$
  and in $\V$. To give a $\conv_\I$-monoid in $\mathrm N(\V)$
  is, by a well-known calculation, to give an $\conv$-monoid
  $C$ in $\V$ together with an $\conv$-monoid morphism
  $\I \rightarrow C$. It follows that $\comp$-monoids
  in $\cat{Mon}_{\conv_\I}(\mathrm{N}(\V))$ may be identified
  via $U$ with $\comp$-monoids in $\cat{Mon}_\conv(\V)$.
\end{proof}

\section{Example: strong and commutative monads}
\label{sec:exampl-comm-monads}
Another way of viewing the algebraic theories of
Section~\ref{sec:exampl-algebr-theor} is as finitary monads on
$\cat{Set}$. All aspects of our framework for commutativity may be
re-expressed in this context in a purely monad-theoretic form; this
then allows them to be generalised further to the context of
\emph{strong monads} on any monoidal category $\V$. In this
monad-theoretic setting, the theory of commutativity is due
to~\cite{Kock1970Monads}; in this section, we show how to reconstruct
it from our general framework.

\subsection{The duoidal category of $\kappa$-accessible endofunctors}
\label{sec:duoid-categ-kappa}
In this section, we suppose that $\V$ is a monoidal biclosed category
which is \emph{locally $\kappa$-presentable as a closed
  category}~\cite{Kelly1982Structures}; this means that the category
$\V$ is locally $\kappa$-presentable, with the $\kappa$-presentable
objects being closed under nullary and binary tensor. We will
establish a framework for commutativity for monads on $\V$ which are
\emph{$\kappa$-accessible}---meaning that their underlying endofunctor
preserves $\kappa$-filtered colimits. This covers most cases of
practical interest; in Section~\ref{sec:commutative-monads}, we will
see another approach which can deal with non-accessible monads on
non-locally presentable categories.

We let $\cat{End}_\kappa(\V)$ denote the category of
$\kappa$-accessible endofunctors of $\V$. If $\V_\kappa$ is a small
skeleton of the subcategory of $\kappa$-presentable objects in $\V$,
then restriction and left Kan extension along the inclusion
$\V_\kappa \rightarrow \V$ establishes an equivalence between
$\cat{End}_\kappa(\V)$ and $[\V_\kappa, \V]$. Since $\V_\kappa$ is
closed under the monoidal structure of $\V$, Day convolution gives a
biclosed monoidal structure on $[\V_\kappa, \V]$ and so, by
transporting across the equivalence, a biclosed monoidal structure
$(\conv, \J)$ on $\cat{End}_\kappa(\V)$. Since $\kappa$-accessible
endofunctors compose, $\cat{End}_\kappa(\V)$ also has its composition
monoidal structure $(\comp, \I)$; now generalising
Proposition~\ref{prop:9}, we have:
\begin{Prop}\label{prop:19}
$(\cat{End}_\kappa(\V), \conv, \J, \comp, \I)$ is a duoidal category.
\end{Prop}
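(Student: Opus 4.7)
The plan is to imitate the proof of Proposition~\ref{prop:9}, of which this is a direct generalization: there $\V = \cat{Set}$ and $\kappa = \omega$, but the argument used only Day convolution plus the fact that composition of endofunctors lifts to $\cat{End}_\omega(\cat{Set})$. Via the equivalence $\cat{End}_\kappa(\V) \simeq [\V_\kappa, \V]$, Day convolution for the monoidal category $\V_\kappa$ furnishes the biclosed monoidal structure $(\conv, \J)$, and composition of $\kappa$-accessible endofunctors furnishes $(\comp, \I)$; the task reduces to exhibiting $\conv$ and $\J$ together with their coherences as oplax $\comp$-monoidal.

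First, I would supply the unit data. The comparison $\upsilon\colon \J \to \I$ is, under the equivalence, the natural transformation $\V(I, \thg) \cdot I \Rightarrow \mathrm{id}_\V$ whose $X$-component is the evaluation map $\V(I, X) \cdot I \to X$ composed with the unit isomorphism $X \otimes I \cong X$. The remaining unit constraints $\mu \colon \I \conv \I \to \I$ and $\gamma\colon \J \to \J \comp \J$ are then forced by the oplax monoidal axioms.

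The main construction is the interchange $\xi\colon (F \comp G) \conv (H \comp K) \to (F \conv H) \comp (G \conv K)$. By the universal property of convolution, giving $\xi$ in $[\V_\kappa, \V]$ is equivalent to giving a natural family $FGA \otimes HKB \to [(F \conv H) \comp (G \conv K)](A \otimes B)$ for $A, B \in \V_\kappa$, which I would define as the composite
\begin{equation*}
FGA \otimes HKB \xrightarrow{u_{GA, KB}} (F \conv H)(GA \otimes KB) \xrightarrow{(F \conv H)(u_{A,B})} (F \conv H)((G \conv K)(A \otimes B))
\end{equation*}
in exact parallel with the formula appearing in Proposition~\ref{prop:9}. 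Here the $u$'s are the universal maps for convolution.

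Finally, the duoidal category axioms must be verified. Each one is an equation between two natural transformations with source a Day convolution, so by the universal property it reduces to an equation between pointwise composites in $\V$; these then follow by routine diagram-chasing from naturality of the $u$'s together with the associativity and unit constraints of $\otimes$ in $\V$ and $\V_\kappa$. I expect the main obstacle to be bookkeeping rather than any substantive difficulty; in particular, since $\V_\kappa$ is not assumed braided, it is worth noting that none of the relevant axioms require a braiding, as inspection reveals that $\xi$ and its coherences involve only parallel tensorings of the factors.
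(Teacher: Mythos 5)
Your strategy is the same as the paper's: transport Day convolution along $\cat{End}_\kappa(\V)\simeq[\V_\kappa,\V]$ for $(\conv,\J)$, take composition for $(\comp,\I)$, and define the interchange by $u_{GA,KB}$ followed by $(F\conv H)(u_{A,B})$ --- which is precisely the paper's formula. However, as written your construction of $\xi$ is not well-defined. The universal natural family supplied by Day convolution on $[\V_\kappa,\V]$ consists of the components $u_{A,B}\colon FA\otimes GB\to(F\conv G)(A\otimes B)$ for $A,B\in\V_\kappa$ only, whereas your first factor $u_{GA,KB}$ is taken at the objects $GA$ and $KB$, which are in general not $\kappa$-presentable. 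The missing step --- and it is the observation with which the paper opens its proof and on which everything else rests --- is that, because all functors involved are $\kappa$-accessible (and $\otimes$ is biclosed), any natural family $FA\otimes GB\to H(A\otimes B)$ is uniquely determined by its components at $\kappa$-presentable $A$ and $B$; consequently the transported tensor $\conv$ is genuinely convolution in $[\V,\V]$, so that the universal maps $u$ exist, are natural, and are universal at \emph{all} objects of $\V$. Only with this in hand does $u_{GA,KB}$ make sense, and only then does your final appeal to ``the universal property of convolution'' legitimately reduce the duoidal axioms to componentwise checks.

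A second, smaller gap: you assert that $\mu\colon\I\conv\I\to\I$ and $\gamma\colon\J\to\J\comp\J$ are ``forced by the oplax monoidal axioms''. That reasoning is imported from Proposition 9, where it is valid because there $\upsilon\colon\J\to\I$ is invertible; in the present setting $\J\not\cong\I$ in general, so these maps are not determined by $\upsilon$ --- they are extra data that must be constructed and whose $\conv$-monoid and $\comp$-comonoid axioms must be verified. The paper produces $\upsilon$ and $\mu$ at one stroke by noting that, by the universal property of convolution, $\conv$-monoids in $\cat{End}_\kappa(\V)$ are lax monoidal endofunctors, so the strict monoidal identity functor carries a canonical $\conv$-monoid structure; and it defines $\gamma$ via the universal property of $\J$ as the map corresponding to the composite $Jj\cdot j\colon I\to JI\to JJI$ in $\V$. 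Supplying these constructions, together with the accessibility observation above, would close the gaps; the rest of your outline matches the paper's argument.
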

\begin{proof}
  If $F, G, H \in \cat{End}_\kappa(\V)$, then any
  natural family $FA \otimes GB \rightarrow H(A \otimes B)$ is
  uniquely determined by its components at $\kappa$-presentable $A$
  and $B$; it follows that the $\conv$-tensor product on
  $\cat{End}_\kappa(\V)$ is in fact convolution in $[\V, \V]$. We now
  use this fact in constructing the duoidal structure maps.

  First, by the universal property of convolution, $\conv$-monoids in
  $\cat{End}_\kappa(\V)$ correspond to lax monoidal functors; so in
  particular, the strict monoidal identity functor is an $\conv$-monoid
  with structure maps $\upsilon \colon \J \rightarrow \I$ and
  $\mu \colon \I \conv \I \rightarrow \I$. Next, to give
  $\gamma \colon \J \rightarrow \J \comp \J$ is equally well, by the
  universal property of $\J$, to give a map $\I \rightarrow JJI$ in
  $\V$; which we take to be the composite
  $Jj \cdot j \colon \I \rightarrow JI \rightarrow JJI$. It remains to
  give the interchange maps
  $\xi \colon (F \comp G) \conv (H \comp K) \rightarrow (F \conv H)
  \comp (G \conv K)$,
  which are equally well specified by giving natural families of maps
  $FG(A) \otimes HK(B) \rightarrow (F \conv H)(G \conv K)(A \otimes B)$.
  We obtain these as the composites
  \begin{equation}
    FG(A) \otimes HK(B) \xrightarrow{\!\!u_{GA, KB}\!\!} 
    (F \conv H)(GA \otimes KB) \xrightarrow{\!\!(F\ast H)(u_{A,B})\!\!} 
    (F \conv H)(G \conv K)(A \otimes B)\rlap{ .}
  \end{equation}
  The axioms for a duoidal category are now easily verified by
  exploiting the universal property of convolution.
\end{proof}

\subsection{Bistrong endofunctors}
\label{sec:bistr-endof}
When $\V = \cat{Set}$ and $\kappa = \omega$, the duoidal structure
induced by the preceding result on $\cat{End}_\omega(\cat{Set})$
corresponds under the equivalence with $[\mathbb F, \cat{Set}]$ to the
duoidal structure of Proposition~\ref{prop:9}; in particular, it is
\emph{normal}. We may explicitly calculate the isomorphisms
$\I \conv F \rightarrow F$ and $F \conv \I \rightarrow F$ as
corresponding to the natural families
\begin{equation}\label{eq:10}
  \begin{aligned}
    t \colon X \times FY & \rightarrow F(X \times Y) &
    \text{and}\ \bar t \colon FX \times Y & \rightarrow F(X \times Y)  \\
    (x,y) & \mapsto F(\lambda z.\, (x,z))(y) & (x,y) & \mapsto
    F(\lambda z.\, (z,y))(x)
  \end{aligned}
\end{equation}
expressing the fact that any finitary endofunctor of $\cat{Set}$ has a
canonical \emph{strength} and \emph{costrength}~\cite{Kock1970Monads}.
The same argument pertains when $\V = \cat{Set}$ and $\kappa$ is any
infinite regular cardinal, and also in the (uninteresting) cases where
$\V = \cat{2}$ or $\V = \cat{1}$; but for any other $\V$, the map
$\upsilon \colon \J \rightarrow \I$ of the preceding proposition is
not an isomorphism, and so we must apply the normalization 
of Section~\ref{sec:norm-duoid-categ}. In this case, passing to the
category of bimodules over the $\conv$-monoid $\I$ means explicitly
equipping $F \in \cat{End}_\kappa(\V)$ with strength and costrength
maps
\begin{equation}
  t \colon A \otimes FB \rightarrow F(A \otimes B) \qquad \text{and} \qquad \bar t \colon FA \otimes B \rightarrow F(A \otimes B)\rlap{ ,}
\end{equation}
natural in $A$ and $B$ and satisfying evident associativity and unit
axioms. If we call such an $F$ \emph{bistrong}, then the normalization
of the duoidal $\cat{End}_\kappa(\V)$ is given by the category
$\cat{BiStr}_\kappa(\V)$ of $\kappa$-accessible bistrong endofunctors
and bistrength-preserving natural transformations, with the two
monoidal structures being composition $(\comp, \I)$ (where the
composition of strengths is the obvious one) and the quotiented
convolution $(\conv_\I, \I)$, whose value $F \conv_\I G$ is obtained by
identifying in $F \conv G$ the costrength of $F$ and the strength of $G$.

Now a $\comp$-monoid $(T, \eta, \mu)$ in $\cat{BiStr}_\kappa(\V)$ is
a bistrong $\kappa$-accessible monad on $\V$; and by the explicit
description of $\xi$ in $\cat{End}_\kappa(\V)$ given above, together
with Lemma~\ref{lem:2}, we see that a cospan $f \colon M \rightarrow T
\leftarrow N \colon g$ in $\cat{BiStr}_\kappa(\V)$ is commuting just
when the diagram
\begin{equation}
  \cd[@!C@-1em@R-0.5em]{
    & \sh{l}{0.5em}M(X \otimes NY) \ar[r]^-{M t} & MN(X \otimes Y) \ar[r]^-{f g_{X \otimes Y}} & \sh{r}{0.7em}TT(X \otimes Y) \ar[dr]^-{\mu} \\
    MX \otimes NY \ar[ur]^-{\bar t} \ar[dr]_-{t} & & & & T(X \otimes Y)\\
    & \sh{l}{0.5em}N(MX \otimes Y) \ar[r]^-{N\bar t} & NM(X \otimes Y) \ar[r]^-{gf_{X \otimes Y}} & \sh{r}{0.7em}TT(X \otimes Y) \ar[ur]_-{\mu}
  }
\end{equation}
commutes for all $X, Y \in \V$, while $T$ itself is
commutative just when
\begin{equation}
  \cd[@!C@-1em@R-0.5em]{
    & \sh{l}{0.5em}T(X \otimes TY) \ar[r]^-{Tt} & \sh{r}{0.5em}TT(X \otimes Y) \ar[dr]^-{\mu} \\
    TX \otimes TY \ar[ur]^-{\bar t} \ar[dr]_-{t} & & & T(X \otimes Y)\\
    & \sh{l}{0.5em}T(TX \otimes Y) \ar[r]^-{T\bar t} & \sh{r}{0.5em}TT(X \otimes Y) \ar[ur]_-{\mu}
  }
\end{equation}
commutes for all $X, Y \in \V$. We have thus reconstructed the
classical notion of \emph{commutative monad}~\cite{Kock1970Monads} on
a monoidal category $\V$. 

If the monoidal $\V$ is braided, then the duoidal
$\cat{End}_\kappa(\V)$ is $\conv$-braided, and so we have the option of
taking the \emph{braided} normalization of $\cat{End}_\kappa(\V)$.
This is the $\conv$-braided normal duoidal category of strong
$\kappa$-accessible endofunctors of $\V$, which
by~\cite[Section~1]{Kock1970Monads}
and~\cite[Remark~7.7]{Kelly1982Structures} is equally the category
$\V\text-\cat{End}_\kappa(\V)$ of $\kappa$-accessible $\V$-enriched
endofunctors of $\V$. The two tensor products giving the duoidal
structure now admit direct description as composition $(\comp, \I)$ and
\emph{$\V$-enriched} convolution $(\conv_\V, \I)$. A $\comp$-monoid is
thus a $\kappa$-accessible $\V$-monad on $\V$, while the notions of
commuting map and commutative $\V$-monad are exactly as above, where
the strength $t$ and costrength $\bar t$ are derived from the
$\V$-enrichment via the formulae of~\cite[Section~1]{Kock1970Monads}.

\subsection{Commuting tensor product}
\label{sec:comm-tens-prod-1}
Since $\cat{End}_\kappa(\V)$ is equivalent to $[\V_\kappa, \V]$, it is
locally presentable; since $\cat{BiStr}_\kappa(\V)$ is the category of
algebras for a cocontinuous monad on $\cat{End}_\kappa(\V)$ it is thus
also locally presentable, and in particular complete. The duoidal
structure thereon is $\ast$-biclosed, while the $\comp$-tensor product
preserves $\kappa$-filtered colimits in each variable; whence
$\cat{End}_\kappa(\V)\text-\cat{Cat}$ is cocomplete and free
$\cat{End}_\kappa(\V)$-categories exist by Proposition~\ref{prop:22}.
So by Proposition~\ref{prop:17}, $\cat{End}_\kappa(\V)\text-\cat{Cat}$
admits the commuting tensor product $\comm$. In particular, we have a
tensor product of $\kappa$-accessible monads on $\V$; the universal
property of this tensor product was first described
in~\cite[Definition~2]{Hyland2007Combining}.

Note that the restriction to $\kappa$-accessible monads is essential
to ensure existence of the tensor product; for \emph{unbounded}
monads---ones which, like the power-set monad on $\cat{Set}$, are not
$\kappa$-accessible for \emph{any} $\kappa$---it may be that their
commuting tensor product fails to exist entirely;
see~\cite[Theorem~22]{Goncharov2011A-counterexample}.

Finally, let us note the force of Proposition~\ref{prop:11} in the
context of this example. Together with Lemma~\ref{lem:2}, it tells us
that commutative $\comp$-monoids in $\cat{BiStr}_\kappa(\V)$ are the
same thing as duoids in $\cat{End}_\kappa(\V)$; while in the
$\conv$-braided case, it tells us that commutative $\comp$-monoids in
$\V\text-\cat{End}_\kappa(\V)$ are the same as $\conv$-commutative
monoids in $\cat{End}_\kappa(\V)$. This latter case reconstructs the
main Theorem~3.2 of~\cite{Kock1970Monads}: that a strong
$\kappa$-accessible monad on a braided monoidal category is
commutative just when it is a monoidal monad, which is then
automatically a braided monoidal monad. The non-braided case tells us
similarly that a bistrong $\kappa$-accessible monad on a monoidal
category is commutative just when it is a monoidal monad.

\section{Example: Freyd-categories}
\label{sec:bistrong-promonads}
Strong and commutative monads play a prominent role in computer
science, in particular in work on computational side-effects growing
out of Moggi's~\cite{Moggi1991Notions}. In this context, a useful
generalisation of strong monads are the \emph{arrows}
of~\cite{Hughes2000Generalising}, which find categorical expression as
the \emph{Freyd}-categories of~\cite{Levy2003Modelling}. The notion of
\emph{Freyd}-category is a slightly delicate one; the purpose of this
section is to show how it fits naturally into our framework.

\subsection{Duoidal categories from monoidales}
\label{sec:duoid-categ-from}
In order to define the duoidal categories giving rise to
Freyd-categories, we will appeal to the following general
construction. Let $(\M, \otimes, \I)$ be a monoidal
bicategory~\cite{Gordon1995Coherence}, and $(A, j, p)$ a monoidale (=
pseudomonoid) in $\M$ whose unit $j \colon \I \rightarrow A$ and
multiplication $p \colon A \otimes A \rightarrow A$ have right
adjoints $j^\conv \colon A \rightarrow \I$ and
$p^\conv \colon A \rightarrow A \otimes A$; since left adjoint
morphisms in a bicategory are often called \emph{maps}, we call $A$ a
\emph{map monoidale} in $\M$.

In this situation, the category $\M(A,A)$ bears two monoidal
structures; the first $(\comp, \I)$ is the composition of the
bicategory $\M$, while the second $(\conv, \J)$ is
convolution with respect to the monoidale $(A, j, p)$ and
comonoidale $(A, j^{\conv}, p^{\conv})$. The unit $\J$
is thus the composite
$jj^\conv \colon A \rightarrow \I \rightarrow A$, the tensor
$f \conv g$ of $f, g \colon A \rightarrow A$ is the composite
\begin{equation}
  A \xrightarrow{p^\conv} A \otimes A \xrightarrow{f \otimes g} A \otimes A \xrightarrow{p} A\rlap{ ,}
\end{equation}
and the coherence constraints are derived from the constraint
$2$-cells of the monoidale $A$ and their mates;
see~\cite[Proposition~4]{Day1997Monoidal}.

\begin{Prop}
  \label{prop:20}
  If $A$ is a map monoidale in the monoidal bicategory $\M$, then
  $(\M(A,A), \conv, \J, \comp, \I)$ is a duoidal category. Moreover,
  if $\M$ is a braided monoidal
  bicategory~\cite[Appendix~A]{McCrudden2000Balanced}, and $A$ a
  braided monoidale~\cite[Section~3]{McCrudden2000Balanced}, then this
  duoidal structure is $\conv$-braided; and if $\M$ is a biclosed
  monoidal bicategory which admits all right liftings and extensions,
  then the duoidal structure is $\conv$-biclosed and $\comp$-biclosed.
\end{Prop}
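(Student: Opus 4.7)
The plan is to transport the pseudomonoid structure on $A$ through the hom-category $\M(A,A)$, using the right adjoints $j^\conv$ and $p^\conv$ to turn the would-be comonoidale constraints of $A$ into genuine convolution data. The composition monoidal structure $(\comp, \I)$ is the standard bicategorical one on $\M(A,A)$. The convolution structure $(\conv, \J)$ is obtained as in~\cite{Day1997Monoidal}: the unit is $\J = j \circ j^\conv$, the binary tensor is $f \conv g = p \circ (f \otimes g) \circ p^\conv$, and the associativity and unit constraints arise from those of $(A,j,p)$ together with their mates under $j \dashv j^\conv$ and $p \dashv p^\conv$.

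The main step is to construct the interchange $\xi$ together with the unit cells $\mu, \upsilon, \gamma$. For the interchange $\xi \colon (X \comp Y) \conv (Z \comp W) \to (X \conv Z) \comp (Y \conv W)$, I would paste the pseudonatural interchange isomorphism $(XY) \otimes (ZW) \cong (X \otimes Z) \comp (Y \otimes W)$ coming from the monoidal bicategory structure on $\M$, wedged between $p^\conv$ and $p$, and then insert the unit $\eta \colon 1_{A \otimes A} \Rightarrow p^\conv p$ of $p \dashv p^\conv$ in the middle of the composite. Next, $\upsilon \colon \J \to \I$ is taken to be the counit of $j \dashv j^\conv$; this forces $\mu \colon \I \conv \I \to \I$ to identify with the counit $pp^\conv \Rightarrow 1_A$ of $p \dashv p^\conv$; and $\gamma \colon \J \to \J \comp \J$ is obtained by whiskering the unit $1_I \Rightarrow j^\conv j$ of $j \dashv j^\conv$ between $j$ and $j^\conv$.

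Verification of the duoidal axioms is then a pasting-diagram exercise in $\M$, in which each axiom reduces via the mate correspondence to a pseudomonoid axiom for $A$, a triangle identity for one of the two adjunctions, or a coherence axiom for $\otimes$ in $\M$. For the $\conv$-braided clause, a braided monoidale carries an invertible $2$-cell $\beta \colon p\, c_{A,A} \Rightarrow p$ satisfying hexagon axioms; whiskering $\beta$ between $p^\conv$ and the relevant $2$-cells yields a braiding on $\conv$, and compatibility with $\comp$ in the sense of~\eqref{eq:2} follows from the hexagon axioms together with pseudonaturality of the interchange $2$-cell for $\otimes$ relative to $c$. For the closed clause, the right adjoints $[X, \thg]_\ell$ and $[X, \thg]_r$ to $(\thg) \conv X$ and $X \conv (\thg)$ are built by standard Day convolution from the right liftings and extensions in $\M$, while the right adjoints to $(\thg) \comp f$ and $f \comp (\thg)$ are simply the right extension along $f$ and right lifting through $f$.

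The main obstacle will be the bookkeeping required to verify the duoidal coherence axioms in full generality; the individual axioms are large pastings, though each decomposes by the definition of $\xi$ and the mate correspondence into a manageable combination of pseudomonoid coherence for $A$, triangle identities, and coherence of $\otimes$ in $\M$.
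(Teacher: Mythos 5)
Your construction of the duoidal structure is exactly the paper's: the constraint cells $\upsilon = \epsilon_j$, $\mu = \epsilon_p$ (modulo the unit constraint for $\otimes$), $\gamma = j\eta j^\conv$, and the interchange $\xi$ obtained by inserting $\eta \colon 1 \Rightarrow p^\conv p$ into $p\bigl((X\comp Y)\otimes(Z \comp W)\bigr)p^\conv$ after applying the interchange isomorphism of $\otimes$, with the axioms reduced to pseudomonoid coherence, triangle identities and monoidal-bicategory coherence. The braided clause and the $\comp$-biclosed clause (right adjoints to $f\comp(\thg)$ and $(\thg)\comp f$ from right liftings and extensions) also match the paper, at the same level of detail.

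The one place your proposal falls short is the $\conv$-biclosedness. Writing $(\thg)\conv g = p\circ\bigl((\thg)\otimes g\bigr)\circ p^\conv$, right liftings and extensions only supply right adjoints to the two composition factors $p\circ(\thg)$ and $(\thg)\circ p^\conv$; they say nothing about the middle factor $(\thg)\otimes g \colon \M(A,A)\to\M(A\otimes A,A\otimes A)$, and "standard Day convolution from the right liftings and extensions in $\M$" does not produce its right adjoint -- in this bicategorical setting there is no end formula to fall back on. This is precisely where the biclosedness of $\M$ (which is in the hypotheses but which you never actually use) enters in the paper: for fixed $f\in\M(A,B)$ one considers the composite $\M(C,D)\xrightarrow{f\otimes(\thg)}\M(A\otimes C,B\otimes D)\simeq\M(C,[A,B\otimes D])$, observes that it is pseudonatural in $C$ and hence given up to equivalence by composition with a $1$-cell $D\to[A,B\otimes D]$, and then obtains a right adjoint from the right liftings; since the comparison to $\M(C,[A,B\otimes D])$ is an equivalence, $f\otimes(\thg)$ (and dually $(\thg)\otimes g$) has a right adjoint, and the $\conv$-homs of $g,h\colon A\to A$ come out explicitly as $[g,p^\conv hp]_\ell$ and $[g,p^\conv hp]_r$. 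You should either supply this representability argument or cite a closedness result for convolution along a map monoidale that explicitly assumes the biclosed structure; as stated, the step would fail.
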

In the final clause, to say that $\M$ is \emph{biclosed} is to say
that the homomorphisms $A \otimes (\thg)$ and
$(\thg) \otimes A \colon \M \rightarrow \M$ have right biadjoints
$[A, \thg]_\ell$ and $[A, \thg]_r$; while to ask for \emph{right
  liftings} and \emph{right extensions} is to ask that, for each
object $C$ and $1$-cell $f \colon A \rightarrow B$ in $\M$, the
functors $f \comp (\thg) \colon \M(C,A) \rightarrow M(C,B)$ and
$(\thg) \comp f \colon M(B,C) \rightarrow \M(A,C)$ have right
adjoints.
\begin{proof}
  The constraint cells for the duoidal structure on $\M(A,A)$ are
  obtained from the constraints of the monoidal bicategory $\M$ and
  the units and counits of the adjunctions $j \dashv j^{\conv}$ and
  $p \dashv p^{\conv}$:
\begin{gather}
  \mu \colon p(1 \otimes 1)p^\conv \xrightarrow{\cong} pp^\conv \xrightarrow{\epsilon} 1 \qquad 
  \upsilon \colon jj^\conv \xrightarrow{\epsilon} 1\qquad 
  \gamma \colon jj^\conv \xrightarrow{j\eta j^\conv} j{j^\conv}j{j^\conv}\\
\xi \colon p(fh \otimes gk)p^\conv \xrightarrow{\cong} p(f \otimes g)(h \otimes k)p^\conv \xrightarrow{p(f\otimes g)\eta(h \otimes k)p^\conv} p(f \otimes g)p^{\conv}p(h \otimes k)p^\ast\rlap{ .}
\end{gather}
The duoidal axioms are straightforward consequences of the monoidal bicategory
axioms and the triangle identities for an adjunction.

When $\M$ is braided and $A$ is a braided monoidale, the convolution
monoidal structure $\conv$ on $\M(A,A)$ itself acquires a braiding,
and it is now easy to verify the coherence making $\M(A,A)$ into a
$\conv$-braided duoidal category. Finally, suppose that $\M$ is
biclosed with all right liftings and extensions. This immediately
implies $\comp$-biclosedness of $\M(A,A)$. To show $\conv$-biclosedness,
we first claim that the functors
\begin{equation}
  \otimes \colon \M(A,B) \times \M(C,D) \rightarrow \M(A \otimes C, B \otimes D)
\end{equation}
have right adjoints in each variable. To see this, suppose given
$f \in \M(A,B)$; then for each $C, D \in \M$ we have the composite
functor
\begin{equation}
\M(C,D) \xrightarrow{f \otimes (\thg)} \M(A \otimes C, B \otimes D) \xrightarrow{\overline{(\thg)}} \M(C, [A, B \otimes D])\rlap{ .}
\end{equation}
Since these functors are clearly pseudonatural in $C$, they must be
given to within equivalence by composition with a $1$-cell
$D \rightarrow [A, B \otimes D]$, and so---using the right
liftings---must have right adjoints. Since $\overline{(\thg)}$ is an
equivalence, we conclude that each $f \otimes (\thg)$ has a right
adjoint $[f, \thg]_r$, and dually each $(\thg) \otimes g$ has a right
adjoint $[g, \thg]_\ell$. Under these assumptions, it follows that the
duoidal structure on $\M(A,A)$ is $\conv$-biclosed: given
$g, h \colon A \rightarrow A$, the two internal homs are given by
$[g, p^\conv hp]_\ell \colon A \rightarrow A$ and
$[g, p^{\conv}hp]_r \colon A \rightarrow A$ respectively.
\end{proof}

\subsection{Bistrong profunctors}
\label{sec:bistrong-profunctors}
We now apply the construction of the previous section to the
bicategory $\cat{Prof}$ of \emph{profunctors}, whose objects are small
categories and whose $1$- and $2$-cells $\A \tor \B$ are cocontinuous
functors and transformations
$[\A^\op, \cat{Set}] \rightarrow [\B^\op, \cat{Set}]$. In practice, we
prefer the equivalent formulation which views these $1$- and $2$-cells
as functors and transformations
$\B^\op \times \A \rightarrow \cat{Set}$, with composition given by
coend. $\cat{Prof}$ is in fact a monoidal bicategory, with
$\A \otimes \B \defeq \A \times \B$ and with
$(N \colon \A \tor \B) \otimes (M \colon \C \tor \D)$ defined by
$(N \otimes M)\big((b,d),(a,c)\big) = N(b,a) \times M(d,c)$.

There is an identity-on-objects strong monoidal homomorphism
$(\thg)_\conv \colon \cat{Cat}\rightarrow \cat{Prof}$ which sends
$F \colon \A \rightarrow \B$ to the profunctor
$F_\conv \colon \A \tor \B$ with $F_\conv(b,a) = \B(b, Fa)$. Note that
$F_\conv$ has a right adjoint $F^\conv \colon \B \tor \A$ with
$F^\conv(a,b) = \B(Fa,b)$; it follows that each small monoidal
category $\A$ (= monoidale in $\cat{Cat}$) is sent by $(\thg)_\conv$
to a map monoidale in $\cat{Prof}$. Applying the construction of the
preceding section, we conclude that if $(\A, \otimes, i)$ is a small
monoidal category, then the functor category
$\cat{Prof}(\A, \A) = [\A^\op \times \A, \cat{Set}]$ has a duoidal
structure $(\conv, \J,\comp, \I)$ whose tensor products have the
following characterisations:
\begin{itemize}
\item Maps $M \comp N \rightarrow L$ classify families
  $M(c,b) \times N(b,a) \rightarrow L(c,a)$ that are bilinear in the
  sense of Definition~\ref{def:5};
\item Maps $\I \rightarrow M$ classify extranatural families
  $I \rightarrow M(u,u)$;
\item Maps $M \conv N \rightarrow L$ classify natural families
  $M(a,b) \times N(c,d) \rightarrow L(a \otimes c, b \otimes d)$;
\item Maps $\J \rightarrow M$ classify morphisms
  $I \rightarrow M(i,i)$.
\end{itemize}
The exchange maps $\xi$ of this duoidal structure are given as
follows. For $M,N,P,Q \in \cat{Prof}(\A, \A)$, the
universal natural families for $M \conv P$ and $N \conv Q$ yield
an arrow
\begin{equation}
  M(a,b) \times N(b,c) \times P(d,e) \times Q(e,f) \rightarrow (M \conv P)(a \otimes d, b \otimes e) \times (N \conv Q)(b \otimes e, c \otimes f)\rlap{ ;}
\end{equation}
now postcomposing with the universal dinatural family for
$(M \conv P) \comp (N \conv Q)$ yields
\begin{equation}
  M(a,b) \times N(b,c) \times P(d,e) \times Q(e,f) \rightarrow (M \conv P) \comp (N \conv Q)(a \otimes d, c \otimes f)\rlap{ ,}
\end{equation}
a family natural in $a,c,d$ and $f$ and bilinear in $b$ and $e$.
Using the classifying property of $\comp$, this corresponds to a
family
\begin{equation}
  (M \comp N)(a,c) \times (P\comp Q)(d,f) \rightarrow (M \conv P) \comp (N \conv Q)(a \otimes d, c \otimes f)
\end{equation}
natural in all variables; and so, by the classifying property of
$\conv$, to the required morphism $(M
\comp N) \conv (P \comp Q) \rightarrow (M \conv P) \comp (N \conv Q)$.
The other duoidal constraint maps are derived in a similar fashion.

Unless $\A \simeq 1$, the duoidal structure on $\cat{Prof}(\A, \A)$
will not be normal, and so we must apply the normalization of
Section~\ref{sec:norm-duoid-categ}; we now describe the effect this
has. To equip $M \in \cat{Prof}(\A, \A)$ with a left action
$\I \conv M \rightarrow M$ is to give a natural family of maps
$\I(a,b) \times M(c,d) \rightarrow M(a \otimes c, b \otimes d)$, or
equally, by the classifying property of $\I$, a family of maps
\begin{equation}
t_{ucd} \colon M(c,d) \rightarrow M(u \otimes c, u \otimes d)\label{eq:11}
\end{equation}
natural in $c$ and $d$ and dinatural in $u$, and satisfying the
associativity and unit axioms:
\begin{equation}
  \cd[@C-3em]{
    {M(c,d)} \ar[d]_-{t} \ar[rr]^-{t} & &
    {M((u \otimes v) \otimes c, (u \otimes v) \otimes d)} \ar[d]^-{M(\alpha^{-1}, \alpha)} & {\qquad} &
    M(c,d) \ar@{=}[rr] \ar[dr]_-{t} & & M(c,d)\rlap{ .} \\
    {M(v \otimes c, v \otimes d)} \ar[rr]^-{t} & {\qquad}&
    {M(u \otimes (v \otimes c), u \otimes (v \otimes d))} & & &
    {M(i \otimes c, i \otimes d)} \ar[ur]_-{M(\lambda^{-1}, \lambda)}
  }  
\end{equation}
Similarly, a right action $M \conv \I \rightarrow M$ involves a family
of maps
\begin{equation}
  \label{eq:12}
  \bar t_{cdv} \colon M(c,d) \rightarrow M(c \otimes v, d \otimes v)
\end{equation}
satisfying dual associativity and unit axioms; the further axiom
required for~\eqref{eq:11} and~\eqref{eq:12} to comprise an $\I$-bimodule
structure on $M$ is that 
\begin{equation}
  \cd[@R-1.5em]{
    & M(u \otimes c, u \otimes d) \ar[r]^-{\bar t} & 
    M((u \otimes c) \otimes v, (u \otimes d) \otimes v) \ar[dd]^{M(\alpha^{-1}, \alpha)}\\
    M(c,d) \ar[ur]^-{t} \ar[dr]_-{\bar t} \\ &
    M(c \otimes v, d \otimes v) \ar[r]^-{t} & 
    M(u \otimes (c \otimes v), u \otimes (d \otimes v))
  }
\end{equation}
should commute. Note that when $M = F_\conv$ for some $\V$-functor
$F$, giving an $\I$-bimodule structure reduces by the Yoneda lemma to
giving a compatible strength and costrength on $F$, in the sense of
Section~\ref{sec:bistr-endof} above; it thus seems reasonable for a
general $M$ to call this structure a \emph{bistrong profunctor} (note
that in~\cite{Pastro2008Doubles}, the name \emph{Tambara module} was
used for the same structure).

The normalization of the duoidal $\cat{Prof}(\A,\A)$ is thus the
normal duoidal category $\cat{BiStrProf}(\A, \A)$ of bistrong profunctors and
bistrength-preserving transformations, equipped with the two monoidal
structures $(\conv_\I, \I)$ given by the $\I$-bilinear quotient of the
convolution $\conv$, and $(\comp, \I)$ given by composition of
profunctors with the induced bistrength. Note that, by the 
observations of the preceding paragraph, the embedding
$(\thg)_\ast \colon \cat{End}(\A) \rightarrow \cat{Prof}(\A,\A)$ lifts
to an embedding
\begin{equation}
  \label{eq:40}
  (\thg)_\ast \colon \cat{BiStr}(\A,\A) \rightarrow\cat{BiStrProf}(\A,\A)
\end{equation}
which is strong monoidal for the respective composition
monoidal structures.

\subsection{Bistrong promonads}
\label{sec:prem-freyd-categ}
A $\comp$-monoid in the normal duoidal $\cat{BiStrProf}(\A,\A)$ is a
$\comp$-monoid in $\cat{Prof}(\A,\A)$---thus a monad on $\A$ in
$\cat{Prof}$---equipped with a bistrength compatible with the monad
structure. We call this structure a \emph{bistrong promonad}. Note
that, by strong monoidality of~\eqref{eq:40}, any bistrong monad on
$\A$ can be regarded as a bistrong promonad whose underlying
profunctor is representable; in the next section, we consider how our
commutativity notions specialise to this case.

As is well-known, a monad on $\A$ in $\cat{Prof}$ is the same thing
as an identity-on-objects functor $F \colon \A \rightarrow \M$: given
$F$, the corresponding monad is $F^\conv F_\conv \colon \A \tor \A$;
while given a promonad $M \colon \A \tor \A$, the corresponding
$\M$ has the same objects as $\A$ and hom-sets $\M(a,b) = M(a,b)$,
with the monad multiplication giving composition in $\M$ and the monad
unit giving the action of $F \colon \A \rightarrow \M$ on homs. In
these terms, giving a bistrength for $M$
amounts to giving natural families
$\M(a,b) \rightarrow \M(u \otimes a, u\otimes b)$ and
$\M(a,b) \rightarrow \M(a \otimes v, b \otimes v)$. Compatibility with
the monad structure of $M$ says that these functions provide the
actions on homs of functors $u \otimes (\thg)$ and
$(\thg) \otimes v \colon \M \rightarrow \M$ fitting into commuting
diagrams as on the left and right in
\begin{equation}\label{eq:13}
  \cd{
    {\A} \ar[r]^-{u \otimes (\thg)} \ar[d]_{F} &
    {\A} \ar[d]^{F} \\
    {\M} \ar[r]_-{u \otimes (\thg)} &
    {\M}
  } \qquad \qquad 
  \cd{
    {\A} \ar[r]^-{(\thg) \otimes v} \ar[d]_{F} &
    {\A} \ar[d]^{F} \\
    {\M} \ar[r]_-{(\thg) \otimes v} &
    {\M}\rlap{ .}
  }
\end{equation}
The unit axioms for the strength and costrength express that the maps
$F\lambda_a \colon i \otimes a \rightarrow a$ and
$F\rho_a \colon a \otimes i \rightarrow a$ are components of natural
transformations
$i \otimes (\thg) \Rightarrow 1$ and
$(\thg) \otimes i \Rightarrow 1 \colon \M \rightarrow \M$; the
associativity and bimodule axioms express that the maps
$F\alpha_{abc} \colon (a \otimes b) \otimes c \rightarrow a \otimes (b
\otimes c)$
are components of natural transformations
$(a \otimes b) \otimes \thg \Rightarrow a \otimes (b \otimes \thg)$
and
$(a \otimes \thg) \otimes b \Rightarrow a \otimes (\thg \otimes c)$
and
$(\thg \otimes b) \otimes c \Rightarrow \thg \otimes (b \otimes c)
\colon \M \rightarrow \M$.
The naturality of the strength and costrength maps in $a$ and $b$ is
automatic from the structure defined so far, while the extranaturality of
$t$ in $u$ and of $\bar t$ in $v$ express that, for each
$f \colon u \rightarrow v$ in $\A$ and $g \colon c \rightarrow d$ in
$\M$, the following diagrams commute:
\begin{equation}\label{eq:14}
  \cd{
    {u \otimes c} \ar[r]^-{u \otimes g} \ar[d]_{Ff \otimes c} &
    {u \otimes d} \ar[d]^{Ff \otimes d} \\
    {v \otimes c} \ar[r]_-{v \otimes g} &
    {v \otimes d}
  } \qquad \text{and} \qquad 
  \cd{
    {c \otimes u} \ar[r]^-{g \otimes u} \ar[d]_{c \otimes Ff} &
    {d \otimes u} \ar[d]^{d \otimes Ff} \\
    {c \otimes v} \ar[r]_-{g \otimes v} &
    {d \otimes v}\rlap{ .}
  }
\end{equation}

In the case where the monoidal structure on $\A$ is cartesian, and the
symmetry isomorphisms descend to $\M$ in an obvious sense, the above
structure was shown in~\cite[Theorem~6.1]{Jacobs2009Categorical} to be
equivalent to that of a \emph{Freyd}-category in the sense
of~\cite[Definition~4.1]{Levy2003Modelling}. We now explain how to
generalise this equivalence to the non-cartesian situation; we begin
by recalling the necessary definitions.

\begin{Defn}
  \label{def:4}
  A \emph{premonoidal} structure on a category $\M$ is given by the
  following data: (i) a unit object $i \in \M$; (ii) for each
  $u \in \M$ a functor $u \otimes (\thg) \colon \M \rightarrow \M$ and
  for each $v \in \M$ a functor
  $(\thg) \otimes v \colon \M \rightarrow \M$ such that the
  assignation on objects $u,v \mapsto u \otimes v$ on objects is
  unambiguously defined; and (iii), families of maps
  \begin{equation}
    \lambda_a \colon i \otimes a \rightarrow a \qquad 
    \rho_a \colon a \otimes i \rightarrow a \qquad 
    \alpha_{abc} \colon (a \otimes b) \otimes c \rightarrow a \otimes (b \otimes c)
  \end{equation}
  where the $\lambda$'s and $\rho$'s are natural, and the $\alpha$'s
  are natural in each variable separately. The data in (iii) must
  satisfy the usual triangle and pentagon axioms for a monoidal
  category; moreover, we require each map $\lambda_a$, $\rho_a$ and
  $\alpha_{abc}$ to be central. Here, a map
  $f \colon u \rightarrow v$ in $\M$ is called \emph{central} if, for
  every $g \colon c \rightarrow d$ in $\M$, the two squares
\begin{equation}
  \cd{
    {u \otimes c} \ar[r]^-{u \otimes g} \ar[d]_{f \otimes c} &
    {u \otimes d} \ar[d]^{f \otimes d} \\
    {v \otimes c} \ar[r]_-{v \otimes g} &
    {v \otimes d}
  } \qquad \text{and} \qquad 
  \cd{
    {c \otimes u} \ar[r]^-{g \otimes u} \ar[d]_{c \otimes f} &
    {d \otimes u} \ar[d]^{d \otimes f} \\
    {c \otimes v} \ar[r]_-{g \otimes v} &
    {d \otimes v}
  }
\end{equation}
commute. Note that monoidal categories are the same thing as
premonoidal categories in which every map is central.
If $\M$ and $\N$ are premonoidal categories, then a \emph{strict
  premonoidal functor} $\M \rightarrow \N$ is one which preserves the
unit, tensors and constraint morphisms on the nose, and which moreover
sends central maps to central maps.
\end{Defn}
\begin{Defn}
  \label{def:8}
  A generalised \emph{Freyd}-category is a triple $(\A, \M, F)$ where $\A$ is a
  small monoidal category, $\M$ is a
  small premonoidal category, and $F \colon \A \rightarrow \M$ is a bijective-on-objects strict
  premonoidal functor.
\end{Defn}
As indicated above, this generalises the definition of
\emph{Freyd}-category in~\cite{Levy2003Modelling} in two ways: we do
not require $\A$ be \emph{cartesian} monoidal, and do not assume that
$\M$ and $F$ are \emph{symmetric} premonoidal in the obvious sense.
The following result correspondingly
generalises~\cite[Theorem~6.1]{Jacobs2009Categorical}.

\begin{Prop}
  \label{prop:7}
  Let $\A$ be a small monoidal category. To give a bistrong promonad
  on $\A$ is equally to give a
  generalised \textit{Freyd}-category of the form $(\A, \M, F)$.
\end{Prop}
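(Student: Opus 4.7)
The approach is to verify that the unpacking immediately preceding the proposition already establishes a bijection between the data of a bistrong promonad and the data of a generalised Freyd-category $(\A, \M, F)$, and then to check that this bijection respects the coherence axioms on each side. Starting from a bistrong promonad $M \colon \A \tor \A$, I form the identity-on-objects functor $F \colon \A \rightarrow \M$ corresponding to the underlying promonad, with $\M(a,b) = M(a,b)$. The bistrength components $t_{u,c,d}$ and $\bar t_{c,d,v}$, being natural in $c,d$, assemble into endofunctors $u \otimes (\thg)$ and $(\thg) \otimes v$ of $\M$; the unit and associativity axioms for the monad, combined with compatibility of the strength with the unit, give precisely the commutativity of the squares~\eqref{eq:13} that $F$ preserves these operations on the nose.

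Next, I check the premonoidal coherences. The unit and associativity axioms for the $\I$-action on $M$ rewrite, by the Yoneda-style argument implicit in the text, as the statement that $F\lambda_a$, $F\rho_a$ and $F\alpha_{abc}$ are components of natural transformations between the premonoidal tensor functors just constructed; since $F$ is a functor, the triangle and pentagon identities in $\A$ descend directly to $\M$ for these constraint morphisms. The extranaturality of $t$ in $u$ and of $\bar t$ in $v$ are, comparing~\eqref{eq:14} to the centrality squares in Definition~\ref{def:4}, the statement that every morphism in the image of $F$ is central in $\M$; in particular the coherence morphisms, being such images, are central, so $\M$ is a premonoidal category and $F$ a strict premonoidal functor sending every (central) morphism of $\A$ to a central morphism of $\M$.

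For the reverse direction, given $(\A, \M, F)$, I define $M(a,b) \defeq \M(a,b)$, regarded as an $\A$-$\A$-profunctor via $F$; composition in $\M$ and the unit morphisms $F(\id_a)$ provide the promonad structure. The actions on homs of the premonoidal functors $u \otimes (\thg)$ and $(\thg) \otimes v$ supply the strength and costrength; naturality in $c,d$ is their functoriality, while extranaturality in $u$ or $v$ is precisely centrality of $F$-images, which holds by hypothesis. The premonoidal triangle and pentagon then give the unit and associativity axioms for the left and right $\I$-actions, and the $\I$-bimodule compatibility corresponds under the Yoneda lemma to the middle-four-interchange condition on centrality, which follows from the assertion that $F(\alpha)$ is central. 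It is routine that the two constructions are mutually inverse on the nose.

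The main obstacle is purely bookkeeping: carefully matching each axiom of Definition~\ref{def:4} to the corresponding naturality, dinaturality or axiom in the description of bistrong profunctors from Section~\ref{sec:bistrong-profunctors}, and in particular verifying that the extranaturality in the $\A$-indexed variables coincides axiomatically with centrality of $F$-images. Once that translation table is set up, everything reduces either to direct identification of diagrams or to functoriality of $F$.
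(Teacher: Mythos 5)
Your strategy is the paper's own: the proof is exactly the translation set up in the paragraphs preceding the proposition, with \eqref{eq:13} supplying the tensor functors $u \otimes (\thg)$ and $(\thg)\otimes v$ on $\M$, the unit/associativity/bimodule axioms for the $\I$-actions supplying the constraint data $F\lambda$, $F\rho$, $F\alpha$, and the extranaturality squares \eqref{eq:14} saying precisely that $F$-images are central; the forward direction as you present it matches the paper's proof.

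There is, however, a genuine flaw in two entries of your converse-direction dictionary, and since the whole argument is this dictionary, they matter. First, the unit and associativity axioms for the $\I$-actions are not delivered by the premonoidal triangle and pentagon: unwinding them, the unit axiom for the left action says that $F\lambda_a \colon i \otimes a \to a$ is natural as a transformation $i \otimes (\thg) \Rightarrow 1$ of endofunctors of $\M$, and the two associativity axioms say that $F\alpha_{u,v,\thg}$ and $F\alpha_{\thg,v,w}$ are natural in the $\M$-variable; these come from the clause of Definition~\ref{def:4} requiring $\lambda$, $\rho$ to be natural and $\alpha$ to be natural in each variable separately, whereas the triangle and pentagon are equations among the constraints alone and say nothing about general morphisms of $\M$. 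Second, the $\I$-bimodule compatibility axiom unwinds to naturality of $F\alpha_{u,\thg,v}$ in the middle variable, i.e.\ that it is a natural transformation $(u \otimes \thg)\otimes v \Rightarrow u \otimes (\thg \otimes v)$ of functors $\M \to \M$; this does not follow from centrality of $F\alpha$ (centrality of a map $f$ constrains the squares obtained by tensoring $f$ with arbitrary maps of $\M$, not the naturality of $\alpha$ in its own arguments), but is again part of the separate-variable naturality demanded in Definition~\ref{def:4}. With those two citations corrected---everything needed is already present in Definition~\ref{def:4}---your argument coincides with the paper's.
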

\begin{proof}
  Let $M$ be a promonad on $\A$, and let
  $F \colon \A \rightarrow \M$ be the corresponding
  identity-on-objects functor; we must show that a compatible
  bistrength is precisely what is needed to make $F$ into a generalised
  \emph{Freyd}-category. The premonoidal structure on $\M$ will have
  unit object $i$, with the data of~\eqref{eq:13} providing the
  functors $u \otimes (\thg)$ and $(\thg) \otimes v$ in $\M$, and the
  unit and associativity axioms for the strength and costrength
  yielding the required natural families of constraint maps
  $F\lambda_a$, $F\rho_a$ and $F\alpha_{abc}$. Now commutativity
  in~\eqref{eq:14} is the assertion that $F$ sends central maps (= all
  maps) in $\A$ to central maps in $\M$. In particular, each
  $F\lambda_a$, $F\rho_a$ and $F\alpha_{abc}$ is central in $\M$,
  whence $\M$ is premonoidal; it is now clear from the definitions
  that $F \colon \A \rightarrow \M$ is strict premonoidal.
\end{proof}

Note that every premonoidal $\M$ arises in this way for a suitable
$\A$; for example, we may take $\A$ to be the \emph{centre} $\Z(\M)$
of $\M$, comprising the same objects but only the central maps. The
centre of a premonoidal category is a monoidal category, and the
identity-on-objects inclusion $\Z(\M) \rightarrow \M$ is a strict
premonoidal functor.

We now interpret our basic commutativity notions in light of this
proposition. Let $M$ be a bistrong promonad, corresponding to the
generalised \emph{Freyd}-category $F \colon \A \rightarrow \M$, and let
$h \colon H \rightarrow M \leftarrow K \colon k$ be a cospan in
$\cat{BiStrProf}(\A,\A)$. By Lemma~\ref{lem:2} and the explicit
description of the tensor products in $\cat{Prof}(\A,\A)$, this cospan
is commuting just when the diagram
\begin{equation}
  \cd[@!C@C-6em@R-0.5em]{
    & \sh{l}{2.5em}\M(a,b) \times \M(c,d) \ar[r]^-{(\thg \otimes c) \times (b \otimes \thg)} & 
    \sh{r}{2.7em} \M(a \otimes c,b \otimes c) \times \M(b \otimes c, b \otimes d) \ar[dr]^-{\comp} \\
    \sh{r}{3em}H(a,b) \times K(c,d) \ar[ur]^-{h \times k} \ar[dr]_-{h \times k} & & &  \sh{l}{3em}\M(a \otimes c, b \otimes d)\\
    & \sh{l}{2.5em}\M(a,b) \times \M(c,d) \ar[r]^-{(a \otimes \thg) \times (\thg \otimes d)} & 
    \sh{r}{2.7em}\M(a \otimes c, a \otimes d) \times \M(a \otimes d, b \otimes d) \ar[ur]_-{\comp}
  }
\end{equation}
commutes for all $a,b,c,d \in \A$. This is equally to ask that, for
each $x \in H(a,b)$ and each $y \in K(c,d)$, the square
\begin{equation}
  \label{eq:42}
  \cd{
    a \otimes c \ar[r]^-{h(x) \otimes c} \ar[d]_-{a \otimes k(y)} & b \otimes c \ar[d]^-{b \otimes k(y)} \\
    a \otimes d \ar[r]_-{h(x) \otimes d} & b \otimes d
  }
\end{equation}
should commute in $\M$. In particular, $M$ is commutative just when
$F \colon \A \rightarrow \M$ is a strict monoidal functor between
monoidal categories. Moreover, the generalised \emph{Freyd}-category
$\A \rightarrow \Z(\M)$ given by factorising $F$ through the
centre of $\A$ is precisely the intersection of the left
and right centres (in the sense of Definition~\ref{def:19}) of the
$\comp$-monoid $M$.


Finally, we observe that $\cat{Prof}$ is a symmetric monoidal
bicategory; thus, by Proposition~\ref{prop:20}, if $\A$ is a symmetric
monoidal category, then the resultant duoidal structure on
$\cat{Prof}(\A, \A)$ is naturally $\conv$-braided. We thus have the
option of taking its braided normalization, the category of
\emph{strong profunctors} $\cat{StrProf}(\A, \A)$. In this
circumstance, to give a $\comp$-monoid is to give a
generalised \emph{Freyd}-category which is symmetric in the obvious sense.

\subsection{Commutative monads}
\label{sec:commutative-monads}
As observed in Section~\ref{sec:prem-freyd-categ} above, the category
of bistrong monad on a small monoidal category $\A$ can be identified
with the full subcategory of bistrong promonads on $\A$ whose
underlying endofunctor is representable. It is thus natural to ask
when the bistrong promonad $M = T_\ast$ induced by a bistrong monad
$T$ is commutative. In this circumstance, the profunctor
$M \colon\A\tor\A$ is given by $M(a,b)=\A(a,Tb)$, and so the
generalised $\emph{Freyd}$-category corresponding to $M$ under
Proposition~\ref{prop:7} is of the form $(\A,\A_T,F)$, where
$F\colon\A\to\A_T$ is the free functor into the Kleisli category of
the monad $T$. The commutativity of a cospan of profunctors
$h\colon H\to M\leftarrow K \colon k$, described in the previous
section as the commutativity of~\eqref{eq:42}, now becomes the
commutativity of the diagram:
\begin{equation}
  \label{eq:41}
  \xymatrixcolsep{1.2cm}
  \xymatrix{
    a\otimes c\ar[r]^-{h(x)\otimes c} \ar[d]_{a\otimes k(y)}&
    Tb\otimes c\ar[r]^{\bar t_{bc}}&
    T(b\otimes c)\ar[r]^-{T(b\otimes k(y))}&
    T(b\otimes Td)\ar[d]^-{Tt_{bd}}\\
    a\otimes Td\ar[d]_{t_{ad}}&&&
    T^2(b\otimes d)\ar[d]^{\mu}\\
    T(a\otimes d)\ar[r]^-{T(h(x)\otimes d)}&
    T(Tb\otimes d)\ar[r]^-{T\bar t_{ad}}&
    T^2(b\otimes d)\ar[r]^{\mu}&
    T(b\otimes d)
  }
\end{equation}
for all $x \in H(a,b)$ and $y \in K(c,d)$. When $H=L_\ast$ and
$K=R_\ast$ for endofunctors $L,R$ of $\A$, the morphisms $h$ and $k$
are necessarily induced by natural transformations
$f\colon L\Rightarrow T\Leftarrow R\colon g$, and an application of
Yoneda lemma shows that the commutativity of the cospan $(h,k)$ in
this circumstance reduces to the commutativity of the following
diagrams in $\A$:
\begin{equation}
  \label{eq:43}
  \xymatrixcolsep{1.2cm}
  \xymatrix{
    Lb\otimes Rd\ar[r]^{f_b\otimes Rd} \ar[d]_{Lb\otimes g_d}&
    Tb\otimes Rd\ar[r]^{\bar t_{bRd}}&
    T(b\otimes Rd)\ar[r]^-{T(b\otimes g_d)}&
    T(b\otimes Td)\ar[d]^{Tt_{bd}}\\
    Lb\otimes Td\ar[d]_{t_{Lbd}}&&&
    T^2(b\otimes d)\ar[d]^{\mu}\\
    T(Lb\otimes d)\ar[r]^-{T(f_b\otimes d)}&
    T(Tb\otimes d)\ar[r]^-{T\bar t_{bd}}&
    T^2(b\otimes d)\ar[r]^{\mu}&
    T(b\otimes d)\rlap{ .}
  }
\end{equation}
We have thus proved:
\begin{Prop}\label{prop:23}
  If $T$ is a bistrong monad on $\A$, then a cospan
  $L\Rightarrow T\Leftarrow R$ in $\cat{End}(\A)$ commutes in the
  sense of~\cite{Hyland2007Combining} precisely when the induced
  cospan $L_\ast \Rightarrow T_\ast \Leftarrow R_\ast$ commutes in the
  normal duoidal $\cat{BiStrProf}(\A,\A)$. A bistrong monad $T$ on
  $\A$ is commutative in the sense of~\cite{Kock1970Monads} if and
  only it is so as a bistrong promonad $T_\ast$ on
  $\A$.
\end{Prop}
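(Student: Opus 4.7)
The plan is to capitalise on the preceding paragraph's reduction: it has already been shown that the commutativity of a cospan $h \colon L_\ast \rightarrow T_\ast \leftarrow R_\ast \colon k$ in $\cat{BiStrProf}(\A,\A)$, with all three profunctors representable, unwinds via the Yoneda lemma to the commutativity of diagram~\eqref{eq:43} for the strong natural transformations $f \colon L \Rightarrow T \Leftarrow R \colon g$ which correspond to $h$ and $k$. What remains is essentially to match~\eqref{eq:43} against the definitions in the cited references.

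For the first assertion, I would inspect~\cite{Hyland2007Combining} and verify that the diagram~\eqref{eq:43} is precisely the condition given there for a cospan of strong natural transformations into a strong monad $T$ to commute; this is in turn what underlies the universal property defining their tensor product of $\kappa$-accessible strong monads. Granting this identification, the first assertion is immediate from the derivation of~\eqref{eq:43} already carried out just before the statement.

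For the second assertion, I would specialise the first assertion to $L = R = T$ and $f = g = 1_T$. By Definition~\ref{def:20}, the bistrong promonad $T_\ast$ is commutative exactly when the identity cospan on $T_\ast$ commutes; the first assertion then translates this into the commutativity of the cospan $1_T \colon T \Rightarrow T \Leftarrow T \colon 1_T$ in $\cat{End}(\A)$. Substituting $L = R = T$ and $f = g = 1_T$ into~\eqref{eq:43} collapses the maps $f_b \otimes 1$, $1 \otimes g_d$, $T(1 \otimes g_d)$ and $T(f_b \otimes 1)$ to identities, and the diagram reduces to Kock's classical identity $\mu \cdot Tt \cdot \bar t = \mu \cdot T\bar t \cdot t$ of~\cite{Kock1970Monads} expressing that $T$ is commutative as a bistrong monad.

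The main obstacle, such as it is, lies entirely in that Yoneda reduction performed before the statement: it implicitly uses the dinaturality clauses built into the notion of bistrong profunctor to identify maps of representable bistrong profunctors with strong natural transformations of the representing endofunctors. Given the explicit description of the bistrong structure on $T_\ast$ induced by that on $T$ through the strong monoidal embedding~\eqref{eq:40}, this correspondence is routine to verify, and it carries commuting cospans of bistrong profunctors precisely to commuting cospans in the sense of~\cite{Hyland2007Combining}.
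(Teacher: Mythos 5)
Your proposal is correct and follows essentially the same route as the paper: the paper's own proof consists precisely of the Yoneda reduction to diagram~\eqref{eq:43} carried out in the paragraph preceding the statement, an identification of~\eqref{eq:43} with the commutation condition of~\cite{Hyland2007Combining}, and the observation that the identity cospan case ($L=R=T$, $f=g=1_T$) collapses~\eqref{eq:43} to Kock's condition $\mu\cdot Tt\cdot\bar t=\mu\cdot T\bar t\cdot t$. Your remark that the reduction hinges on matching bistrength-preserving maps of representable profunctors with strong transformations of the representing endofunctors is exactly the point the paper relies on via the embedding~\eqref{eq:40} and the Yoneda observation of Section~\ref{sec:bistrong-profunctors}.
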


As in Section~\ref{sec:comm-tens-prod-1}, we may reconstruct from this
the classical result that a bistrong monad $T$ is commutative precisely
when it is a monoidal monad. Indeed, a duoid structure on an object
$T_\ast \in \cat{Prof}(\A, \A)$ is easily seen to be the same as a
monoidal monad structure on $T$; on the other hand, a commutative
monoid structure on $T_\ast$ in the normalization
$\cat{BiStProf}(\A,\A)$ is,  by the proposition
above, a commutative monad on $\A$.

Finally, in this section, we observe that $\cat{Prof}$ is biclosed
(even autonomous) as a monoidal bicategory, and admits all right
extensions and right liftings; so by Proposition~\ref{prop:20}, the
duoidal structure on $\cat{Prof}(\A, \A)$ is $\conv$- and
$\comp$-biclosed. It follows that $\cat{BiStProf}(\A, \A)$ is also
$\conv$ and $\comp$-biclosed; moreover, being the category of algebras
for a cocontinuous monad on
$\cat{Prof}(\A, \A) = [\A^\op \times \A, \cat{Set}]$, it is locally
presentable (indeed, a presheaf category). Applying
Proposition~\ref{prop:22} now shows that the hypotheses of
Proposition~\ref{prop:17} are satisfied by $\cat{BiStProf}(\A, \A)$,
so that we have a good notion of commuting tensor product for bistrong
promonads, or equally, for generalised \emph{Freyd}-categories on
$\A$. In the functional programming literature, strong monads encode
notions of computational side-effect, and an important role is played
by the tensorial combination of two such monads.
\emph{Freyd}-categories were introduced as a generalisation of strong
monads (as implied by Proposition~\ref{prop:23}) and so this tensor
product of generalised \emph{Freyd}-categories is both extremely
natural and of potential interest to computer scientists.

\section{Example: sesquicategories}
\label{sec:exampl-sesq}

Our final example deals with the \emph{sesquicategories}
of~\cite{Street1996Categorical}, which, as explained in the
introduction, are ``$2$-categories without middle-four interchange'';
we will exhibit sesquicategories with fixed underlying category as
$\circ$-monoids in a suitable duoidal category, which is obtained by
normalizing the duoidal category of \emph{derivation schemes}
defined in~\cite[Section~6]{Batanin2012Centers}.

\subsection{Derivation schemes}
\label{sec:derivation-schemes}
Suppose that $(\E, \otimes, I)$ is a monoidal category with pullbacks,
for which the functor $\otimes \colon \E \times \E \rightarrow \E$
preserves pullbacks. In this circumstance, the bicategory
$\cat{Span}(\E)$ of spans in $\E$ becomes a monoidal bicategory, whose
tensor product is that of $\E$ on objects, and on $1$-cells is given
by pointwise tensor product of spans; moreover, the
identity-on-objects inclusion
$(\thg)_\ast \colon \E \rightarrow \cat{Span}(\E)$ sending
$f \colon X \rightarrow Y$ to
$1 \colon X \leftarrow X \rightarrow Y \colon f$ then becomes a strong
monoidal homomorphism. Each morphism $f_\ast$ has a right adjoint
$f^\ast = (f \colon Y \leftarrow X \rightarrow X \colon 1)$ in
$\cat{Span}(\E)$, and so each monoid $(A, j, p)$ in $\E$ is sent by
$(\thg)_\ast$ to a map monoidale in $\cat{Span}(\E)$. It follows by
the construction of Section~\ref{sec:duoid-categ-from} that, for each
monoid $(A,j,p)$ in $\E$, the category
$\cat{Span}(\E)(A, A) = \E / A \times A$ acquires a duoidal structure.

Now fix a set $X_0$; we apply the preceding general considerations to
the monoidal category $\E = \cat{Set}/X_0 \times X_0$ with
(pullback-preserving) tensor product given by composition of spans. To
give a monoid in $\E$ is to give a small category
$\mathbb X = X_1 \rightrightarrows X_0$, and the above construction
now derives from this a duoidal structure on the category
$\cat{Span}(\cat{Set}/X_0 \times X_0)(X_1, X_1) = \cat{Set}/ X_1
\times_{X_0 \times X_0} X_1$.
In the terminology of~\cite{Batanin2012Centers}, this duoidal category
would be called $\cat{Sp}_2(\mathbb X, \cat{Set})$; however, we
follow~\cite{Street1996Categorical,Street2012Monoidal} in referring to
it as the duoidal category $\cat{Ds}(\mathbb X)$ of \emph{derivation
  schemes} on $\mathbb X$. 

Explicitly, an object of $A \in \cat{Ds}(\mathbb X)$ amounts to a
function assigning to each parallel pair of morphisms
$f, g \colon x \rightrightarrows y$ in $\mathbb X$ a set of
``$2$-cells'' $A_{f,g}$, while a morphism
$\alpha \colon A \rightarrow B$ in $\cat{Ds}(\mathbb X)$ is a
collection of functions $\alpha_{f,g} \colon A_{f,g} \to B_{f,g}$. As
for the duoidal structure $(\ast, \J, \comp, \I)$ on
$\cat{Ds}(\mathbb X)$, the unit objects are given by
 \begin{equation}
   \label{eq:39}
   \J_{f,g} =
   \begin{cases}
     1 & \text{ when $f = g = 1_a$;} \\ \emptyset & \text{otherwise,}
   \end{cases}
   \qquad 
   \I_{f,g} =
   \begin{cases}
     1 & \text{ when $f = g$;} \\ \emptyset & \text{otherwise,}
   \end{cases}
 \end{equation}
 while the binary tensors $\comp$ and $\ast$ are characterised by the
 properties that:
 \begin{itemize}
 \item Maps $A \ast B \rightarrow C$ classify
   families of functions
   $A_{f_1, g_1} \times B_{f_2, g_2} \rightarrow C_{f_1f_2, g_1g_2}$
   indexed by all $f_1, f_2 \colon x \rightarrow y$ and
   $g_1, g_2 \colon y \rightarrow z$ in $\mathbb X$; and
\vskip0.25\baselineskip
 \item Maps $A \circ B \rightarrow C$ classify
   families of functions
   $A_{g, h} \times B_{f, g} \rightarrow C_{f,h}$
   indexed by all $f, g, h \colon x \rightarrow y$ in $\mathbb X$.
 \end{itemize}
 It follows that $\conv$-monoid and $\comp$-monoid structures on a
 derivation scheme $A$ endow its $2$-cells with horizontal and
 vertical composition operations respectively, and that a duoid
 structure---involving compatible vertical and horizontal
 composition---is an enrichment of $\mathbb X$ to a $2$-category with
 $2$-cells given by $A$.

 \subsection{Whiskering schemes and sesquicategories}
\label{sec:whisk-schem-sesq}
As is clear from~\eqref{eq:39}, the duoidal category
$\cat{Ds}(\mathbb X)$ is not normal, so that in order for our theory
of commutativity to be applicable we must first pass to its
normalization as in Section~\ref{sec:norm-duoid-categ}. From the above
classification of the tensor products on $\cat{Ds}(\mathbb X)$, we see
that to equip a derivation scheme $A$ with an
$\I \conv (\thg) \conv \I$-algebra structure is to give functions
$h \cdot (\thg) \colon A_{f,g} \rightarrow A_{hf,hg}$ and
$(\thg) \cdot k \colon A_{f,g} \rightarrow A_{fk,gk}$ for all
$k \colon x \rightarrow y$ and $f,g \colon y \rightarrow z$ and
$h \colon z \rightarrow w$ in $\mathbb X$, subject to the evident
associativity and unitality axioms; in other words, to endow the
$2$-cells in $A$ with a notion of whiskering. It seems reasonable to
call this structure a \emph{whiskering scheme}; note that it amounts
to a lifting of
$\cat{Hom}_\mathbb X \colon \mathbb X^\op \times \mathbb X \rightarrow
\cat{Set}$
through the set-of-objects functor $\cat{Gph} \rightarrow \cat{Set}$.

The category $\cat{Ws}(\mathbb X)$ of whiskering schemes is thus the
normalization of the duoidal $\cat{Ds}(\mathbb X)$ when equipped with
the lifted monoidal structure $(\comp, \I)$ and the bilinear quotient
$(\ast_\I, \I)$ of $(\ast, \J)$. To give a $\comp$-monoid structure on
a whiskering scheme is to endow it with a vertical composition of
$2$-cells which is compatible with whiskering, or in other words, to
make it into a sesquicategory; we have thus shown that the category of
$\comp$-monoids in $\cat{Ws}(\mathbb X)$ is the category
$\cat{Sesq}(\mathbb X)$ of sesquicategory structures on $\mathbb X$.

\subsection{Commutativity in sesquicategories}
\label{sec:comm-sesq}
We now interpret the basic notions of our general theory in the
context of the normal duoidal $\cat{Ws}(\mathbb X)$. Let $C$ be a
$\comp$-monoid in $\cat{Ws}(\mathbb X)$, corresponding to the
sesquicategory $\mathbb C$ extending $\mathbb X$, and let
$\alpha \colon A \rightarrow C \leftarrow B \colon \beta$ be a cospan
in $\cat{Ws}(\mathbb X)$. By Lemma~\ref{lem:2} and our explicit
description of the tensor products in $\cat{Ds}(\mathbb X)$, we see
that the cospan $(f,g)$ is commuting just when, for every
$f, g \colon x \rightarrow y$ and $h, k \colon y \rightarrow z$ in
$\mathbb X$, the diagram
\begin{equation}
  \cd[@!C@C-1.7em@R-1.5em]{
    & \sh{l}{1em}C_{h,k} \times C_{f,g} \ar[r]^-{(\thg \cdot g) \times (h \cdot \thg)} & 
    \sh{r}{1em}C_{hg, kg} \times C_{hf, hg} \ar[dr]^-{\comp} \\
    A_{h,k} \times B_{f,g} \ar[ur]^-{\alpha \times \beta} \ar[dr]_-{(\beta \times \alpha).c\ \ \ \ \ } & & & C_{hf, kg}\\
    & \sh{l}{1em}C_{f,g} \times C_{h,k} \ar[r]^-{(k \cdot \thg) \times (\thg \cdot f)} & 
    \sh{r}{1em}C_{kf, kg} \times C_{hf, kf} \ar[ur]_-{\comp}
  }
\end{equation}
commutes; which is equally to ask that, for each $x \in A_{h,k}$ and
$y \in B_{f,g}$, the interchange axiom holds for the composable pair
of $2$-cells 
\begin{equation}
  \cd[@C+2em]{
    x \ar@/^1em/[r]^-{f} \ar@/_1em/[r]_-{g} \dtwocell{r}{\alpha(x)} &
    y \ar@/^1em/[r]^-{h} \ar@/_1em/[r]_-{k} \dtwocell{r}{\beta(y)} &
    z
  }
\end{equation}
in the sesquicategory $\mathbb C$. In particular, we conclude that $C$
is a commutative $\circ$-monoid just when the sesquicategory
$\mathbb C$ is in fact a $2$-category.

\appendix
\section{$\V\text-\cat{Cat}$ is locally presentable when $\V$ is so}
In~\cite[Theorem~4.5]{Kelly2001V-Cat}, Lack and Kelly show that if the
locally presentable category $\V$ bears a monoidal biclosed structure,
then $\V\text-\cat{Cat}$ is locally presentable and moreover free
$\V$-categories exist. In this appendix, we generalise this result by
weakening the assumption of closedness.

\begin{Prop}
  \label{prop:22}
  If $\V$ is a locally presentable category equipped with a monoidal
  structure $(\comp, \I)$ for which each functor $A \comp (\thg)$ and
  $(\thg) \comp A$ is accessible, then $\V\text-\cat{Cat}$ is locally
  presentable and free $\V$-categories exist.
\end{Prop}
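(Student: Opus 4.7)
The plan is to exhibit $\V\text-\cat{Cat}$ as the category of algebras for an accessible monad on the locally presentable category $\V\text-\cat{Gph}$, and then invoke the standard result that categories of algebras for accessible monads on locally presentable categories are themselves locally presentable; monadicity will simultaneously yield the left adjoint to $U \colon \V\text-\cat{Cat} \rightarrow \V\text-\cat{Gph}$ providing free $\V$-categories.

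First, one verifies that $\V\text-\cat{Gph}$ is locally presentable. This can be seen by presenting it as the Grothendieck construction for the pseudofunctor $\cat{Set} \rightarrow \cat{CAT}$ sending $X$ to $\V^{X \times X}$ with reindexing along $f \colon X \rightarrow Y$ given by composition with $f \times f$; each fibre is locally presentable since $\V$ is, the reindexing functors are cocontinuous, and the total category then inherits local presentability from standard results on Grothendieck constructions of locally presentable categories. Next, define the free $\V$-category monad $T$ on $\V\text-\cat{Gph}$ by the usual path-space formula: for a $\V$-graph $A$ with object-set $X$, $TA$ has the same objects and hom-objects
\[
(TA)(x,y) \;=\; \coprod_{n \geq 0} \;\; \coprod_{\substack{x_0, \dots, x_n \in X \\ x_0 = x,\; x_n = y}} A(x_{n-1}, x_n) \comp \cdots \comp A(x_0, x_1),
\]
where the $n = 0$ summand contributes $\I$ when $x = y$ and is empty otherwise, with composition in $TA$ given by concatenation of paths and unit by the coprojection of the $n=0$ summand.

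The key step is to verify that $T$ is an accessible endofunctor of $\V\text-\cat{Gph}$. Since $\V$ is locally presentable and each of the functors $A \comp (\thg)$ and $(\thg) \comp A$ is accessible, there is a single regular cardinal $\kappa$ for which $\comp \colon \V \times \V \rightarrow \V$ is $\kappa$-accessible in each variable, and hence, by a standard fact about accessible bifunctors between locally presentable categories, is $\kappa$-accessible as a functor of both variables jointly; iterating, every $n$-fold tensor is $\kappa$-accessible. Because $\kappa$-accessibility of functors is stable under small coproducts in the codomain (and $\kappa$-filtered colimits commute with small coproducts), the formula above is $\kappa$-accessible on the fibre $\V^{X \times X}$ for each fixed $X$; compatibility with the Grothendieck-fibrational structure, together with the fact that filtered colimits in $\V\text-\cat{Gph}$ are computed by taking the filtered colimit of object-sets in $\cat{Set}$ and then working fibrewise, promotes this to accessibility of $T$ on all of $\V\text-\cat{Gph}$.

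Finally, since $T$ is an accessible monad on the locally presentable $\V\text-\cat{Gph}$, its category of algebras, which is easily identified with $\V\text-\cat{Cat}$, is locally presentable, and the forgetful functor $U \colon \V\text-\cat{Cat} \rightarrow \V\text-\cat{Gph}$ admits a left adjoint $F$, so that free $\V$-categories exist. The main obstacle in the execution is the bookkeeping needed to lift fibrewise accessibility of $T$ to honest accessibility over all of $\V\text-\cat{Gph}$ at once; this hinges on having a single $\kappa$ that simultaneously controls accessibility of $\comp$ in each variable and of the coproducts involved, and on the observation that both pieces of structure behave cocontinuously under the reindexing functors coming from the fibration $\V\text-\cat{Gph} \rightarrow \cat{Set}$.
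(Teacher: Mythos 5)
Your overall strategy (realise $\V\text-\cat{Cat}$ as algebras for an accessible monad on the locally presentable $\V\text-\cat{Gph}$) is reasonable in spirit, but the execution has a genuine gap at its central step: under the stated hypotheses, the ``usual path-space formula'' does \emph{not} define the free $\V$-category monad, and indeed need not define a monad at all. To give the composition of $TA$ by concatenation you must produce maps $(TA)(y,z) \comp (TA)(x,y) \rightarrow (TA)(x,z)$ whose domain is a tensor of two coproducts; concatenation only gives you maps out of the individual summands $\bigl(A(x_{n-1},x_n)\comp\cdots\bigr) \comp \bigl(A(y_{m-1},y_m)\comp\cdots\bigr)$, i.e.\ a map out of $\coprod_{p,q} X_p \comp Y_q$, and the canonical comparison $\coprod_{p,q} X_p \comp Y_q \rightarrow \bigl(\coprod_p X_p\bigr) \comp \bigl(\coprod_q Y_q\bigr)$ points the wrong way. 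Making the formula work requires $\comp$ to preserve small coproducts in each variable (the situation of Kelly--Lack, where $\V$ is biclosed), which is precisely the hypothesis this proposition is designed to drop: accessibility of $A \comp (\thg)$ and $(\thg) \comp A$ says only that they preserve $\kappa$-filtered colimits. The paper's own motivating example makes this concrete: for the substitution tensor on $\F = [\mathbb F, \cat{Set}]$, the functor $A \comp (\thg)$ is finitary but does not preserve coproducts, so your $TA$ carries no concatenation structure and has no claim to the universal property of the free $\V$-category; the same obstruction blocks the multiplication $TTA \rightarrow TA$ and the identification of $T$-algebras with $\V$-categories.

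The paper avoids any explicit formula. It works fibrewise over a fixed object-set $X$, exhibits $\V\text-\cat{Cat}_X$ as a bilimit (iterated inserters and a joint inserter encoding the associativity and unit axioms) of accessible categories and accessible functors built from $\V\text-\cat{Gph}_X$ and the tensor, invokes the Makkai--Par\'e closure of accessible categories under bilimits and under the Grothendieck construction to conclude that $U \colon \V\text-\cat{Cat} \rightarrow \V\text-\cat{Gph}$ is an accessible functor between accessible (hence, being complete, locally presentable) categories, and then obtains the free $\V$-category functor from the adjoint functor theorem, since $U$ is continuous and accessible. If you want to salvage a monad-theoretic version of your argument, you would need to replace the geometric-series formula by a transfinite free-algebra construction (Kelly's free monad on an accessible endofunctor of $\V\text-\cat{Gph}$) together with a separate monadicity argument; the parts of your proposal concerning local presentability of $\V\text-\cat{Gph}$ and closure of accessible functors under coproducts are fine, but they do not repair the missing construction of $F$.
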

\begin{proof}
  Fixing a set $X$, write $\V\text-\cat{Cat}_X$ for the fibre of
  $\ob \colon \V\text-\cat{Cat} \rightarrow \cat{Set}$ over $X$, and
  similarly for $\V\text-\cat{Gph}_X$. We first show that each
  $U_X \colon \V\text-\cat{Cat}_X \rightarrow \V\text-\cat{Gph}_X$ is
  an accessible functor between accessible categories. To this
  end, define $P^0 \in \V\text-\cat{Gph}_X$ by $P^0(x,x) = I$ and
  $P^0(x,y) = 0$ for $x \neq y$, let
  $P^2 \colon (\V\text-\cat{Gph}_X)^2 \rightarrow \V\text-\cat{Gph}_X$
  and
  $P^3 \colon (\V\text-\cat{Gph}_X)^3 \rightarrow \V\text-\cat{Gph}_X$
  be defined by
  \begin{equation}
    \begin{aligned}
    P^2(A,B)(x,y) &= \sum_{z \in X} A(z,y) \comp B(x,z)\\
    P^3(A,B,C)(x,y) &= \sum_{z,w \in X} A(w,y) \comp B(z,w) \comp C(x,z)\rlap{ ,}
    \end{aligned}
  \end{equation}
  and for any $A \in \V\text-\cat{Gph}_X$, let
  \begin{equation}
    \begin{aligned}
    \phi^\ell &\colon P^3(A, A, A) \rightarrow P^2(P^2(A,A),A) & \qquad \phi^r &\colon P^3(A, A, A) \rightarrow P^2(A, P^2(A,A))\\
    \psi^\ell &\colon A \rightarrow P^2(P^0, A) & \psi^r &\colon A \rightarrow P^2(A, P^0)
    \end{aligned}
  \end{equation}
  denote the canonical comparison maps induced by the universal
  property of coproduct. In these terms, to endow $A \in \V\text-\cat{Gph}_X$
  with $\V$-category structure is to give
  maps $e \colon P^0 \rightarrow A$ and $m \colon
  P^2(A,A) \rightarrow A$ rendering commutative the diagrams:
  \begin{equation}
    \cd[@!C@C-4.5em@-0.5em]{
      & \sh{l}{2.5em}P^2(P^2(A,A),A) \ar[r]^-{P^2(m, 1)} &
      \sh{r}{1.5em}P^2(A,A) \ar[dr]^-{m} \\
      P^3(A, A, A) \ar[ur]^-{\phi^\ell} \ar[dr]_-{\phi^r} & & & A \\
      & \sh{l}{2.5em}P^2(A, P^2(A,A)) \ar[r]_-{P^2(1, m)} &
      \sh{r}{1.5em}P^2(A,A) \ar[ur]_-{m}
    } \quad 
    \cd{
      P^2(P^0, A) \ar[d]^-{P^2(e,1)} & A \ar@{=}[d] \ar[l]_-{\psi^\ell} \ar[r]^-{\psi^r} & P^2(A, P^0) \ar[d]_-{P^2(1,e)} \\
      P^2(A, A) \ar[r]_-{m} & A & P^2(A,A)\rlap{ .} \ar[l]^-{m}
    }
  \end{equation}
  Consequently $\V\text-\cat{Cat}_X$ can be constructed from
  $\V\text-\cat{Gph}_X$ using bilimits in $\cat{CAT}$: first, one
  takes the inserter $J \colon \E \rightarrow \V\text-\cat{Gph}_X$ of
  the endofunctors $A \mapsto P^2(A,A)$ and $A \mapsto A$ of
  $\V\text-\cat{Gph}_X$, and then the inserter
  $K \colon \G \rightarrow \E$ of the functors $\Delta P^0$ and
  $J \colon \E \rightarrow \V\text-\cat{Gph}_X$. An object of $\G$
  comprises $A \in \V\text-\cat{Gph}_X$ equipped with maps
  $e \colon P^0 \rightarrow A$ and $m \colon P^2(A,A) \rightarrow A$,
  and now the inclusion $L \colon \V\text-\cat{Cat}_X \rightarrow \G$
  exhibits $\V\text-\cat{Cat}_X$ as the joint inserter of the three
  parallel pairs of $2$-cells in $\cat{CAT}(\G , \V\text-\cat{Gph}_X)$
  corresponding to the three axioms displayed above. Now,
  $\V\text-\cat{Gph}_X = \V^{X \times X}$ is accessible since $\V$ is,
  while $P^0$, $P^2$, $P^3$ are accessible since the tensor product of
  $\V$ is so; since by~\cite[Theorem~5.1.6]{Makkai1989Accessible}, the
  $2$-category $\cat{ACC}$ of accessible categories and accessible
  functors is closed under bilimits in $\cat{CAT}$, we conclude that
  $U_X \colon \V\text-\cat{Cat}_X \rightarrow \V\text-\cat{Gph}_X$ is
  an accessible functor between accessible categories as required.

  Now for any map $f \colon X \rightarrow Y$ in $\cat{Set}$, the reindexing
  functor $\V^{f\times f} \colon \V^{Y \times Y} \rightarrow \V^{X
    \times X}$ is easily seen to lift to a functor
  \begin{equation}
    \cd{
      {\V\text-\cat{Cat}_Y} \ar@{-->}[r]^-{f^\ast} \ar[d]_{U_Y} &
      {\V\text-\cat{Cat}_X} \ar[d]^{U_X} \\
      {\V\text-\cat{Gph}_Y} \ar[r]_-{\V^{f \times f}} &
      {\V\text-\cat{Gph}_X}\rlap{ .}
    }
  \end{equation}
  Since $\V^{f \times f}$ has a right adjoint (given by right Kan
  extension), it preserves all colimits and is in particular
  accessible. On the other hand, composing $f^\ast$ with the
  bilimiting cone that defines $\V\text-\cat{Cat}_X$ over
  $\V\text-\cat{Gph}_X$ gives a cone of accessible categories and
  functors; whence by~\cite[Theorem~5.1.6]{Makkai1989Accessible}
  $f^\ast$ is also accessible. Consequently, the indexed categories
  $\V\text-\cat{Cat}_{(\thg)}$ and
  $\V\text-\cat{Gph}_{(\thg)} \colon \cat{Set}^\op \rightarrow
  \cat{CAT}$
  and the indexed transformation $U$ between them all take values in
  $\cat{ACC}$, so that by~\cite[Theorem~5.4]{Makkai1989Accessible},
  the induced functor
  $U \colon \V\text-\cat{Cat} \rightarrow \V\text-\cat{Gph}$ between
  total categories is an accessible functor between accessible
  categories. Now since $\V\text-\cat{Cat}$ and $\V\text-\cat{Gph}$ are
  also complete, they must be locally presentable; since $U$ is also
  continuous, it must by~\cite[Satz~14.6]{Gabriel1971Lokal} be a right
  adjoint as required.
\end{proof}

\bibliographystyle{acm}
\bibliography{bibdata}

\end{document}